\documentclass{article}%
\usepackage{amsfonts}
\usepackage{amsmath}%
\setcounter{MaxMatrixCols}{30}%
\usepackage{amssymb}%
\usepackage{graphicx}
\newtheorem{theorem}{Theorem}

\newtheorem{corollary}[theorem]{Corollary}

\newtheorem{definition}[theorem]{Definition}

\newtheorem{lemma}[theorem]{Lemma}

\newtheorem{proposition}[theorem]{Proposition}

\newenvironment{proof}[1][Proof]{\noindent\textbf{#1.} }{\ \rule{0.5em}{0.5em}}
\begin{document}

\title{A bracket polynomial for graphs. II.\\Links, Euler circuits and marked graphs }
\author{Lorenzo Traldi\\Lafayette College\\Easton, Pennsylvania 18042}
\date{}
\maketitle

\begin{abstract}
Let $D$ be an oriented classical or virtual link diagram with directed
universe $\vec{U}$. Let $C$ denote a set of directed Euler circuits, one in
each connected component of $U$. There is then an associated looped
interlacement graph $\mathcal{L}(D,C)$ whose construction involves very little
geometric information about the way $D$ is drawn in the plane; consequently
$\mathcal{L}(D,C)$ is different from other combinatorial structures associated
with classical link diagrams, like the checkerboard graph, which can be
difficult to extend to arbitrary virtual links. $\mathcal{L}(D,C)$ is
determined by three things: the structure of $\vec{U}$ as a 2-in, 2-out
digraph, the distinction between crossings that make a positive contribution
to the writhe and those that make a negative contribution, and the
relationship between $C$ and the directed circuits in $\vec{U}$ arising from
the link components; this relationship is indicated by marking the vertices
where $C$ does not follow the incident link component(s). We introduce a
bracket polynomial for arbitrary marked graphs, defined using either a formula
involving matrix nullities or a recursion involving the local complement and
pivot operations; the marked-graph bracket of $\mathcal{L}(D,C)$ is the same
as the Kauffman bracket of $D$. This provides a unified combinatorial
description of the Jones polynomial that applies seamlessly to both classical
and non-classical virtual links.

\bigskip

\textit{Keywords. }graph, virtual link, Kauffman bracket, Jones polynomial,
Euler circuit, circuit partition, trip matrix, interlacement, Reidemeister move

\bigskip

\textit{2000 Mathematics Subject Classification.} 57M25, 05C50

\end{abstract}

\section{Introduction}

A \textit{classical link} $L$ consists of finitely many, pairwise disjoint,
piecewise smooth closed curves in $\mathbb{R}^{3}$ or $\mathbb{S}^{3}$; each
individual closed curve is a \textit{component} of $L$. A \textit{regular
diagram} $D$ of $L$ is obtained from a regular projection in the plane -- that
is, a projection whose only singularities are double points called
\textit{crossings} -- by specifying the over-and under-crossing arcs at each
crossing. The projection itself is a 4-regular plane graph $U$, the
\textit{universe} of $D$. Saying that $U$ is a \textit{plane graph} means that
it is given with a specific embedding in $\mathbb{R}^{2}$; when the embedding
is forgotten $U$ becomes an \textit{abstract graph}. We presume $L$ is given
with orientations of its components; these orientations make $U$ into a 2-in,
2-out digraph $\vec{U}$. Combinatorial structures associated with $D$
incorporate geometric information about its embedding in the plane in various
ways -- for instance the classical checkerboard graph is defined using the
connected components of $\mathbb{R}^{2}-U$, a rigid graph specifies for each
vertex of $U$ the cyclic order of the incident edges, a signed Gauss code
specifies the cyclic order of the crossings on each link component along with
underpassing-overpassing information at each crossing, and an atom includes an
embedding of a graph on a surface.

\bigskip

Most combinatorial structures associated to classical link diagrams must be
adjusted when we try to extend them to \textit{virtual link diagrams}
\cite{Kv}. Each crossing in a virtual link diagram has three possible
configurations. In addition to the two classical configurations with different
choices of under- and over-crossing arcs, there is the virtual configuration,
in which two arcs of the diagram simply cross each other without intersecting.
The virtual crossings are \textquotedblleft not really there\textquotedblright%
\ \cite{Kv}, in the same way that a plane representation of a graph may
contain apparent edge-intersections that are \textquotedblleft not really
there.\textquotedblright\ The universe of a virtual link diagram is an
abstract 4-regular graph with vertices corresponding only to the classical
crossings; the virtual diagram gives a regular immersion of the graph in
$\mathbb{R}^{2}$, not an embedding as in the classical case. The virtual
crossings naturally suggest an embedding of the universe on a closed,
orientable surface $S$ of some genus, and the diagram can then be thought of
as representing a link in the thickened surface $S\times\mathbb{R}$.
Consequently one way to extend notions involving the plane geometry of
classical link diagrams is to use the geometry of the surface $S$; for
instance the classical checkerboard graphs extend in this manner to atoms
associated with virtual diagrams. (By the way we should make it clear that we
use the terms \textit{link}, \textit{virtual link} and \textit{classical or
virtual link} interchangeably; when we want to be restrictive we specify that
a link is classical or non-classical.)

\bigskip

The Kauffman bracket and Jones polynomial of a classical link $L$ may be
described in a variety of ways using combinatorial structures associated with
regular diagrams of $L$; Kauffman's basic description follows. Each crossing
of a regular diagram $D$ has two \textit{smoothings}, one denoted $A $ and the
other denoted $B$; distinguishing between them is another example of the use
of geometric information mentioned in the first paragraph above, as it
involves the orientation of the plane in which $D$ is drawn. If $D$ has $n$
crossings then it has $2^{n}$ \textit{states}, obtained by applying either the
$A$ or the $B$ smoothing at each crossing. Given a state $S$ let $a(S)$ denote
the number of $A$ smoothings in $S$, $b(S)=n-a(S)$ the number of $B$
smoothings in $S$, and $c(S)$ the number of simple closed curves in $S $,
including any crossing-free components that might appear in $D$. (These simple
closed curves are called ``loops'' in \cite{Kv}, but we reserve this word for
graph-theoretic use.) Then the \textit{Kauffman bracket polynomial} of $D$ is
a sum indexed by states:
\[
\lbrack D]=\sum_{S}A^{a(S)}B^{b(S)}d^{c(S)-1}.
\]
%

\begin{figure}
[ptb]
\begin{center}
\includegraphics[
trim=0.360464in 7.634094in 0.000000in 1.133077in,
height=1.0897in,
width=4.363in
]%
{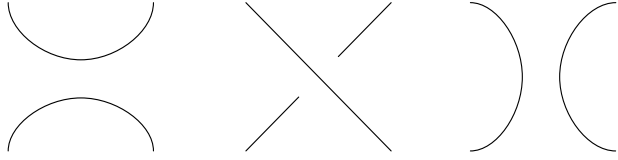}%
\caption{The $A$ smoothing is on the left and the $B$ smoothing is on the
right.}%
\label{wifig1}%
\end{center}
\end{figure}

\bigskip

Almost immediately after the Kauffman bracket was introduced, it was related
to the Tutte polynomial through the checkerboard graph construction \cite{Th}.
A classical link diagram $D$ has an \textit{alternating} orientation on its
universe $U$ -- i.e., the edges of $U$ may be directed in such a way that when
one traverses any component of the link, one encounters forward- and
backward-directed edges alternately.\ Such an alternating orientation arises
naturally from the cycles that bound the complementary regions of $D$, and is
associated with the relationship between Kauffman states of $D$ and spanning
forests in the checkerboard graph that underlies the Jones-Tutte connection.
This connection between spanning forests and Kauffman states is also part of a
combinatorial discussion of the Alexander polynomial \cite{FKT}. By the way,
the relationship between Eulerian circuits and spanning trees in regular
directed graphs, known as the BEST theorem, appeared in the combinatorial
literature long before the Jones polynomial was introduced \cite{BA, St}; the
special relationship between circuit partitions in a 4-regular plane graph and
the Tutte polynomial of an associated checkerboard graph also appeared first
outside knot theory \cite{J, Ma}.

\bigskip

The Kauffman bracket of a virtual link diagram is described in almost exactly
the same way as that of a classical link diagram; the only difference is that
we cannot say that $c(S)$ denotes the number of \textit{simple }closed curves
in $S$, as the closed curves need not be simple. Much of the Jones-Tutte
machinery extends directly to virtual link diagrams which possess alternating
orientations or (equivalently) orientable atoms; see \cite{Ka} and also
Chapter IV of \cite{MK}. If the Tutte polynomial is augmented with topological
information to give an invariant of graphs embedded on surfaces as in
\cite{BR1, BR2}, then the relationship between links in thickened surfaces and
virtual links leads to a general relationship between this topological Tutte
polynomial and the Jones polynomial \cite{CP, CV}.

\bigskip

In \cite{TZ} Zulli and the present author proposed a different, purely
combinatorial approach to the Jones polynomials and Kauffman brackets of
classical and virtual knots (one-component links) that uses surprisingly
little geometric information about knot diagrams, and does not involve either
the ordinary Tutte polynomial or the topological Tutte polynomial. If $D$ is a
regular diagram of a classical or virtual knot $K$ then the universe $U$ of
$D$ has an interlacement graph defined with respect to the Euler circuit
obtained directly from $K$, and the Kauffman bracket $[D]$ can be obtained
from this interlacement graph without using complementary regions, ordering
the crossings, specifying over- and under-crossing arcs, ordering the edges
incident on a vertex, distinguishing between smoothings, counting closed
curves, or considering closed surfaces of any genus. All that is required is
to record whether each crossing's contribution to the writhe is +1 or -1; this
is done by attaching a loop to each vertex that corresponds to a crossing
whose writhe contribution is -1. The resulting \textit{looped interlacement
graph} is denoted $\mathcal{L}(D)$. $\mathcal{L}(D)$ is obtained by
considering $U$ as an abstract graph rather than an embedded one, so any
virtual crossings are really not there (rather than merely \textquotedblleft
not really there\textquotedblright)\ as far as $\mathcal{L}(D)$ is concerned.

\bigskip

The analysis of \cite{TZ} describes the Jones polynomial of a classical or
virtual knot as the result of the composition of three functions. The first
function is the looped interlacement graph construction. The second function
associates a 3-variable graph bracket polynomial to an arbitrary graph, using
either a recursion involving the local complement and pivot operations that
have appeared in the theory of circle graphs and interlace polynomials
\cite{A1, A2, A, Bc, B, K} or a formula involving matrix nullities over
$GF(2)$. Zulli \cite{Z} proved that this formula, when applied to looped
interlacement graphs, correctly assesses the number of closed curves in each
Kauffman state. The third function modifies the 3-variable graph bracket
polynomial so that the resulting graph Jones polynomial is invariant under
appropriate graph\ Reidemeister moves.

\bigskip

\bigskip

$D\qquad\mapsto\qquad\mathcal{L}(D)\qquad\qquad\mapsto\qquad\qquad
\lbrack\mathcal{L}(D)]\qquad\mapsto\qquad V_{\mathcal{L}(D)}(t)$

\bigskip

\noindent(virtual)\qquad\ \ looped interlace-\qquad\qquad3-variable
graph\qquad\ \ \ \ \ \ graph

\noindent\ \ \ knot\qquad\ \ \ ment graph using \qquad\qquad\ \ \ \ bracket
\qquad\qquad\ \ \ \ \ Jones

\noindent diagram\qquad\ \ Euler circuit $K\qquad\qquad\ \ \ \ $%
polynomial\qquad\qquad\ polynomial

\bigskip

\bigskip

The purpose of the present paper is to present an extension of this
description to the Jones polynomials of classical and virtual links. Each of
the three steps must be modified to accommodate the extension.

\bigskip

\bigskip

$D\qquad\mapsto\qquad\mathcal{L}(D,C)\qquad\qquad\mapsto\quad\qquad
\lbrack\mathcal{L}(D,C)]\qquad\mapsto\qquad V_{\mathcal{L}(D,C)}(t)$

\bigskip

\noindent(virtual)\qquad\ \ \ looped interlace-\qquad\qquad\qquad
3-variable\qquad\ \ \ \ \ \ marked-graph

\noindent\ \ \ link\qquad\qquad ment graph using \qquad
\ \ \ \ \ \ \ \ marked-graph \qquad\ \qquad Jones

\noindent diagram\qquad\ \ \ an Euler system $C\qquad\ \ \ \ $%
bracket\ polynomial\qquad\ polynomial

\bigskip

The first step is again the construction of a looped interlacement graph. An
interlacement graph of a 4-regular graph or a 2-in, 2-out digraph is
constructed by using a specific Euler circuit, or if the original graph is not
connected a specific \textit{Euler system} containing an Euler circuit for
each connected component. A classical or virtual knot diagram has an Euler
circuit arising naturally from the diagrammed knot, but in general there is no
canonical choice of an Euler system in a universe $U$ associated to a diagram
$D$ representing a multi-component link. Consequently our discussion of this
step in Section 2 involves two complications that did not affect \cite{TZ}:
$U$ need not be connected, and we must allow a variety of Euler systems in $U$.

\bigskip

The first of these complications is accommodated using a common
graph-theoretic convention, which allows graphs to include free loops; a free
loop is not incident on any vertex, but does count as a connected component.
If $U$ is a disconnected graph with $c(U)>1$ connected components, we
\textquotedblleft record\textquotedblright\ this fact by inserting $c(U)-1$
free loops into the interlacement graph $\mathcal{L}(D,C)$. This might seem
unnatural at first, but it is necessary if we want to recover the Kauffman
bracket from $\mathcal{L}(D,C)$; without some such convention interlacement
cannot detect crossing-free unknotted components, or (more generally)
distinguish connected sums from split unions.

\bigskip%

\begin{figure}
[ptb]
\begin{center}
\includegraphics[
trim=0.799862in 6.689842in 0.000000in 1.072899in,
height=2.2295in,
width=5.61in
]%
{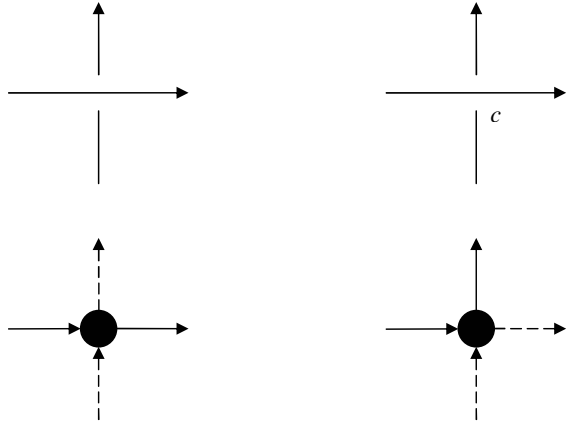}%
\caption{Marked crossings indicate the transitions of a directed Euler
system.}%
\label{lbracfi2}%
\end{center}
\end{figure}

The second complication is handled by adopting a convention to identify the
Euler system used to produce a looped interlacement graph $\mathcal{L}(D,C)$.
We presume the link $L$ is given with orientations of its components, and we
restrict our attention to directed Euler systems of the directed universe
$\vec{U}$. Each crossing of $D$ is then marked to record which of the two
possible \textit{transitions} (edge-pairings) occurs at that crossing in the
Euler system $C$ used to produce $\mathcal{L}(D,C)$. (Transitions provide a
standard combinatorial description of circuits in 4-regular graphs and 2-in,
2-out digraphs; see for instance \cite{E, J, K}.) Crossings at which $C$
follows the incident link component(s) are left unmarked, and crossings at
which $C$ is orientation-consistent with the incident link component(s) but
does not\ follow it (or them) are marked with the letter $c$; see Figures
\ref{lbracfi2} and \ref{lbracfi3}. The same convention is used to mark the
vertices of $U$ and $\mathcal{L}(D,C)$. According to results discussed in
\cite{A1, A2, K, Pev, U}, different directed Euler systems of $\vec{U}$ are
interconnected through \textit{transposition}, a combinatorial operation. It
follows that changing the choice of $C$ affects $\mathcal{L}(D,C)$ through an
appropriately modified version of the pivot operation, which we call a
\textit{marked pivot}.%

\begin{figure}
[tb]
\begin{center}
\includegraphics[
trim=0.796564in 4.573995in 0.000000in 0.910306in,
height=3.7801in,
width=5.3904in
]%
{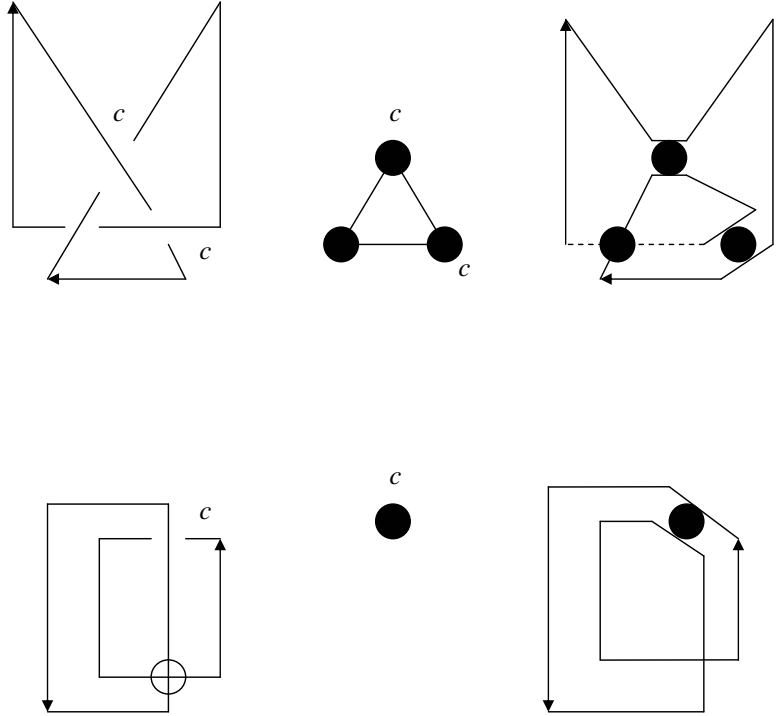}%
\caption{Marked diagrams and interlacement graphs indicate Euler systems.}%
\label{lbracfi3}%
\end{center}
\end{figure}

\bigskip

As in \cite{TZ}, the second step uses a formula involving matrix nullities
over $GF(2)$ to associate a 3-variable bracket polynomial to an arbitrary
graph; but now we consider arbitrary graphs with marked vertices. The
marked-graph bracket polynomial of a looped interlacement graph $\mathcal{L}%
(D,C)$ is the same as the Kauffman bracket polynomial of $D$; the relationship
between the matrix nullity formula and the Kauffman bracket is verified using
an extension of a combinatorial equality due to Cohn and Lempel \cite{CL}.
This equality is presented briefly in Section 4, and more fully in \cite{T};
it incorporates the Cohn-Lempel equality along with results used in \cite{L,
Me, Z}.

\bigskip

In Section 5 we define the 3-variable marked-graph bracket polynomial and
deduce that $[\mathcal{L}(D,C)]=[D]$. We also prove a very useful result: the
bracket is invariant under the marked pivot operation mentioned above. This
invariance allows us to develop the theory of the marked-graph bracket using
many of the results of \cite{TZ}, even though marked graphs did not appear
there; in essence, we often use marked pivots to simply push marked vertices
out of the way. For instance, in Section 6 we show that the recursion given in
\cite{TZ} extends to a recursive description of the marked-graph bracket
polynomial involving the local complementation and pivot operations. The
recursion uses formulas that can only be applied to special configurations in
which certain vertices have no marked neighbors; such configurations can
always be created using marked pivots.

\bigskip

The third step involves proving that the marked-graph bracket polynomial
yields a marked-graph Jones polynomial that is invariant under certain
Reidemeister moves on marked graphs; these graph moves generate the
``images''\ of the familiar Reidemeister moves of knot theory under the
construction of $\mathcal{L}(D,C)$. We say \textit{generate} because we do not
explicitly exhibit all the marked-graph Reidemeister moves; following an idea
of \"{O}stlund \cite{O}, we only exhibit enough basic moves to generate the
rest through composition. As in \cite{TZ}, the fact that $\mathcal{L}(D,C)$ is
defined by considering $U$ as an abstract graph rather than a graph drawn on a
plane or a surface is reflected in the fact that we can simply ignore the
virtual Reidemeister moves of \cite{Kv}.

\bigskip

In sum, our results show that the Kauffman bracket and Jones polynomial can be
defined for either of the following abstract (non-embedded) combinatorial structures.

\bigskip

\begin{itemize}
\item a 2-in, 2-out digraph $\vec{U}$, given with a partition of its edge-set
into directed circuits and a partition of its vertex-set into two subsets

\item an undirected graph $G$, given with a partition of its vertex-set into
two subsets
\end{itemize}

\bigskip

It would be interesting to understand how these two types of abstract
combinatorial structures are related to other knot-theoretic notions. For
instance, orientable and non-orientable atoms have been used for a variety of
purposes, e.g., to construct Khovanov homology for virtual knots \cite{Mv}.
Atoms involve graphs embedded on surfaces, so they contain more geometric
information than the abstract combinatorial structures. Perhaps some
constructions based on atoms do not require all of this geometric information,
and can be modified to use one of the abstract combinatorial structures.
Manturov has also recently introduced the idea of \textit{free} knots and
links, which are equivalence classes of 4-regular graphs under modified
versions of the Reidemeister moves \cite{M1, M2}; they are related to the
first kind of abstract combinatorial structure mentioned above, with the
vertex-partition forgotten. Perhaps the combinatorial theory of circuit
partitions in 4-regular graphs will turn out to be useful in defining
invariants of these objects.

\bigskip

It would also be interesting to conduct a purely combinatorial investigation
of the bracket as an invariant of either of these structures, apart from the
special case in which $\vec{U}$ is the universe of a diagram of an oriented
link, the circuit partition of $E(U)$ consists of the link's components, the
partition of $V(U)$ represents the distinction between positive and negative
crossings, $G$ is the looped interlacement graph of $\vec{U}$ with respect to
an Euler system $C$, the partition of $V(G)$ represents vestigial information
regarding $C$, and free loops represent all but one of the connected
components of $U$. In particular, we wonder whether there might be a useful
relationship between the bracket polynomial and the interlace polynomials of
\cite{A1, A2, A} or the multivariate interlace polynomial of \cite{Ci}. The
same comments apply to the Jones polynomial, which is naturally regarded as a
simplification of the bracket, modified so as to be invariant under a certain
equivalence relation which is motivated topologically but may be described in
purely combinatorial terms.

\bigskip

Before closing this introduction we should thank L. Zulli for many hours spent
working together to formulate and make precise the ideas of \cite{TZ}, and
also for his comments on early drafts of the current paper. D. P. Ilyutko was
kind enough to bring \cite{IM} to our attention; the present work developed as
we tried to understand the differences and similarities of the two approaches.
V. O. Manturov's good advice improved the paper in several ways. We are also
grateful to Lafayette College for its support.

\section{Euler circuits and interlacement graphs}

In this section we discuss some terminology and results about Eulerian graphs,
and then apply these notions to link diagrams. Eulerian graphs have been
studied for centuries; it is not surprising that terminology has varied over
the years, and some results have been rediscovered. For us a \textit{graph}
may have loops or multiple edges, and each edge has two distinct
\textit{directions}; in a \textit{directed graph} a preferred direction has
been chosen for each edge. The terms \textit{adjacent} and \textit{neighbor}
refer to non-loop edges only; no vertex is adjacent to itself or a neighbor of
itself. In order to provide appropriate notation for the two different
directions of a loop, each edge is technically regarded as consisting of two
distinct half-edges, one incident on each end-vertex; the preferred direction
of a directed edge is expressed by designating one half-edge
as\textit{\ initial }and the other as \textit{terminal}. We will almost always
abuse notation by leaving it to the reader to split edges into half-edges. A
\textit{path} in a graph is a sequence $v_{1},e_{1},v_{2},e_{2},...,v_{n-1}%
,e_{n-1},v_{n}$ (technically $v_{1},h_{1},h_{1}^{\prime},v_{2},h_{2}%
,h_{2}^{\prime},...,v_{n-1},h_{n-1},h_{n-1}^{\prime},v_{n}$ where $e_{i}$ has
the half-edges $h_{i}$ and $h_{i}^{\prime}$) such that for each $i<n$, the
vertices $v_{i}$ and $v_{i+1}$ are incident on the edge $e_{i}$; in a directed
graph a \textit{directed path} must respect all edge-directions, and an
\textit{undirected path} might not respect some edge-directions. If
$v_{1}=v_{n}$ the path is \textit{closed}; in this case it is customary but
not mandatory to omit $v_{n}$ from the list. A \textit{circuit} is a closed
path in which no edge appears more than once; digraphs have \textit{directed
circuits} and \textit{undirected circuits}. An \textit{Euler circuit} in a
connected graph is a circuit in which every edge of the graph appears, and an
\textit{Euler system} in a graph is a\ set that contains one Euler circuit for
each connected component. A connected, undirected graph has an Euler circuit
if and only if every vertex is of even degree; a connected, directed graph has
a directed Euler circuit if and only if every vertex has equal indegree and outdegree.

\bigskip

We allow graphs to have \textit{free loops}. These do not interact in any way
with the vertices and edges of the graph, and they do not affect anything we
would otherwise say about the graph, except for the fact that each free loop
is counted as a connected component. In particular, a free loop is not a kind
of loop; indeed if the terminology were not standard we would call them
``empty components.''\ For example, if an undirected graph $G $ consists of a
4-cycle and two free loops then $G$ is 2-regular, and if $e_{1},e_{2}$ are two
edges in $G$ that are not incident on the same vertex then $G-e_{1}-e_{2}$ is
bipartite and 1-regular; $G$ has three connected components and $G-e_{1}%
-e_{2}$ has four. As a free loop is a connected component, an Euler system for
a graph must contain all the graph's free loops. In figures representing
graphs, free loops are drawn as circles with no incident vertices.

\bigskip

Let $U$ be an undirected 4-regular graph with $V(U)=\{v_{1},...,v_{n}\}$. It
need not be the universe of any link diagram; we use the letter $U$ only for
notational consistency.

\begin{definition}
\label{interlacement} \emph{\cite{RR}} If $C$ is an Euler system for $U$ then
the \emph{interlacement matrix} $I(U,C)$ is the $n\times n$ matrix over
$GF(2)$ whose $ij$ entry is $\emph{1}$ if and only if $i\neq j$ and $v_{i}$
and $v_{j}$ are \emph{interlaced} in $C$, i.e., when we follow $C$ starting at
$v_{i}$ we encounter $v_{j}$, then $v_{i}$, then $v_{j}$ again before finally
returning to $v_{i}$.
\end{definition}

\begin{definition}
\label{intergraph} If $U$ has $c(U)$ connected components then the
\emph{interlacement graph} $\mathcal{I}(U,C)$ is obtained by adjoining
$c(U)-1$ free loops to the simple graph with vertex-set $V(U)$ and adjacency
matrix $I(U,C)$.
\end{definition}

Definition \ref{intergraph} differs from the corresponding definition of
\cite{RR} in its use of free loops. Observe that if $U$ is disconnected then
$I(U,C)$ consists of diagonal blocks corresponding to the interlacement
matrices of the connected components of $U$. The same occurs if $U$ is a
``connected sum,''\ i.e., if $U$ has two subgraphs $U_{1}$ and $U_{2}$ such
that $V(U)=V(U_{1})\cup V(U_{2})$ and $\left|  E(U)-E(U_{1})-E(U_{2})\right|
=2$. The free loops in Definition \ref{intergraph} allow $\mathcal{I}(U,C)$ to
distinguish these two situations: the interlacement graph of the disjoint
union of $U_{1}$ and $U_{2}$ has one more free loop than the interlacement
graph of a connected sum of $U_{1}$ and $U_{2}$.

\begin{definition}
\emph{\cite{K}} The $\kappa$\emph{-transform} $C\ast v$ of an\ Euler system
$C$ at a vertex $v$ is obtained by reversing one of the two $v$-to-$v$ paths
within the Euler circuit of $C$ in the connected component of $U$ containing
$v$.
\end{definition}

The interlacement graph $\mathcal{I}(U,C\ast v)$ is obtained from
$\mathcal{I}(U,C)$ by toggling (i.e., changing) every adjacency involving
neighbors of $v$ in $\mathcal{I}(U,C)$; that is, if $x,y\not \in \{v\}$ are
two distinct neighbors of $v$ in $\mathcal{I}(U,C)$, then $x$ and $y$ are
adjacent in $\mathcal{I}(U,C\ast v)$ if and only if they are not adjacent in
$\mathcal{I}(U,C)$.

\bigskip

If we apply a single $\kappa$-transformation to a directed Euler system of a
2-in, 2-out digraph $\vec{U}$ then the result is no longer compatible with the
edge-directions of $\vec{U}$. Suppose $C$ is a directed Euler system in
$\vec{U}$, and suppose $v$ and $w$ are two vertices of $\vec{U}$ that are
interlaced with respect to $C$. Then $C$ contains a circuit $vC_{1}%
wC_{2}vC_{3}wC_{4}$. The $\kappa$-transform $C\ast v$ contains an undirected
circuit $vC_{1}wC_{2}v\bar{C}_{4}w\bar{C}_{3}$; the overbars indicate that the
paths $wC_{4}v$ and $vC_{3}w$ have been reversed. Then $C\ast v\ast w$
contains $vC_{1}wC_{4}v\bar{C}_{2}w\bar{C}_{3}$ and $C\ast v\ast w\ast v$
contains $vC_{1}wC_{4}vC_{3}wC_{2}$. The lack of overbars indicates that this
last is a directed circuit of $\vec{U}$. Arratia, Bollob\'{a}s and Sorkin
\cite{A1, A2} call the operation $C\mapsto C\ast v\ast w\ast v$ a
\textit{transposition} on the pair $vw$. Comparing $vC_{1}wC_{2}vC_{3}wC_{4}$
to $vC_{1}wC_{4}vC_{3}wC_{2}$, we see that $\mathcal{I}(U,C\ast v\ast w\ast
v)$ is obtained from $\mathcal{I}(U,C)$ by interchanging the neighbors of $v$
and $w$, and toggling every adjacency between vertices $x,y\not \in \{v,w\}$
such that $x$ is adjacent to $v$, $y$ is adjacent to $w$, and either $x$ is
not adjacent to $w$ or $y$ is not adjacent to $v$. That is, $\mathcal{I}%
(U,C\ast v\ast w\ast v)$ is obtained from the \textit{pivot} $\mathcal{I}%
(U,C)^{vw}$ by interchanging the neighbors of $v$ and $w$. (Simply
interchanging the names of $v$ and $w$ would have the same effect, but we
prefer not to do this as it would complicate our discussion later; see for
instance Lemma \ref{transform} below.)

\bigskip

A natural way to identify an Euler system $C$ in a 4-regular graph $U$ is to
record, for each vertex of $U$, which of the three possible
\textit{transitions} (matchings of the incident half-edges) is used by $C$.
This gives a rather complicated description, listing a pair of incident
half-edges at each vertex. It is much simpler to use transitions to compare
two Euler systems $C$ and $C^{\prime}$. If we choose orientations for the
Euler circuits in $C$ then these orientations allow us to make $U$ into a
2-in, 2-out digraph $\vec{U}$: at each vertex $v$, $C^{\prime}$ must have
either the same transition as $C$, the other transition that is compatible
with the edge-directions in $\vec{U}$, or the transition that is incompatible
with the edge-directions of $\vec{U}$. We say $C^{\prime}$ \textit{follows}
$C$ at $v$, $C^{\prime}$ is \textit{compatible with }$C$ \textit{without
following it} at $v$, or $C^{\prime}$ \textit{is incompatible} with $C$ at
$v$, respectively, to describe the three cases. The same description of the
relationship between the transitions of $C$ and $C^{\prime}$ at $v$ will hold
if some circuits of $C$ are oriented differently. This simple description of
the relationship between $C$ and $C^{\prime}$ is useful in proving the
following result, due to Kotzig \cite{K}, Ukkonen \cite{U} and Pevzner
\cite{Pev}.

\begin{proposition}
\label{ktransform} Every two Euler systems $C$ and $C^{\prime}$ of a 4-regular
undirected graph $U$ are connected to each other through a sequence of
$\kappa$-transformations. If there is no vertex at which $C^{\prime}$ is
incompatible with $C$, they are connected through a sequence of transpositions.
\end{proposition}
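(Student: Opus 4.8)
The plan is to prove both statements by induction on a suitable complexity measure attached to the pair $(C,C')$, using the half-edge transition description introduced just before the Proposition. First I would fix orientations for the circuits of $C$, making $U$ into a 2-in, 2-out digraph $\vec U$; as noted in the excerpt, the classification of each vertex $v$ as a \emph{follow}, \emph{compatible-non-follow}, or \emph{incompatible} vertex for $C'$ relative to $C$ does not depend on this choice of orientations. Let $a(C,C')$ be the number of incompatible vertices and $b(C,C')$ the number of compatible-non-follow vertices; the pair $(a,b)$ (ordered lexicographically, say) will be the induction parameter. The base case $a=b=0$ means $C'$ follows $C$ at every vertex, which forces $C'=C$ componentwise and there is nothing to prove.

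For the inductive step I would argue that, as long as $(a,b)\neq(0,0)$, there is a vertex $v$ such that applying a single $\kappa$-transformation $C\mapsto C\ast v$ (or, in the second statement, a transposition on a well-chosen interlaced pair) strictly decreases the parameter. The key observation is local: a $\kappa$-transformation at $v$ changes the transition of $C$ at $v$ from one $\vec U$-compatible transition to the other $\vec U$-compatible transition, and changes the transition at every \emph{other} vertex not at all — so the only vertex whose status relative to $C'$ can change is $v$ itself. Thus if $v$ is a vertex where $C'$ is compatible-but-does-not-follow $C$, then $C\ast v$ follows $C'$ at $v$ while agreeing with $C$ at all other vertices; this drops $b$ by one and leaves $a$ unchanged, completing the induction for the transposition statement once I check that such a $\kappa$-transformation can be realized as part of a transposition (i.e. that one can reach $C'$ through $\kappa$-transformations that never pass through an incompatible transition — more on this below).

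For the first statement I still need to handle incompatible vertices, i.e. the case $a>0$. Here a single $\kappa$-transformation at an incompatible vertex $v$ does not immediately land on $C'$'s transition at $v$, since $C'$ uses the third transition, which is not $\vec U$-compatible. Instead I would show that two $\kappa$-transformations suffice to convert an incompatible vertex into a follow vertex: reorienting (conceptually) one of the two $v$-to-$v$ subpaths of the Euler circuit interchanges which pair of the third transition's half-edges is "initial/terminal," and a second $\kappa$-move then selects the transition that $C'$ actually uses. Concretely, I would track the half-edge pairing through the sequence $C\to C\ast v\to (C\ast v)\ast v' $ for an appropriate second vertex $v'$, or more cleanly use that after one $\kappa$-transformation at $v$ the induced orientation on part of $U$ flips, so that $C'$'s transition at $v$ becomes $\vec U'$-compatible and a further $\kappa$-move reaches it; this decreases $a$ by one at the cost of possibly increasing $b$ by a controlled amount, which is why the lexicographic ordering with $a$ dominant is the right choice. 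This step — bookkeeping exactly how the follow/compatible/incompatible status of the manipulated vertex evolves under successive $\kappa$-transformations, and confirming no other vertex's status is disturbed — is the main obstacle, since it is the only place where the half-edge-level combinatorics must be checked carefully rather than appealed to abstractly.

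One subtlety deserves explicit attention: in the transposition statement I must ensure the intermediate Euler systems are all \emph{directed} Euler systems of $\vec U$, equivalently that every $\kappa$-transformation used keeps every transition $\vec U$-compatible. The transposition $C\mapsto C\ast v\ast w\ast v$ on an interlaced pair $vw$ does exactly this — the excerpt already records that $C\ast v\ast w\ast v$ is again a directed Euler system — so I would phrase the inductive step as: if $v$ is a compatible-non-follow vertex, pick any vertex $w$ interlaced with $v$ in $C$ (one exists unless $v$ is isolated in the interlacement graph, in which case a $\kappa$-transformation at $v$ is itself already a reorientation that can only flip $v$'s own transition and hence trivially stays directed), and perform the transposition on $vw$; then verify via the circuit-word manipulation $vC_1wC_2vC_3wC_4 \mapsto vC_1wC_4vC_3wC_2$ displayed in the excerpt that this changes $v$'s transition to the other $\vec U$-compatible one while the net effect on $w$ and on all other vertices preserves the follow/non-follow pattern relative to $C'$ up to the interchange-of-neighbors bookkeeping, so that $b(C,C')$ strictly decreases. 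Iterating drives $b$ to $0$, and since $a=0$ throughout, we arrive at $C'$.
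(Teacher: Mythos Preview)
Your proposal rests on a mistaken computation of what a $\kappa$-transformation does to the transition at the chosen vertex. You assert that $C\ast v$ swaps the transition at $v$ from one $\vec U$-compatible transition to the other. In fact it does not: if the directed circuit at $v$ reads $\dots a\,v\,c\,P\,b\,v\,d\,Q\dots$ (with $a,b$ the in-half-edges and $c,d$ the out-half-edges, so $C$ uses the compatible transition $\{(a,c),(b,d)\}$), then reversing the $v$-to-$v$ path $c\,P\,b$ produces $\dots a\,v\,b\,\bar P\,c\,v\,d\,Q\dots$, whose transition at $v$ is $\{(a,b),(c,d)\}$, the unique $\vec U$-\emph{incompatible} one. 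The paper says this explicitly just before the proposition: ``If we apply a single $\kappa$-transformation to a directed Euler system of a 2-in, 2-out digraph $\vec U$ then the result is no longer compatible with the edge-directions of $\vec U$.'' So your two inductive cases are reversed. The incompatible case is the trivial one: a single $\kappa$-move at an incompatible vertex $v$ makes $C\ast v$ agree with $C'$ at $v$ while leaving all other transitions untouched, and $a$ drops by one. Your elaborate two-step plan for incompatible vertices is unnecessary.

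The compatible-non-follow case is where the real content lies, and here your transposition argument has a second gap. A transposition $C\mapsto C\ast v\ast w\ast v$ changes the transitions at \emph{both} $v$ and $w$ (each to the other $\vec U$-compatible transition) and at no other vertex. Hence if you pick ``any $w$ interlaced with $v$,'' as you propose, and $w$ happens to be a follow vertex, then after the transposition $v$ becomes a follow vertex but $w$ becomes a compatible-non-follow vertex, and $b$ does not decrease. You must choose $w$ so that it too lies in the disagreement set. The existence of such an interlaced pair inside the disagreement set is not automatic and is precisely the point that requires argument; the paper secures it by choosing $w$ in the disagreement set with the shortest $w$-to-$w$ subpath in $C$ and observing that this subpath must contain another disagreement vertex $w'$ exactly once (otherwise the subpath would itself be a circuit of $C'$, contradicting that $C'$ is an Euler system). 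Your outline does not supply this step.
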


\begin{proof}
If $C$ and $C^{\prime}$ have the same transition at every vertex, then
$C=C^{\prime}$. Proceeding by induction on the number of vertices at which
their transitions differ, we have two cases.

If their transitions are incompatible at a vertex $v$ then let $C^{\prime
\prime}=C\ast v$. $C^{\prime\prime}$ has the same transitions as $C$ except at
$v$, where it shares the transition of $C^{\prime}$, so the inductive
hypothesis gives the result.

Suppose instead that there is no vertex at which $C$ and $C^{\prime}$ have
incompatible transitions. Let $W$ denote the subset of $V(U)$ containing the
vertices at which the transitions of $C$ and $C^{\prime}$ differ, and let
$w\in W$ have the shortest possible $w$-to-$w$ path within $C$; let the Euler
circuit appearing in $C$ that contains $w$ be $wC_{1}wC_{2}$, with $C_{1}$ no
longer than $C_{2}$. If no vertex of $W$ appears on $C_{1}$ the transitions of
$C^{\prime}$ are the same as those of $C$ at every vertex of $wC_{1}$ except
$w$; then the circuit $wC_{1}$ appears in $C^{\prime}$, an impossibility as
$wC_{1}$ is not an Euler circuit. The choice of $w$ implies that no vertex of
$W$ can appear twice on $C_{1}$; hence there is a vertex $w^{\prime}\in W$
that appears precisely once on $C_{1}$. That is, $w$ and $w^{\prime}$ are
interlaced with respect to $C$. Let $C^{\prime\prime}=C\ast w\ast w^{\prime
}\ast w$. Then $C^{\prime\prime}$ has the same transitions as $C$ at all
vertices except $w$ and $w^{\prime}$, where it is compatible with $C$; hence
$C^{\prime\prime}$ has the same transitions at $w$ and $w^{\prime}$ as
$C^{\prime}$, has different transitions from $C^{\prime}$ at two fewer
vertices than $C$, and is not incompatible with $C^{\prime}$ at any vertex.
The inductive hypothesis gives the result.
\end{proof}

\bigskip

Just as using transitions to describe the relationship between two\ Euler
systems is simpler than using transitions to describe a single Euler system,
it is easier to describe an Euler system in the universe of a link diagram
than it is to describe an Euler system in an arbitrary 4-regular graph.
Suppose $D$ is a diagram of a classical or virtual link $L$, given with
orientations of its components; these orientations make $U$ into a 2-in, 2-out
digraph $\vec{U}$. (Any link component that is crossing-free in $D$ appears as
a free loop in $U$.) At each crossing in $D$, there are three transitions: one
followed by the component(s) of $L$, one that does not follow the components
of $L$ but is consistent with the edge-directions of $\vec{U}$, and a third
that is inconsistent with these edge-directions. A directed Euler system for
$\vec{U}$ will not involve any transitions of the third type. A simple
description of a directed Euler system $C$ is incorporated into a link diagram
$D$ by using the letter $c$ to mark each crossing at which $C$ is
orientation-consistent with $L$ without following it. (Crossings at which $C$
follows $L$ are left unmarked.) See Figures \ref{lbracfi2} and \ref{lbracfi3}
in the Introduction. We do not use the traditional notation for smoothings
(with $0$ instead of $c$), in order to avoid confusion between notation that
identifies an Euler system in $U$ and notation that identifies a new diagram
obtained by modifying $D$. The same system is used to mark the vertices in the
interlacement graph $\mathcal{I}(U,C)$, resulting in the \textit{marked
interlacement graph }$\mathcal{I}(D,C)$.

\bigskip%

\begin{figure}
[h]
\begin{center}
\includegraphics[
trim=1.338326in 6.878107in 0.000000in 1.190565in,
height=1.9995in,
width=5.2062in
]%
{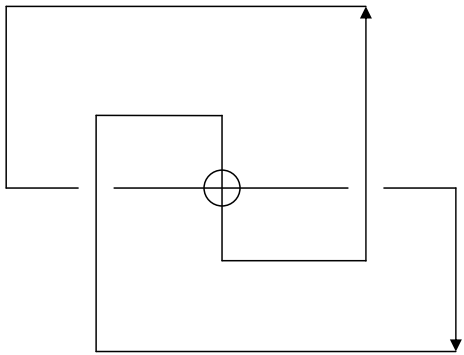}%
\caption{Kamada's virtual knot.}%
\label{lbracfi4}%
\end{center}
\end{figure}

As noted in the Introduction, the famous Jones-Tutte connection of \cite{Th}
is related to a choice of edge-directions in $U$ which is \textit{alternating}
in the sense that as one traverses a component of the link, one encounters a
reversal in edge-direction each time one passes through a vertex. If $U$
admits an alternating orientation as a 2-in, 2-out digraph then there is an
alternating diagram $D^{alt}$ with universe $U$: at each classical crossing
the arc with both edges directed inward is the underpassing arc in $D^{alt}$.
Lemma 7 of \cite{Ka} then tells us that $L$ is checkerboard colorable;
\cite{Ka} also tells us that not all virtual links are checkerboard colorable,
so restricting attention to Euler systems that give alternating orientations
would involve a significant loss of generality. One way to avoid this loss of
generality is to use nonorientable atoms; see \cite{Mv} for instance. Our
approach avoids loss of generality in a different way, by using
edge-directions that are consistent with the orientations of the components of
$L$ instead of using alternating edge-directions. By the way, a simple
induction on the number of virtual crossings shows that it is always possible
to choose an alternating orientation for a virtual knot diagram, but the
orientation may not correspond to a 2-in, 2-out digraph. For instance every
alternating orientation of the virtual knot diagram from \cite{Ka} indicated
in Figure \ref{lbracfi4} produces a vertex of indegree 4 and a vertex of
outdegree 4. Diagrams of multi-component virtual links need not have any
alternating orientations at all; for instance consider a diagram with only one
virtual crossing, which involves two different link components.

\begin{definition}
Suppose we are given a diagram $D$ of an oriented classical or virtual link
$L$, along with a directed Euler system $C$ for $\vec{U}\,$. Then the
\emph{looped interlacement graph}\textit{\ }$\mathcal{L}(D,C)$ is obtained
from $\mathcal{I}(D,C)$ by attaching a loop at each vertex corresponding to a
negative crossing.
\end{definition}%

\begin{figure}
[h]
\begin{center}
\includegraphics[
trim=0.000000in 7.690801in 0.000000in 1.338508in,
height=1.0309in,
width=4.9753in
]%
{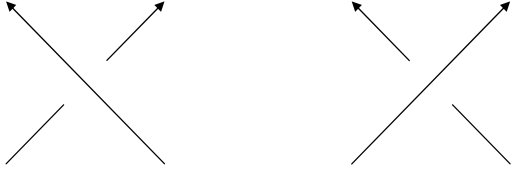}%
\caption{The crossing on the left is negative.}%
\label{lbracfi5}%
\end{center}
\end{figure}

\bigskip

Recall that if $v$ and $w$ are vertices of a graph $G$ then the \textit{pivot}
$G^{vw}$ is the graph obtained from $G$ by toggling (reversing) the adjacency
between every pair of vertices $x,y\not \in \{v,w\}$ such that $x$ is a
neighbor of $v$, $y$ is a neighbor of $w$, and either $x$ is not a neighbor of
$w$ or $y$ is not a neighbor of $x$. (Pivots have no effect on loops.) We
observed above that if a 4-regular graph $U$ has vertices $v,w$ that are
interlaced with respect to an Euler system $C$, then $\mathcal{I}(U,C\ast
v\ast w\ast v)$ is the graph obtained from $\mathcal{I}(U,C)^{vw}$ by
interchanging the neighbors of $v$ and $w$. This observation applies to looped
interlacement graphs as follows.

\begin{lemma}
\label{transform} Suppose $D$ is an oriented link diagram with a directed
Euler system $C$, and $v,w$ are adjacent in the looped interlacement graph
$\mathcal{L}(D,C)$. Then $\mathcal{L}(D,C\ast v\ast w\ast v)$ is the marked
graph obtained from $\mathcal{L}(D,C)^{vw}$ by toggling the marks on $v$ and
$w$ (i.e., removing a $c$ at $v$ or $w$ if one is there, and adjoining a $c$
at $v$ or $w$ if one is not there), and interchanging the neighbors of $v$ and
$w$.
\end{lemma}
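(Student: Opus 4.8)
The plan is to reduce Lemma~\ref{transform} to the purely combinatorial observation about interlacement matrices that was recorded just before the statement, namely that $\mathcal{I}(U,C\ast v\ast w\ast v)$ is obtained from the pivot $\mathcal{I}(U,C)^{vw}$ by interchanging the neighbors of $v$ and $w$. The only things that $\mathcal{L}(D,C)$ adds to $\mathcal{I}(U,C)$ are (i) the loops recording negative crossings and (ii) the $c$-marks recording where $C$ fails to follow $L$. Since the construction of $\mathcal{L}(D,C)$ from $\mathcal{I}(D,C)$ does not depend on $C$, and since pivots have no effect on loops, the loop data is carried along verbatim; so the entire content of the lemma beyond the cited observation is the claim that the $c$-marks at $v$ and $w$ get toggled while all other marks are unchanged. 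Thus the proof splits cleanly into two independent pieces: the adjacency/neighbor-interchange part, which is immediate from the earlier discussion, and the marking part, which is the real work.

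For the marking part, I would argue directly from the transition description of $\kappa$-transforms and the definition of the $c$-mark. Orient the circuits of $C$ so that $\vec U$ is a fixed $2$-in, $2$-out digraph; the $c$-mark at a vertex $u$ records whether the transition of $C$ at $u$ is the ``follow'' transition of $L$ or the other orientation-consistent transition. The passage preceding Proposition~\ref{ktransform} establishes exactly what I need: the transposition $C\mapsto C\ast v\ast w\ast v$ changes the transition of $C$ at precisely the two vertices $v$ and $w$ (where the new transition is again orientation-consistent, being the ``other'' consistent one), and leaves the transition unchanged at every other vertex. Since the three transitions at any crossing are: the $L$-following one, the other consistent one, and the inconsistent one, changing between the two consistent transitions at $v$ (resp.\ $w$) is exactly toggling the $c$-mark there. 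At every other vertex the transition, hence the mark, is untouched. This gives the marking claim.

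The one subtlety worth spelling out, and the step I expect to be the main (though minor) obstacle, is the bookkeeping around the neighbor interchange: the earlier observation describes $\mathcal{I}(U,C\ast v\ast w\ast v)$ as $\mathcal{I}(U,C)^{vw}$ \emph{with the neighbors of $v$ and $w$ swapped}, and I must check that this swap is compatible with attributing the toggled mark to $v$ versus $w$ in the correct way. Concretely, after the transposition the vertex that plays the structural role of ``old $v$'' carries the transition that used to be at $v$'s position modified, and one must verify that the convention of interchanging neighbors rather than renaming vertices keeps the mark on the vertex labelled $v$ (not $w$). Tracing the circuit $vC_1wC_2vC_3wC_4 \mapsto vC_1wC_4vC_3wC_2$ shows the transition at the vertex named $v$ is determined by the pair of half-edges where the $C_1$ and $C_4$ (formerly $C_1$ and $C_2$) segments meet $v$, and similarly at $w$; checking that this is the ``other consistent'' transition at each of $v,w$ and the original transition everywhere else finishes the proof. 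Everything else is the routine transcription of Definitions~\ref{interlacement}--\ref{intergraph} and the looped-interlacement-graph definition, together with the fact that the loop-attachment step commutes with pivots and with re-choosing $C$.
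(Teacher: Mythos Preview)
Your proposal is correct and follows essentially the same approach as the paper: the paper's two-sentence proof simply records that $C\ast v\ast w\ast v$ has the same transition as $C$ at every vertex other than $v,w$, and is compatible with $C$ without following it at $v$ and $w$, relying on the preceding discussion for the pivot/neighbor-interchange part. Your write-up is more detailed (and the ``subtlety'' you flag about the neighbor interchange versus vertex relabeling is handled implicitly by the paper's convention that marks live on vertex labels, not neighborhoods), but the argument is the same.
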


\begin{proof}
At any vertex other than $v$ or $w$, $C\ast v\ast w\ast v$ has the same
transition as $C$. At $v$ and $w$, $C\ast v\ast w\ast v$ is compatible with
$C$ without following it.
\end{proof}

\section{Examples}

Figure \ref{lbracfk1} illustrates some of the definitions given in\ Section 2.
Each row of the figure has an oriented knot diagram on the left-hand side, the
corresponding directed universe in the middle, and the resulting looped
interlacement graph on the right. The first two examples are different marked
versions of a diagram of the figure-eight knot; the transitions of the Euler
circuits are indicated by the patterns of dashes in the corresponding
universes. The third example is the well-known virtual knot of \cite{KS}, and
the fourth is a virtual unknot diagram; their directed universes are
isomorphic abstract graphs. All Euler circuits in the third and fourth
directed universes give rise to the same looped interlacement graph, so we
have not bothered to mark the diagrams or indicate Euler circuits.

\bigskip

Note that Definition \ref{intergraph} does not require any free loops because
all four diagrams are connected. If we were to consider instead the split
diagram consisting of the entire left-hand side of the figure, then the looped
interlacement graph would include the right-hand side together with three
additional free loops.%

\begin{figure}
[ptb]
\begin{center}
\includegraphics[
trim=1.069843in 1.605995in 0.536984in 0.000000in,
height=6.8459in,
width=5.009in
]%
{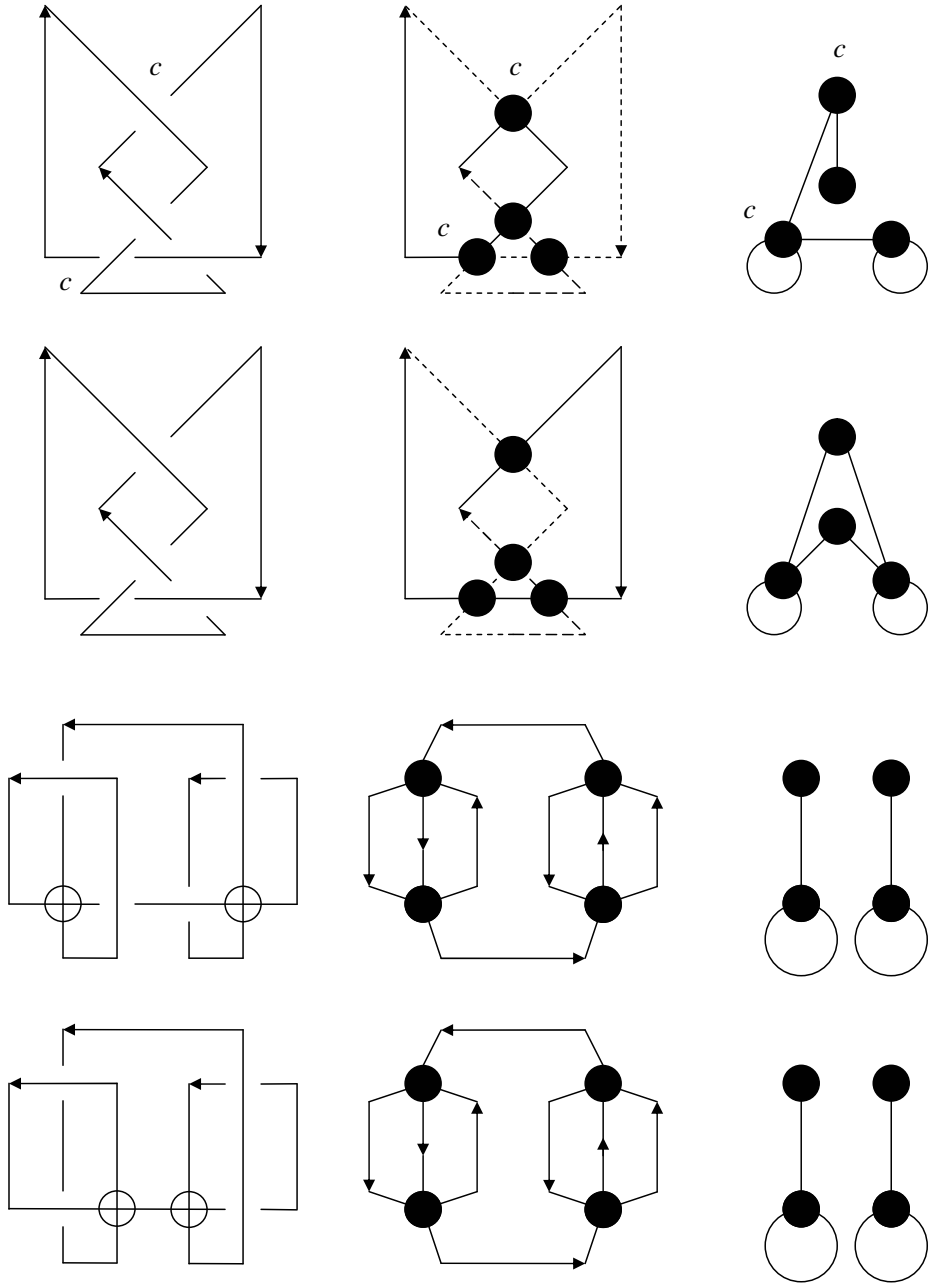}%
\caption{In the top two rows, different marked versions of the same diagram
give rise to different Euler circuits and looped interlacement graphs. In the
bottom two rows, different diagrams give rise to the same universe and looped
interlacement graph.}%
\label{lbracfk1}%
\end{center}
\end{figure}

\section{The extended Cohn-Lempel equality}

Cohn and Lempel \cite{CL} described the number of orbits in a finite set under
a certain kind of permutation using the nullity of an associated binary
matrix. When applied to a connected 2-in, 2-out digraph $\vec{U}$\ their
equality uses the nullity of a submatrix of an interlace matrix to calculate
the number of circuits that appear in a given partition of the edge-set of
$\vec{U}$ into directed circuits. This equality was implicitly rediscovered as
part of the combinatorial theory of the interlace polynomial (Theorem 24 of
\cite{A1} or \cite{A2}), and its usefulness in knot theory was pointed out by
Soboleva \cite{S}. Other equalities similar to that of Cohn and Lempel have
been used by knot theorists: Zulli \cite{Z} used a formula that always refers
to $n\times n$ matrices, and a more general version was stated by Mellor
\cite{Me} and Lando \cite{L}.

\bigskip

An extended version of the Cohn-Lempel equality that includes all of these and
applies to arbitrary circuit partitions in arbitrary 4-regular graphs is
proven in \cite{T}. Let $U$ be an undirected 4-regular graph with
$V(U)=\{v_{1},...,v_{n}\}$, and let $C$ be an Euler system for $U$. Choose an
orientation for each circuit in $C$, and let $\vec{U}$ be the 2-in, 2-out
digraph obtained from $U$ by directing all edges according to these
orientations. Suppose $P$ is a partition of $E(U)$ into undirected circuits;
$P$ should also contain every free loop $U$ might have. Suppose that we follow
a directed edge $e$ of $\vec{U}$ toward a vertex $v_{i}$. If the circuit of
$P$ that contains $e$ leaves $v_{i}$ along the edge $C$ uses to leave $v_{i}$
after arriving along $e$, we say $P$ \textit{follows }$C$ \textit{through}
$v_{i}$. If the circuit of $P$ that contains $e$ leaves $v_{i}$ along the
other edge directed away from $v_{i}$, we say $P$ is
\textit{orientation-consistent at} $v_{i}$\textit{\ but does not follow} $C$.
The last possibility is that the circuit of $P$ that contains $e$ leaves
$v_{i}$ along the other edge directed toward $v_{i}$; in this case we say $P$
is \textit{orientation-inconsistent at} $v_{i}$. (Note that the description of
$P$ at $v_{i}$ is unchanged if we begin with the other edge directed toward
$v_{i}$, and it is not affected by the choice of orientations for the circuits
of $C$ or $P$.) A matrix $I_{P}=I_{P}(U,C)$ is associated to $P$: If $P$
follows $C$ through $v_{i}$ then the row and column of $I(U,C)$ corresponding
to $v_{i}$ are removed; if $P$ is orientation-consistent at $v_{i}$ but does
not follow $C$ then the row and column of $I(U,C)$ corresponding to $v_{i}$
are retained; and if $P$ is orientation-inconsistent at $v_{i}$ then the
off-diagonal entries of the row and column of $I(U,C)$ corresponding to
$v_{i}$ are retained but their common diagonal entry is changed from $0$ to
$1$.

\begin{proposition}
(Extended Cohn-Lempel equality) Let $U$ be an undirected, 4-regular graph with
$c(U)$ connected components, and let $P$ be a partition of $E(U)$ into
undirected circuits; $P$ must also contain every free loop of $U$. Then the
$GF(2)$-nullity of $I_{P}$ is
\[
\nu(I_{P})=\left|  P\right|  -c(U).
\]

\end{proposition}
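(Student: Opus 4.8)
The plan is to reduce the general statement to the connected case established by Cohn and Lempel, using an induction on the number $c(U)$ of connected components together with the free-loop bookkeeping. The base case $c(U)=1$ with no free loops is essentially the classical Cohn-Lempel equality: a connected $2$-in, $2$-out digraph $\vec U$ with Euler circuit $C$ and a circuit partition $P$ of $E(U)$ has $\nu(I_P)=|P|-1$. So the first thing I would do is state the classical equality carefully in the form needed — including the case distinction (follows $C$ / orientation-consistent but does not follow / orientation-inconsistent) that governs whether a row-column pair is deleted, retained, or retained-with-toggled-diagonal — and cite it to \cite{CL} (and Theorem 24 of \cite{A1}), since the proposition statement says this is proven in \cite{T} and we are allowed to assume earlier results.

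\textbf{Next} I would handle free loops. A free loop is its own connected component, contributes a $1\times 1$ empty block to nothing (it has no vertex, hence no row or column in $I(U,C)$ or $I_P$), and by hypothesis each free loop is a circuit of $P$. So if $U$ has $f$ free loops, deleting them removes $f$ from both $c(U)$ and $|P|$ and leaves $I_P$ unchanged; the equality for $U$ is therefore equivalent to the equality for $U$ minus its free loops. This lets me assume $U$ has no free loops.

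\textbf{Then} I would do the inductive step on the number of genuine connected components. If $U = U_1 \sqcup U_2$ with $U_1, U_2$ nonempty, then $C$ restricts to Euler systems $C_1, C_2$, the partition $P$ restricts to partitions $P_1, P_2$ of $E(U_1), E(U_2)$, and the interlacement matrix $I(U,C)$ is block-diagonal with blocks $I(U_1,C_1)$ and $I(U_2,C_2)$ (there is no interlacement across components, as noted in the discussion after Definition \ref{intergraph}). The case-analysis defining $I_P$ is entirely local at each vertex, so $I_P(U,C)$ is block-diagonal with blocks $I_{P_1}(U_1,C_1)$ and $I_{P_2}(U_2,C_2)$; hence $\nu(I_P)=\nu(I_{P_1})+\nu(I_{P_2})$. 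By induction this equals $(|P_1|-c(U_1))+(|P_2|-c(U_2))=|P|-c(U)$, since $|P|=|P_1|+|P_2|$ and $c(U)=c(U_1)+c(U_2)$.

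\textbf{The main obstacle} is not in the combinatorial plumbing above but in whatever is needed to get the connected base case into exactly the stated normal form. The classical Cohn-Lempel result is phrased in terms of orbits of a permutation built from $C$ and $P$, and matching that to the three-way row-operation recipe on $I(U,C)$ — particularly verifying that the orientation-inconsistent case is correctly captured by toggling the diagonal entry from $0$ to $1$, rather than by deleting or retaining the pair — requires a careful translation. Since the excerpt explicitly defers the full argument to \cite{T} and only asks us to record the statement, the honest line is to invoke the connected case as established there (or in \cite{CL, A1}) and supply only the reduction: free-loop removal, then the block-diagonal splitting, then the additivity of nullity. If a self-contained base case were wanted, I would prove it by induction on $|E(U)|$, smoothing $\vec U$ at a vertex according to its $P$-transition and tracking how $|P|$, the number of circuits, and $\nu(I_P)$ each change by a controlled amount under the corresponding local complementation / pivot on the interlacement matrix — but I expect the cleanest exposition is the one that cites the known connected equality and does only the $c(U)>1$ and free-loop reduction here.
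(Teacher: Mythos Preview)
The paper itself gives no proof of this proposition: it explicitly states ``We do not give a proof here; the interested reader can find one in \cite{T}'' and then offers only a worked example. So there is no approach in the paper to compare against, and your decision to supply the easy reduction (free loops contribute equally to $|P|$ and $c(U)$ while leaving $I_P$ untouched; disjoint components give a block-diagonal $I_P$ so nullity is additive) and then cite the connected case is entirely in line with what the paper does.

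Your reduction is correct. The one point worth sharpening is your identification of the base case. You call the connected case ``essentially the classical Cohn-Lempel equality,'' but the classical result of \cite{CL} (and Theorem~24 of \cite{A1,A2}) applies only to \emph{directed} circuit partitions of a $2$-in, $2$-out digraph---i.e.\ the case where $P$ is orientation-consistent at every vertex and $I_P$ is a genuine submatrix of $I(U,C)$. The orientation-inconsistent case, where a diagonal entry is toggled from $0$ to $1$ rather than the row and column being deleted or retained unchanged, is precisely the extension that the word ``extended'' in the proposition's name refers to; it is not covered by \cite{CL} or \cite{A1,A2} and is the content proven in \cite{T} (with related versions in \cite{L,Me,Z}). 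You do flag this in your ``main obstacle'' paragraph, so you are aware of it, but the write-up should not describe the connected base case as classical Cohn-Lempel---it should cite \cite{T} for the connected \emph{extended} equality and reserve \cite{CL,A1,A2} for the orientation-consistent special case.
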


We do not give a proof here; the interested reader can find one in \cite{T}.
Here is an example, though. Consider the complete bipartite graph $K_{4,4}$,
with vertices denoted 1, 2, 3, 4, 5, 6, 7 and 8; vertex $i$ is adjacent to
vertex $j$ if and only if $i\not \equiv j$ (mod 2). Let $C$ contain the Euler
circuit 1234567814725836. If $P$ follows $C$ at vertices 1 and 4, is
orientation-inconsistent at vertices 2, 5 and 7, and is orientation-consistent
but does not follow $C$ at vertices 3, 6 and 8 then
\[
\nu(I_{P}(U,C))=\nu%
\begin{pmatrix}
1 & 1 & 1 & 1 & 0 & 1\\
1 & 0 & 0 & 1 & 0 & 0\\
1 & 0 & 1 & 1 & 0 & 1\\
1 & 1 & 1 & 0 & 0 & 0\\
0 & 0 & 0 & 0 & 1 & 1\\
1 & 0 & 1 & 0 & 1 & 0
\end{pmatrix}
=0,
\]
so $\left\vert P\right\vert =1$. The one circuit in $P$ is the Euler circuit
1278345236741856. The partition $P^{\prime}$ that disagrees with $P$ only by
following $C$ at 2 and 6 corresponds to the matrix $I_{P^{\prime}}(U,C)$
obtained by removing the first and fourth rows and columns of $I_{P}(U,C)$, so
$\nu(I_{P^{\prime}}(U,C))=2$. $P^{\prime}$ contains the circuits 1236, 147658
and 254387.

\section{A bracket polynomial for marked graphs}

\begin{definition}
\label{adjmatrix} If $G$ is a graph with $V(G)=\{v_{1},...,v_{n}\}$ then the
\emph{Boolean adjacency matrix} of $G$ is the $n\times n$ matrix
$\mathcal{A}(G)$ with the following entries in $GF(2)$: $\mathcal{A}%
(G)_{ii}=1$ if and only if $v_{i}$ is looped, and if $i\neq j$ then
$\mathcal{A}(G)_{ij}=1$ if and only if $v_{i}$ is adjacent to $v_{j}$.
\end{definition}

\begin{definition}
A \emph{marked graph} is a graph $G$ given with a partition of $V(G)$ into two subsets.
\end{definition}

In our discussion we call the vertices in one cell of the partition
\textit{unmarked}, and vertices in the other cell \textit{marked} (or
\textit{marked with the letter} $c$).

\begin{definition}
\label{adjT} Suppose $G$ is a marked graph with $V(G)=\{v_{1},...,v_{n}\}$ and
$T\subseteq V(G)$. Let $\Delta_{T}$ be the $n\times n$ diagonal matrix whose
$i^{th}$ diagonal entry is $1$ if and only if $v_{i}\in T$. Then
$\mathcal{A}(G)_{T}$ is the submatrix of $\mathcal{A}(G)+\Delta_{T}$ obtained
by removing the $i^{th}$ row and column if $v_{i}$ is marked and the $i^{th}$
diagonal entry of $\mathcal{A}(G)+\Delta_{T}$ is $0$.
\end{definition}

\begin{definition}
\label{bracket} The \emph{marked-graph bracket polynomial} of a marked graph
$G$ with $\phi$ free loops is
\[
\lbrack G]=d^{\phi}\cdot\sum_{T\subseteq V(G)}A^{n-\left\vert T\right\vert
}B^{\left\vert T\right\vert }d^{\nu(\mathcal{A}(G)_{T})},
\]
where $\nu$ denotes the nullity of matrices with entries in $GF(2)$.
\end{definition}

A simple consequence of Definitions \ref{adjT} and \ref{bracket} is that the
marked-graph bracket is multiplicative on disjoint unions.

\begin{proposition}
\label{product} If $G$ is the union of disjoint subgraphs $G_{1}$ and $G_{2}$
then $[G]=[G_{1}]\cdot\lbrack G_{2}].$
\end{proposition}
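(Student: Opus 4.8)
The plan is to unwind the definitions and check that the sum defining $[G]$ factors over the two components. Let $G_1$ have vertex set $\{v_1,\dots,v_k\}$ and $\phi_1$ free loops, and let $G_2$ have vertex set $\{v_{k+1},\dots,v_n\}$ and $\phi_2$ free loops, so that $G$ has $n$ vertices and $\phi = \phi_1+\phi_2$ free loops. The first observation is that any subset $T\subseteq V(G)$ splits uniquely as $T = T_1\cup T_2$ with $T_i\subseteq V(G_i)$, and conversely every such pair determines a $T$; thus the index set of the sum in Definition \ref{bracket} for $G$ is the Cartesian product of the index sets for $G_1$ and $G_2$. Moreover $\lvert T\rvert = \lvert T_1\rvert + \lvert T_2\rvert$ and $n - \lvert T\rvert = (k-\lvert T_1\rvert) + ((n-k)-\lvert T_2\rvert)$, so the monomial $A^{n-\lvert T\rvert}B^{\lvert T\rvert}$ already factors as the product of the corresponding monomials for $G_1$ and $G_2$.

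Next I would handle the matrix factor. Because $G$ is the disjoint union of $G_1$ and $G_2$, there are no edges between the two vertex blocks, so (ordering the vertices so that those of $G_1$ come first) the Boolean adjacency matrix $\mathcal{A}(G)$ is block-diagonal: $\mathcal{A}(G) = \mathcal{A}(G_1)\oplus\mathcal{A}(G_2)$. The marking of a vertex of $G$ agrees with its marking as a vertex of the component containing it, and the diagonal matrix $\Delta_T$ is likewise block-diagonal, $\Delta_T = \Delta_{T_1}\oplus\Delta_{T_2}$, so $\mathcal{A}(G)+\Delta_T$ is block-diagonal with blocks $\mathcal{A}(G_i)+\Delta_{T_i}$. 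The row/column deletions prescribed in Definition \ref{adjT} are performed independently on each diagonal entry, and since "marked" and "diagonal entry equal to $0$" are both determined within a single block, the deletions for the $G_1$-block depend only on $T_1$ and those for the $G_2$-block only on $T_2$. Hence $\mathcal{A}(G)_T$ is block-diagonal with blocks $\mathcal{A}(G_1)_{T_1}$ and $\mathcal{A}(G_2)_{T_2}$, and nullity over $GF(2)$ is additive over a direct sum of matrices: $\nu(\mathcal{A}(G)_T) = \nu(\mathcal{A}(G_1)_{T_1}) + \nu(\mathcal{A}(G_2)_{T_2})$. Therefore $d^{\nu(\mathcal{A}(G)_T)}$ also factors as the product of the two corresponding $d$-powers.

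Assembling the pieces, the term of $[G]$ indexed by $T$ equals the product of the term of $[G_1]$ indexed by $T_1$ and the term of $[G_2]$ indexed by $T_2$, and summing over the product index set distributes as a product of sums:
\[
\sum_{T\subseteq V(G)} A^{n-\lvert T\rvert}B^{\lvert T\rvert}d^{\nu(\mathcal{A}(G)_T)} = \left(\sum_{T_1\subseteq V(G_1)} A^{k-\lvert T_1\rvert}B^{\lvert T_1\rvert}d^{\nu(\mathcal{A}(G_1)_{T_1})}\right)\left(\sum_{T_2\subseteq V(G_2)} A^{(n-k)-\lvert T_2\rvert}B^{\lvert T_2\rvert}d^{\nu(\mathcal{A}(G_2)_{T_2})}\right).
\]
Multiplying both sides by $d^{\phi} = d^{\phi_1}\cdot d^{\phi_2}$ and invoking Definition \ref{bracket} three times gives $[G] = [G_1]\cdot[G_2]$. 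The only real content is the block-diagonal decomposition and the additivity of $GF(2)$-nullity under direct sums; the slightly delicate point to state carefully is that the row/column deletion in Definition \ref{adjT} respects the block structure, i.e. that deleting rows and columns after forming the direct sum yields the direct sum of the individually deleted blocks. Once that is noted the rest is bookkeeping, so I do not anticipate any genuine obstacle.
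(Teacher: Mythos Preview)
Your proof is correct and follows essentially the same approach as the paper: decompose $T$ into $T_1\cup T_2$, observe that $\phi=\phi_1+\phi_2$, note that $\mathcal{A}(G)_T$ is block-diagonal with blocks $\mathcal{A}(G_i)_{T_i}$, and use additivity of nullity over direct sums. Your version is more detailed---in particular, you spell out carefully that the row/column deletions of Definition~\ref{adjT} respect the block structure---but the underlying argument is identical.
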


\begin{proof}
$G$ has $\phi=\phi_{1}+\phi_{2}$ free loops. Also, if $T\subseteq V(G)$ and
$T_{i}=T\cap V(G_{i})$ then
\[
\mathcal{A}(G)_{T}=
\begin{pmatrix}
\mathcal{A}(G)_{T_{1}} & \mathbf{0}\\
\mathbf{0} & \mathcal{A}(G)_{T_{2}}%
\end{pmatrix}
\]
so $\nu(\mathcal{A}(G)_{T})=\nu(\mathcal{A}(G)_{T_{1}})+\nu(\mathcal{A}%
(G)_{T_{2}})$.
\end{proof}

\bigskip

Another simple property of the marked-graph bracket is that toggling loops in
$G$ has the effect of reversing the roles of $A$ and $B\,$\ in $[G]$.

\begin{proposition}
\label{looptoggle} Let $G+I$ denote the marked graph obtained from $G$ by
toggling all loops in $G$ (i.e., $G+I$ has loops at precisely those vertices
where $G$ does not). Then $[G+I](A,B,d)=[G](B,A,d).$
\end{proposition}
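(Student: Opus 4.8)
The plan is to match the summands of $[G+I]$ with those of $[G]$ under the involution $T\mapsto V(G)\setminus T$ of subsets of the vertex set, with the roles of $A$ and $B$ exchanged. Concretely, I will show that the term indexed by $T$ in the defining sum for $[G+I](A,B,d)$ equals the term indexed by $V(G)\setminus T$ in the defining sum for $[G](B,A,d)$, and then sum over all $T$.

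\medskip

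\noindent\textbf{Key step.} Write $V=V(G)=\{v_1,\dots,v_n\}$. By Definition \ref{adjmatrix}, toggling all loops replaces the Boolean adjacency matrix by $\mathcal{A}(G+I)=\mathcal{A}(G)+I$ over $GF(2)$, where $I=\Delta_V$ is the $n\times n$ identity. Hence for any $T\subseteq V$ we have, over $GF(2)$,
\[
\mathcal{A}(G+I)+\Delta_T=\mathcal{A}(G)+\Delta_V+\Delta_T=\mathcal{A}(G)+\Delta_{V\setminus T},
\]
since the $i$th diagonal entry of $\Delta_V+\Delta_T$ is $1+[v_i\in T]=[v_i\notin T]$. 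The marked graph $G+I$ carries the same vertex partition as $G$, so by Definition \ref{adjT} a row and column is deleted in forming $\mathcal{A}(G+I)_T$ exactly when $v_i$ is marked and the $i$th diagonal entry of $\mathcal{A}(G+I)+\Delta_T$ is $0$; by the displayed identity this is precisely the condition that $v_i$ is marked and the $i$th diagonal entry of $\mathcal{A}(G)+\Delta_{V\setminus T}$ is $0$, which is the deletion rule defining $\mathcal{A}(G)_{V\setminus T}$. Therefore $\mathcal{A}(G+I)_T=\mathcal{A}(G)_{V\setminus T}$, and in particular $\nu(\mathcal{A}(G+I)_T)=\nu(\mathcal{A}(G)_{V\setminus T})$.

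\medskip

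\noindent\textbf{Assembly.} Since $G+I$ has the same free loops as $G$, the prefactor $d^{\phi}$ is unchanged and $|V(G+I)|=n$. Thus
\[
[G+I](A,B,d)=d^{\phi}\sum_{T\subseteq V}A^{\,n-|T|}B^{\,|T|}d^{\,\nu(\mathcal{A}(G)_{V\setminus T})},
\]
and substituting $S=V\setminus T$ (so $|S|=n-|T|$) gives $d^{\phi}\sum_{S\subseteq V}A^{\,|S|}B^{\,n-|S|}d^{\,\nu(\mathcal{A}(G)_S)}=[G](B,A,d)$. I do not expect a genuine obstacle; the only point needing care is the bookkeeping in the key step, namely confirming that the deletion rule of Definition \ref{adjT} applied to $(G+I,T)$ retains exactly the same indices as the rule applied to $(G,V\setminus T)$, which is where the $GF(2)$ identity $I+\Delta_T=\Delta_{V\setminus T}$ does all the work.
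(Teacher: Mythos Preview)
Your proof is correct and follows essentially the same approach as the paper: both rest on the identity $\mathcal{A}(G+I)_T=\mathcal{A}(G)_{V\setminus T}$ and the bijection $T\leftrightarrow V\setminus T$ on subsets, with your version simply spelling out the $GF(2)$ diagonal arithmetic and the deletion-rule check that the paper leaves implicit.
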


\begin{proof}
If $T\subseteq V(G)$ then $\mathcal{A}(G)_{T}=\mathcal{A}(G+I)_{V(G)-T}$, so
the contribution of $T$ to $[G](B,A,d)$ is the same as the contribution of
$V(G)-T$ to $[G+I](A,B,d)$.
\end{proof}

\bigskip

Recall that the Kauffman bracket polynomial of a classical or virtual link
diagram $D$ is a sum indexed by the states of $D$:
\[
\lbrack D]=\sum_{S}A^{a(S)}B^{b(S)}d^{c(S)-1}.
\]
A Kauffman state $S$ corresponds in an obvious way to a partition $P(S)$ of
$E(U)$ into undirected circuits, with the proviso that every free loop of $U$
is included in both $S$ and $P(S)$. If $C$ is a directed Euler system for
$\vec{U}$ then we associate with each Kauffman state $S$ of $D$ the subset
$T(S)\subseteq V(\mathcal{L}(D,C))$ consisting of the vertices corresponding
to crossings where the state $S$ involves the $B$ smoothing. Observe that at
an unmarked positive crossing, the $B$ smoothing corresponds to the transition
that is orientation-inconsistent with $C$, and the $A$ smoothing corresponds
to the transition that is orientation-consistent with $C$ without following
$C$; at an unmarked negative crossing the $B$ smoothing is
orientation-consistent with $C$ without following it, and the $A$ smoothing is
orientation-inconsistent with $C$. At a positive crossing marked $c$ the $B$
smoothing is orientation-inconsistent with $C$ and the $A$ smoothing follows
$C$; at a negative crossing marked $c$ the $B$ smoothing follows $C$ and the
$A$ smoothing is orientation-inconsistent with $C$. Consequently if $S$ is a
Kauffman state of $D$ then $\mathcal{A}(\mathcal{L}(D,C))_{T(S)}$ is the same
as the matrix denoted $I_{P(S)}(U,C)$ in the preceding section. The extended
Cohn-Lempel equality then tells us that $\left|  P(S)\right|  =c(S)=\nu
(\mathcal{A}(\mathcal{L}(D,C))_{T(S)}+c(U).$ As $\mathcal{L}(D,C)$ has
$\phi=c(U)-1$ free loops, we conclude the following.

\begin{proposition}
\label{brackets} Suppose $D$ is a diagram of an oriented classical or virtual
link $L$, $U$ is the universe of $D$ and $C$ is a directed Euler system for
$\vec{U}$. Then the Kauffman bracket polynomial of $D$ and the marked-graph
bracket polynomial of $\mathcal{L}(D,C)$ are the same.
\end{proposition}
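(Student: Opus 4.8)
The plan is to follow the bridge that the paragraph preceding Proposition~\ref{brackets} has already built, and simply match the two sums term by term. Recall the Kauffman bracket is $[D]=\sum_S A^{a(S)}B^{b(S)}d^{c(S)-1}$, while Definition~\ref{bracket} gives
\[
\lbrack\mathcal{L}(D,C)]=d^{\phi}\cdot\sum_{T\subseteq V(\mathcal{L}(D,C))}A^{n-|T|}B^{|T|}d^{\nu(\mathcal{A}(\mathcal{L}(D,C))_T)},
\]
where $n$ is the number of crossings of $D$ and $\phi=c(U)-1$ is the number of free loops of $\mathcal{L}(D,C)$. The key is the bijection $S\mapsto T(S)$ between Kauffman states of $D$ and subsets of $V(\mathcal{L}(D,C))$, sending $S$ to the set of crossings smoothed by $B$. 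Under this bijection $|T(S)|=b(S)$ and $n-|T(S)|=a(S)$, so the monomials $A^{a(S)}B^{b(S)}$ match $A^{n-|T|}B^{|T|}$ exactly; it remains only to reconcile the powers of $d$.

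\textbf{Step 1: Identify the matrices.} First I would verify the claim made just above the statement that $\mathcal{A}(\mathcal{L}(D,C))_{T(S)}=I_{P(S)}(U,C)$, where $P(S)$ is the circuit partition of $E(U)$ determined by $S$ (with every free loop of $U$ included in both $S$ and $P(S)$). This is a case check over the four crossing types (positive/negative $\times$ marked/unmarked): one must confirm that at each crossing, the smoothing chosen by $S$ corresponds, in the language of the extended Cohn--Lempel section, to one of the three behaviors of $P(S)$ relative to $C$ (follows $C$; orientation-consistent but does not follow; orientation-inconsistent), and that this behavior dictates exactly the row/column operation used to form $\mathcal{A}(\mathcal{L}(D,C))_{T(S)}$ from $\mathcal{A}(\mathcal{L}(D,C))+\Delta_{T(S)}$. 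The loop at a negative crossing, the mark $c$, and the choice of $B$ versus $A$ smoothing each contribute a toggle to the relevant diagonal entry, and one must check the three parities compose correctly in all cases — this bookkeeping is the one genuinely delicate point, and it is the main obstacle.

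\textbf{Step 2: Apply the extended Cohn--Lempel equality and count.} Once Step~1 is in hand, the Extended Cohn--Lempel equality gives $\nu(\mathcal{A}(\mathcal{L}(D,C))_{T(S)})=\nu(I_{P(S)}(U,C))=|P(S)|-c(U)$. Now $|P(S)|$ is the number of circuits in the partition $P(S)$, which is precisely $c(S)$, the number of closed curves in the Kauffman state $S$ (the free loops of $U$ are counted in both). Hence $\nu(\mathcal{A}(\mathcal{L}(D,C))_{T(S)})=c(S)-c(U)=c(S)-1-\phi$. Substituting, the $T(S)$-term of $[\mathcal{L}(D,C)]$ equals $d^{\phi}\cdot A^{a(S)}B^{b(S)}d^{c(S)-1-\phi}=A^{a(S)}B^{b(S)}d^{c(S)-1}$, which is exactly the $S$-term of $[D]$. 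Since $S\mapsto T(S)$ is a bijection between the index sets of the two sums, summing yields $[\mathcal{L}(D,C)]=[D]$, completing the proof. (One should also note the degenerate case where $D$ consists entirely of crossing-free components: then $U$ is a disjoint union of $c(U)$ free loops, $\mathcal{L}(D,C)$ is $\phi=c(U)-1$ free loops with no vertices, the only state is the empty one, and both brackets reduce to $d^{c(U)-1}$, consistent with the general argument.)
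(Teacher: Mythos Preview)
Your proposal is correct and follows essentially the same route as the paper: the paper's argument is the paragraph immediately preceding the proposition, which sets up the bijection $S\mapsto T(S)$, carries out the four-case check (positive/negative $\times$ marked/unmarked) to identify $\mathcal{A}(\mathcal{L}(D,C))_{T(S)}$ with $I_{P(S)}(U,C)$, and then invokes the extended Cohn--Lempel equality together with $\phi=c(U)-1$ to match the powers of $d$. Your write-up is a faithful expansion of that paragraph, with the added nicety of checking the degenerate crossing-free case.
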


It follows that $[\mathcal{L}(D,C)]$ is independent of the choice of the
particular directed Euler system $C$. This independence actually follows from
Proposition \ref{ktransform}, Lemma \ref{transform} and a more general result
given in Theorem \ref{marked pivot}.

\begin{definition}
\label{pivot} Suppose $G$ is a marked graph and $v,w$ are adjacent in $G$. The
\emph{marked pivot} $G_{c}^{vw}$ is the marked graph obtained from $G^{vw}$ by
toggling the marks on $v$ and $w$ separately (i.e., removing a mark $c$ where
there is one, and inserting a mark $c$ where there is none), and interchanging
the neighbors of $v$ and $w$.
\end{definition}

Like an ordinary pivot, a marked pivot has no effect on loops or free loops.%

\begin{figure}
[ptb]
\begin{center}
\includegraphics[
trim=0.666277in 7.485691in 0.000000in 0.870727in,
height=1.785in,
width=5.7104in
]%
{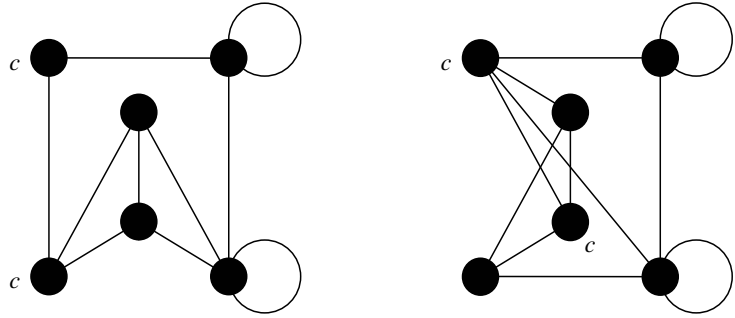}%
\caption{A marked pivot toggles the marks on two adjacent vertices, and
changes some adjacencies involving those vertices and their neighbors.}%
\label{lbracpiv}%
\end{center}
\end{figure}

\begin{theorem}
\label{marked pivot} Suppose $G$ is a marked graph and $v,w$ are adjacent in
$G$. Then $[G]=[G_{c}^{vw}].$
\end{theorem}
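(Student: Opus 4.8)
The plan is to show that the marked-graph bracket is invariant under a marked pivot by comparing, state by state, the matrix-nullity formula of Definition \ref{bracket} applied to $G$ and to $G_c^{vw}$. Since $v$ and $w$ are adjacent, we may assume (by Proposition \ref{looptoggle}, which swaps the roles of $A$ and $B$ and hence is compatible with establishing an $A$-$B$--symmetric identity, and by the loop-independence already noted) that we track contributions indexed by subsets $T\subseteq V(G)=V(G_c^{vw})$; note $G$ and $G_c^{vw}$ have the same vertex set, the same free loops, and the same loops, so $\phi$ is unchanged. The key reduction is to set up a bijection $T\mapsto T'$ on subsets of $V(G)$ preserving $|T|$ (so that the monomials $A^{n-|T|}B^{|T|}$ match) such that
\[
\nu\bigl(\mathcal{A}(G)_T\bigr)=\nu\bigl(\mathcal{A}(G_c^{vw})_{T'}\bigr)
\]
for every $T$. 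Because the marked pivot toggles the marks at $v$ and $w$, the natural candidate is $T'=T\bigtriangleup\{v,w\}$ when exactly one of $v,w$ lies in $T$, and $T'=T$ otherwise — more precisely, one arranges that in $\mathcal{A}(G_c^{vw})+\Delta_{T'}$ the diagonal behaviour at $v$ and $w$ reproduces the diagonal behaviour at $v$ and $w$ in $\mathcal{A}(G)+\Delta_T$ after the mark-toggle, so the same rows and columns get deleted on both sides.

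First I would record explicitly how a marked pivot acts on the Boolean adjacency matrix: it is an ordinary pivot $\mathcal{A}(G)^{vw}$ (a symmetric $GF(2)$ operation on the off-diagonal block structure, leaving diagonal entries fixed), followed by the transposition of the neighbourhoods of $v$ and $w$, followed by toggling the two diagonal entries indexed by $v$ and $w$. The crucial linear-algebra input is the standard "principal pivot" identity: if $M=\begin{pmatrix}0&1\\1&0\end{pmatrix}$ sits as the principal submatrix of $\mathcal{A}(G)+\Delta_T$ on rows/columns $\{v,w\}$ — which is exactly the situation when neither $v$ nor $w$ is marked-with-an-added-diagonal, i.e. when the pivot formula for $G^{vw}$ is literally the Schur-complement rule — then the Schur complement of that $2\times 2$ block computes $\mathcal{A}(G^{vw})_{V(G)\setminus\{v,w\}}$-type data, and nullity is additive: $\nu(\mathcal{A}(G)+\Delta_T)=\nu(\text{Schur complement})+\nu(M)=\nu(\text{Schur complement})$. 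I would then check that deleting versus retaining the $v,w$ rows/columns on the two sides is governed precisely by the mark-toggle, so the matrices $\mathcal{A}(G)_T$ and $\mathcal{A}(G_c^{vw})_{T'}$ are related by exactly such a Schur-complement step (possibly after the harmless simultaneous row/column permutation that interchanges the $v$- and $w$-indices), whence equal nullity.

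The main obstacle is bookkeeping the interaction between the \emph{marks} and the \emph{diagonal shift} $\Delta_T$: Definition \ref{adjT} deletes a marked vertex's row and column precisely when its shifted diagonal entry is $0$, so whether a given marked vertex survives into $\mathcal{A}(G)_T$ depends on $T$, and after a marked pivot the marks at $v,w$ have flipped, changing which subsets cause deletion there. One must organize the four cases according to whether $v$ and $w$ are marked in $G$, and within each case according to $T\cap\{v,w\}$, and verify in each that the chosen $T'$ (with $|T'|=|T|$) makes the surviving submatrices coincide up to the Schur-complement/permutation step above. A secondary subtlety is the neighbour-interchange: it is a symmetric permutation of indices and so does not change nullity, but one must make sure it is applied consistently so that the off-diagonal entries produced by the ordinary pivot $G^{vw}$ match those of $\mathcal{A}(G)$ after the Schur complement — this is exactly the place where the "interchanging the neighbors of $v$ and $w$" clause in Definition \ref{pivot} is needed, paralleling its role in Lemma \ref{transform}. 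Once the case analysis is in place, summing over $T$ gives $[G]=[G_c^{vw}]$.
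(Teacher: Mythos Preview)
Your overall strategy---compare the two sides term by term, showing that for each subset of $V(G)$ the corresponding nullities agree, with the Schur complement (equivalently, elementary row operations over $GF(2)$) doing the work---is exactly the paper's approach. But two features of your plan are off and would cause trouble if you tried to execute it.

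First, the bijection $T\mapsto T'$ is unnecessary and your stated motivation for it is wrong. The paper proves directly that $\nu(\mathcal{A}(G)_T)=\nu(\mathcal{A}(G_c^{vw})_T)$ for \emph{every} $T$, with the same $T$ on both sides. Your reason for introducing $T'$---to arrange that ``the same rows and columns get deleted on both sides''---is not what actually happens, nor what one should aim for. Take for instance $v,w$ both unmarked and unlooped in $G$, with $v,w\notin T$: then in $\mathcal{A}(G)_T$ the $v,w$ rows and columns survive, while in $\mathcal{A}(G_c^{vw})_T$ both are deleted (since $v,w$ are now marked with diagonal entry $0$). Your proposed $T'=T$ here still gives different deletion patterns, and no $|T|$-preserving choice of $T'$ will make the patterns match. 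The point is precisely that the matrices can have \emph{different sizes}: row-reducing $\mathcal{A}(G)_T$ using the invertible $2\times 2$ principal block $\left(\begin{smallmatrix}0&1\\1&0\end{smallmatrix}\right)$ at $\{v,w\}$ clears the rest of those two rows and columns, and what remains is exactly $\mathcal{A}(G_c^{vw})_T$ up to permutation---so the nullities agree despite the size discrepancy. That is the Schur-complement step you allude to, but it connects $\mathcal{A}(G)_T$ to $\mathcal{A}(G_c^{vw})_T$, not to $\mathcal{A}(G_c^{vw})_{T'}$ for some other $T'$.

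Second, the case analysis is organized differently from what you suggest. The paper splits according to the pair of diagonal entries of $\mathcal{A}(G)+\Delta_T$ at $v$ and $w$ (both $1$; one of each; both $0$), and only then subdivides by the marking of $v,w$ where needed. In each case one writes $\mathcal{A}(G)_T$ in block form, adds the first one or two rows to the appropriate row-blocks to produce the pivoted off-diagonal pattern, and then observes that deleting (or not) the $v,w$ rows and columns---as dictated by the toggled marks in $G_c^{vw}$---does not change the nullity. The neighbour-interchange you mention is absorbed as a permutation of rows and columns at the end of each case. There is no induction and no appeal to Proposition~\ref{looptoggle}; the verification is a direct (if somewhat lengthy) matrix computation.
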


\begin{proof}
For convenience we presume $V(G)=\{v_{1},...,v_{n}\}$ with $v_{1}=v$ and
$v_{2}=w$. The proposition is proven by showing that the nullities of
$\mathcal{A}(G)_{T}$ and $\mathcal{A}(G_{c}^{vw})_{T}$ are the same for every
subset $T\subseteq V(G)$.

Suppose the first two diagonal entries of $\mathcal{A}(G)+\Delta_{T}$ are both
1. Then
\[
\mathcal{A}(G)_{T}=\mathcal{A}(G)+\Delta_{T}=
\begin{pmatrix}
1 & 1 & \mathbf{1} & \mathbf{1} & \mathbf{0} & \mathbf{0}\\
1 & 1 & \mathbf{1} & \mathbf{0} & \mathbf{1} & \mathbf{0}\\
\mathbf{1} & \mathbf{1} & M_{11} & M_{12} & M_{13} & M_{14}\\
\mathbf{1} & \mathbf{0} & M_{21} & M_{22} & M_{23} & M_{24}\\
\mathbf{0} & \mathbf{1} & M_{31} & M_{32} & M_{33} & M_{34}\\
\mathbf{0} & \mathbf{0} & M_{41} & M_{42} & M_{43} & M_{44}%
\end{pmatrix}
\]
for appropriate submatrices $M_{ij}$, where a bold numeral indicates a row or
column whose entries are all equal. Adding the first row of $\mathcal{A}%
(G)_{T}$ to every row in the third and fifth blocks of rows (those containing
$M_{11}$ and $M_{31}$), and then adding the second row to every row in the
third and fourth blocks of rows, we conclude that the nullity of
$\mathcal{A}(G)_{T}$ is the same as that of
\[%
\begin{pmatrix}
1 & 1 & \mathbf{1} & \mathbf{1} & \mathbf{0} & \mathbf{0}\\
1 & 1 & \mathbf{1} & \mathbf{0} & \mathbf{1} & \mathbf{0}\\
\mathbf{1} & \mathbf{1} & M_{11} & \bar{M}_{12} & \bar{M}_{13} & M_{14}\\
\mathbf{0} & \mathbf{1} & \bar{M}_{21} & M_{22} & \bar{M}_{23} & M_{24}\\
\mathbf{1} & \mathbf{0} & \bar{M}_{31} & \bar{M}_{32} & M_{33} & M_{34}\\
\mathbf{0} & \mathbf{0} & M_{41} & M_{42} & M_{43} & M_{44}%
\end{pmatrix}
,
\]
which is $\mathcal{A}(G_{c}^{vw})_{T}$ up to permutation of the rows and columns.

Suppose now that one of the first two diagonal entries of $\mathcal{A}%
(G)+\Delta_{T}$ is 1, and the other is 0. If $v$ and $w$ are both unmarked
then
\[
\mathcal{A}(G)_{T}=\mathcal{A}(G)+\Delta_{T}=
\begin{pmatrix}
0 & 1 & \mathbf{1} & \mathbf{1} & \mathbf{0} & \mathbf{0}\\
1 & 1 & \mathbf{1} & \mathbf{0} & \mathbf{1} & \mathbf{0}\\
\mathbf{1} & \mathbf{1} & M_{11} & M_{12} & M_{13} & M_{14}\\
\mathbf{1} & \mathbf{0} & M_{21} & M_{22} & M_{23} & M_{24}\\
\mathbf{0} & \mathbf{1} & M_{31} & M_{32} & M_{33} & M_{34}\\
\mathbf{0} & \mathbf{0} & M_{41} & M_{42} & M_{43} & M_{44}%
\end{pmatrix}
.
\]
Adding the first row of $\mathcal{A}(G)_{T}$ to every row in the third and
fifth blocks of rows, and then adding the second row to every row in the third
and fourth blocks, we conclude that the nullity of $\mathcal{A}(G)_{T}$ is the
same as that of
\[%
\begin{pmatrix}
0 & 1 & \mathbf{1} & \mathbf{1} & \mathbf{0} & \mathbf{0}\\
1 & 1 & \mathbf{1} & \mathbf{0} & \mathbf{1} & \mathbf{0}\\
\mathbf{0} & \mathbf{1} & M_{11} & \bar{M}_{12} & \bar{M}_{13} & M_{14}\\
\mathbf{0} & \mathbf{1} & \bar{M}_{21} & M_{22} & \bar{M}_{23} & M_{24}\\
\mathbf{0} & \mathbf{0} & \bar{M}_{31} & \bar{M}_{32} & M_{33} & M_{34}\\
\mathbf{0} & \mathbf{0} & M_{41} & M_{42} & M_{43} & M_{44}%
\end{pmatrix}
.
\]
Removing the first column and second row of this matrix does not change its
nullity, and yields $\mathcal{A}(G_{c}^{vw})_{T}$ up to permutation of rows
and columns. The argument can be reversed if $v$ and $w$ are both marked in
$G$, as they are then both unmarked in $G_{c}^{vw}$.

Also, if precisely one of $v,w$ is marked in $G$ then the marked one will
correspond to the diagonal entry 1 in one of $G,G_{c}^{vw}$ and the diagonal
entry 0 in the other of $G,G_{c}^{vw}$. The argument just given applies again,
possibly with the roles of $G$ and $G_{c}^{vw}$ reversed.

Suppose now that the first two diagonal entries of $\mathcal{A}(G)+\Delta_{T}$
are both 0, and suppose that precisely one of $v,w$ ($v$, say) is marked in
$G$. Then
\[
\mathcal{A}(G)+\Delta_{T}=
\begin{pmatrix}
0 & 1 & \mathbf{1} & \mathbf{1} & \mathbf{0} & \mathbf{0}\\
1 & 0 & \mathbf{1} & \mathbf{0} & \mathbf{1} & \mathbf{0}\\
\mathbf{1} & \mathbf{1} & M_{11} & M_{12} & M_{13} & M_{14}\\
\mathbf{1} & \mathbf{0} & M_{21} & M_{22} & M_{23} & M_{24}\\
\mathbf{0} & \mathbf{1} & M_{31} & M_{32} & M_{33} & M_{34}\\
\mathbf{0} & \mathbf{0} & M_{41} & M_{42} & M_{43} & M_{44}%
\end{pmatrix}
\]
and
\[
\mathcal{A}(G)_{T}=
\begin{pmatrix}
0 & \mathbf{1} & \mathbf{0} & \mathbf{1} & \mathbf{0}\\
\mathbf{1} & M_{11} & M_{12} & M_{13} & M_{14}\\
\mathbf{0} & M_{21} & M_{22} & M_{23} & M_{24}\\
\mathbf{1} & M_{31} & M_{32} & M_{33} & M_{34}\\
\mathbf{0} & M_{41} & M_{42} & M_{43} & M_{44}%
\end{pmatrix}
\]
for appropriate submatrices $M_{ij}$. Adding the first row of $\mathcal{A}%
(G)_{T}$ to every row in the second and third blocks of rows, and then adding
the first column to every column in the second and third blocks of columns, we
conclude that the nullity of $\mathcal{A}(G)_{T}$ is the same as that of
\[%
\begin{pmatrix}
0 & \mathbf{1} & \mathbf{0} & \mathbf{1} & \mathbf{0}\\
\mathbf{1} & M_{11} & \bar{M}_{12} & \bar{M}_{13} & M_{14}\\
\mathbf{0} & \bar{M}_{21} & M_{22} & \bar{M}_{23} & M_{24}\\
\mathbf{1} & \bar{M}_{31} & \bar{M}_{32} & M_{33} & M_{34}\\
\mathbf{0} & M_{41} & M_{42} & M_{43} & M_{44}%
\end{pmatrix}
,
\]
which is $\mathcal{A}(G_{c}^{vw})_{T}$ up to a permutation of the rows and columns.

Suppose now that the first two diagonal entries of $\mathcal{A}(G)+\Delta_{T}$
are both 0 and that $v$ and $w$ are both unmarked in $G$. Then
\[
\mathcal{A}(G)+\Delta_{T}=\mathcal{A}(G)_{T}=
\begin{pmatrix}
0 & 1 & \mathbf{1} & \mathbf{1} & \mathbf{0} & \mathbf{0}\\
1 & 0 & \mathbf{1} & \mathbf{0} & \mathbf{1} & \mathbf{0}\\
\mathbf{1} & \mathbf{1} & M_{11} & M_{12} & M_{13} & M_{14}\\
\mathbf{1} & \mathbf{0} & M_{21} & M_{22} & M_{23} & M_{24}\\
\mathbf{0} & \mathbf{1} & M_{31} & M_{32} & M_{33} & M_{34}\\
\mathbf{0} & \mathbf{0} & M_{41} & M_{42} & M_{43} & M_{44}%
\end{pmatrix}
.
\]
Adding the first row of $\mathcal{A}(G)_{T}$ to every row in the third and
fifth blocks of rows, and then adding the second row to every row in the third
and fourth blocks, we conclude that the nullity of $\mathcal{A}(G)_{T}$ is the
same as that of
\[%
\begin{pmatrix}
0 & 1 & \mathbf{1} & \mathbf{1} & \mathbf{0} & \mathbf{0}\\
1 & 0 & \mathbf{1} & \mathbf{0} & \mathbf{1} & \mathbf{0}\\
\mathbf{0} & \mathbf{0} & M_{11} & \bar{M}_{12} & \bar{M}_{13} & M_{14}\\
\mathbf{0} & \mathbf{0} & \bar{M}_{21} & M_{22} & \bar{M}_{23} & M_{24}\\
\mathbf{0} & \mathbf{0} & \bar{M}_{31} & \bar{M}_{32} & M_{33} & M_{34}\\
\mathbf{0} & \mathbf{0} & M_{41} & M_{42} & M_{43} & M_{44}%
\end{pmatrix}
.
\]
Removing the first two rows and columns of this matrix does not change the
nullity, and yields $\mathcal{A}(G_{c}^{vw})_{T}$, up to a permutation of the
rows and columns.

If the first two diagonal entries of $\mathcal{A}(G)+\Delta_{T}$ are both 0
and $v$ and $w$ are both marked the same argument applies, with the roles of
$G$ and $G_{c}^{vw}$ reversed.
\end{proof}

\bigskip

Theorem \ref{marked pivot} will be helpful in the balance of the paper because
it allows us to focus on a limited number of special cases. For instance, we
will see later that when applied to different configurations of marked
vertices, the knot-theoretic Reidemeister moves give rise to different
marked-graph Reidemeister moves. Theorem \ref{marked pivot} allows us to
ignore some of those configurations (e.g., ones involving adjacent marked vertices).

\section{A recursion for the marked-graph bracket}

In this section we present a recursive description of the marked-graph bracket
polynomial. The recursion involves formulas that are only valid when certain
vertices are without marked neighbors, so an arbitrary marked graph must be
\textquotedblleft prepared\textquotedblright\ for the recursion by using
marked pivots to eliminate adjacencies between marked vertices.

\bigskip%

\begin{figure}
[tbh]
\begin{center}
\includegraphics[
trim=1.071155in 5.216878in 0.000000in 0.805476in,
height=3.531in,
width=5.4042in
]%
{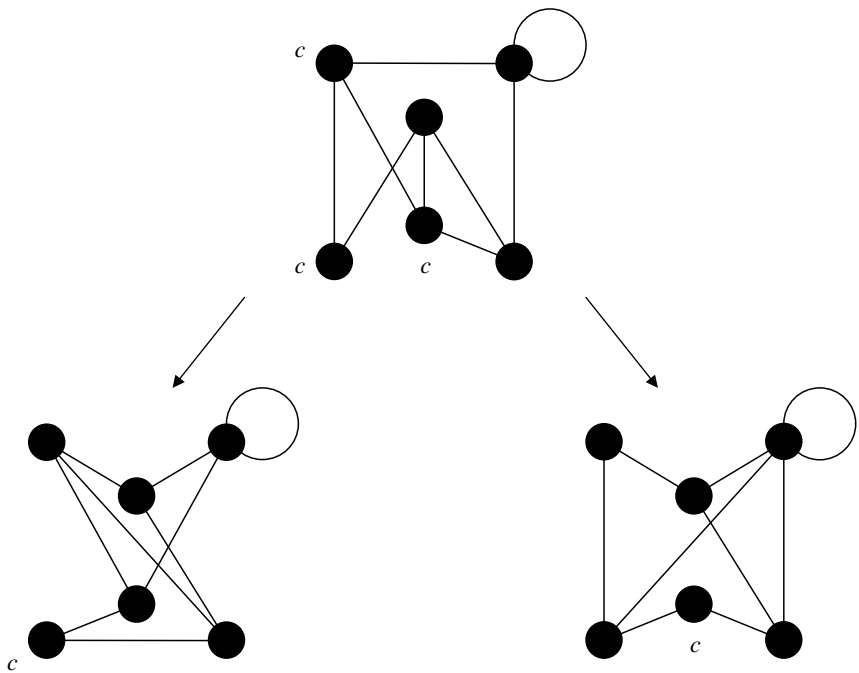}%
\caption{Marked pivots can be used to remove marks on adjacent marked
vertices. Different marked graphs may result, but they will all have the same
bracket polynomial.}%
\label{lbracl14b}%
\end{center}
\end{figure}

\begin{definition}
If $v$ is a vertex of a (marked) graph $G$ then the \emph{local complement}
$G^{v}$ is obtained from $G$ by toggling every edge incident only on
neighbor(s) of $v$.
\end{definition}

Recall that $v$ itself is not included among its neighbors, so edges incident
on $v$ in $G$ are preserved in $G^{v}$. We should also point out that there is
a different use of the term ``local complement,'' which is restricted to
simple graphs and does not involve loops \cite{Bc, B}; we follow \cite{A1, A2,
A} instead, and toggle both loops and non-loop edges incident on neighbors of
$v$ when constructing $G^{v}$.%

\begin{figure}
[ptb]
\begin{center}
\includegraphics[
trim=0.668751in 7.024655in 0.000000in 1.342461in,
height=1.7746in,
width=5.7095in
]%
{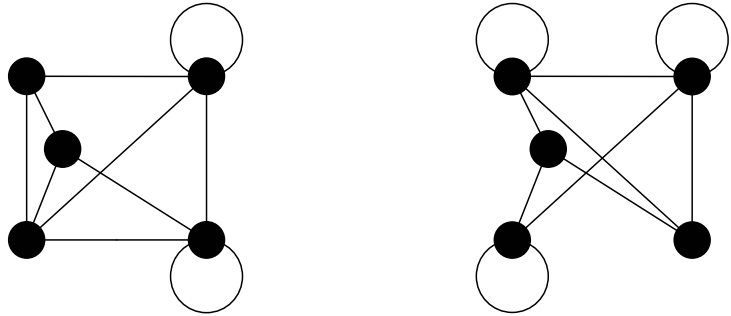}%
\caption{Local complementation at a vertex toggles loops and non-loop edges
that involve only neighbor(s) of that vertex.}%
\label{lbraclc}%
\end{center}
\end{figure}

\begin{theorem}
\label{recursion} The marked-graph bracket polynomial of a marked graph $G$
can be calculated recursively using the following steps.

(a) If $v$ and $w$ are marked neighbors, replace $G$ with $G_{c}^{vw}$.

(b) Suppose $v$ is unlooped and marked, and no neighbor of $v$ is marked.
Then
\[
\lbrack G]=A[G-v]+B[G^{v}-v],
\]
where $G-v$ is obtained from $G$ by removing $v$ and every edge incident on
$v$.

(c) Suppose $v$ is looped and marked, and no neighbor of $v$ is marked. Then
\[
\lbrack G]=B[G-v]+A[G^{v}-v].
\]

(d) If $v$ is looped and no neighbor of $v$ is marked, then
\[
\lbrack G]=A^{-1}B[G-\{v,v\}]+(A-A^{-1}B^{2})[G^{v}-v].
\]
Here $G-\{v,v\}$ is obtained from $G$ by removing the loop at $v$.

(e) Let $v$ and $w$ be adjacent, unlooped, unmarked vertices. If no neighbor
of $v$ is marked then
\[
\lbrack G]=A^{2}[G^{vw}-v-w]+AB[(G^{vw})^{v}-v-w]+B[G^{v}-v].
\]

(f) Suppose $G$ is a graph whose edges are all loops. If $G$ has $\phi$ free
loops, $m_{1}$ unlooped marked vertices, $m_{2}$ looped marked vertices,
$n_{1}$ unlooped unmarked vertices and $n_{2}$ looped unmarked vertices then
\[
\lbrack G]=d^{\phi}(A+B)^{m_{1}+m_{2}}(Ad+B)^{n_{1}}(A+Bd)^{n_{2}}\text{.}%
\]

\end{theorem}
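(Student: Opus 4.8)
The plan is to dispatch parts (a) and (f) directly and to reduce (b)--(e) to a single ``dictionary'' that translates symmetric $GF(2)$ Gaussian elimination into the graph operations $G\mapsto G-v$, $G\mapsto G^{v}$ and $G\mapsto G^{vw}$. Part (a) is exactly Theorem \ref{marked pivot}. For (f), if every edge of $G$ is a loop then $\mathcal{A}(G)$ is diagonal, so $\mathcal{A}(G)_{T}$ is diagonal for every $T\subseteq V(G)$ and $\nu(\mathcal{A}(G)_{T})$ is its number of zero diagonal entries. A marked vertex is either deleted from $\mathcal{A}(G)_{T}$ (this happens exactly when it is unlooped and outside $T$, or looped and inside $T$) or contributes a $1$ on the diagonal, so the zero diagonal entries belong precisely to the unmarked vertices that are looped and in $T$ or unlooped and not in $T$. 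Hence the state sum of Definition \ref{bracket}, together with the $d^{\phi}$ coming from the free loops, factors as a product over $V(G)$ in which each unmarked unlooped vertex contributes $Ad+B$, each unmarked looped vertex contributes $A+Bd$, and each marked vertex contributes $A+B$; this is the asserted formula.

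Next I would record the dictionary. Suppose $v$ has no marked neighbor and the diagonal entry of $\mathcal{A}(G)+\Delta_{T}$ at $v$ equals $1$, so $v$ is retained in $\mathcal{A}(G)_{T}$. The symmetric elimination clearing the row and column of $v$ against this $1$ turns $\mathcal{A}(G)_{T}$ into $[1]\oplus\mathcal{A}(G^{v}-v)_{T\setminus\{v\}}$: the Schur complement that appears has off-diagonal entries $\mathcal{A}(G)_{xy}+\mathcal{A}(G)_{xv}\mathcal{A}(G)_{vy}$ and diagonal entries $\mathcal{A}(G)_{xx}+(\Delta_{T})_{xx}+\mathcal{A}(G)_{xv}$, which are precisely the entries of $\mathcal{A}(G^{v})+\Delta_{T}$, and since $\mathcal{A}(G)_{xv}=0$ for every marked $x$ the row/column-deletion rule of Definition \ref{adjT} produces the same submatrix on both sides. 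The analogous elimination against the invertible $2\times2$ block occupying the rows and columns of two adjacent unlooped vertices $v,w$ (with $v$ still having no marked neighbor) produces $\mathcal{A}(G^{vw}-v-w)_{T'}$ when that block is $\left(\begin{smallmatrix}0&1\\1&0\end{smallmatrix}\right)$ and $\mathcal{A}((G^{vw})^{v}-v-w)_{T'}$ when it is $\left(\begin{smallmatrix}0&1\\1&1\end{smallmatrix}\right)$.

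With the dictionary in hand, each of (b)--(e) follows by splitting the state sum according to the intersection of $T$ with $\{v\}$ (for (b), (c), (d)) or with $\{v,w\}$ (for (e)) and reading off from Definition \ref{adjT} which of $v,w$ is deleted and which is retained. For (b), $v$ is unlooped and marked, hence deleted exactly when $v\notin T$ — where $\mathcal{A}(G)_{T}=\mathcal{A}(G-v)_{T}$ — and retained with diagonal $1$ when $v\in T$ — where the dictionary gives $\nu(\mathcal{A}(G)_{T})=\nu(\mathcal{A}(G^{v}-v)_{T\setminus\{v\}})$; peeling a factor $A$ off the first group of states and a factor $B$ off the second gives $[G]=A[G-v]+B[G^{v}-v]$, and (c) is the same computation with the two cases interchanged because $v$ is looped. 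For (d) I would first settle the case $v$ marked by applying (c) to $[G]$ and (b) to $[G-\{v,v\}]$ and substituting; when $v$ is unmarked it is always retained, the matrices $\mathcal{A}(G)_{T}$ and $\mathcal{A}(G-\{v,v\})_{T}$ arising in the split involve only two distinct nullities — $a(T')=\nu(\mathcal{A}(G^{v}-v)_{T'})$ when $v$'s diagonal is $1$ and a common value $b(T')$ when it is $0$ — and comparing the coefficients of $A^{n-|T'|}B^{|T'|}d^{a(T')}$ and $A^{n-1-|T'|}B^{|T'|+1}d^{b(T')}$ verifies $[G]=A^{-1}B[G-\{v,v\}]+(A-A^{-1}B^{2})[G^{v}-v]$. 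For (e) I split by $T\cap\{v,w\}$: in the two cases with $v\notin T$ the $2\times2$ dictionary identifies $\nu(\mathcal{A}(G)_{T})$ with the nullity of $\mathcal{A}(G^{vw}-v-w)_{T'}$ when $w\notin T$ and of $\mathcal{A}((G^{vw})^{v}-v-w)_{T'}$ when $w\in T$, and in the two cases with $v\in T$ the $1\times1$ dictionary gives $\nu(\mathcal{A}(G^{v}-v)_{T\setminus\{v\}})$; collecting the three resulting groups of states with weights $A^{2}$, $AB$ and $B$ yields the recursion.

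The hard part will be the bookkeeping around the dictionary: checking that local complementation and pivoting, carried out on Boolean adjacency matrices, genuinely commute with the row/column-deletion rule of Definition \ref{adjT} — this is precisely where the ``no marked neighbor'' hypotheses are indispensable — and that the powers of $A$ and $B$ line up after one factor is peeled off per split. Part (e), with its four sign patterns and the composite operation $(G^{vw})^{v}$, is where this demands the most attention; by contrast the closing polynomial identities in (d) and (e) are routine once the nullities have been correctly named, and one need only note in addition that $G\mapsto G-v$, $G^{v}$, $G^{vw}$ and loop-deletion all preserve the number of free loops, so the factor $d^{\phi}$ is common to every term and may be ignored throughout.
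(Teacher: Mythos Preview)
Your proposal is correct and follows essentially the same approach as the paper: both split the state sum of Definition~\ref{bracket} according to $T\cap\{v\}$ or $T\cap\{v,w\}$ and use symmetric $GF(2)$ elimination (you phrase it as Schur complements, the paper writes out block row operations) to identify the resulting nullities with those of $G-v$, $G^{v}-v$, $G^{vw}-v-w$, and $(G^{vw})^{v}-v-w$, with the ``no marked neighbor'' hypothesis entering exactly where a diagonal entry is toggled. The only organizational differences are that the paper derives (c) from (b) via Proposition~\ref{looptoggle} rather than by reversing the two cases, and treats (d) uniformly in the marked/unmarked status of $v$ rather than splitting as you do; neither changes the substance of the argument.
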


\begin{proof}
Theorem \ref{marked pivot} tells us that an application of part (a) does not
change the marked-graph bracket polynomial, and the formula of part (f)
follows directly from Definition \ref{bracket}. As free loops are not affected
by local complementation, pivots or marked pivots, the corresponding powers of
$d$ can simply be factored out of the formulas of parts (b)-(e); hence it
suffices to verify (b)-(e) for graphs without free loops.

In part (b) $v$ is a marked, unlooped vertex with no marked neighbor. Let
\[
S=\sum_{v\in T\subseteq V(G)}A^{n-\left|  T\right|  }B^{\left|  T\right|
}d^{\nu(\mathcal{A}(G)_{T})}.
\]
If $v\in T\subseteq V(G)$ then using elementary row operations, we see that
\begin{align*}
\nu(\mathcal{A}(G)_{T})  & =\nu\left(
\begin{array}
[c]{ccc}%
1 & \mathbf{1} & \mathbf{0}\\
\mathbf{1} & M_{11} & M_{12}\\
\mathbf{0} & M_{21} & M_{22}%
\end{array}
\right) \\
& =\nu\left(
\begin{array}
[c]{ccc}%
1 & \mathbf{1} & \mathbf{0}\\
\mathbf{0} & \bar{M}_{11} & M_{12}\\
\mathbf{0} & M_{21} & M_{22}%
\end{array}
\right)  =\nu\left(
\begin{array}
[c]{cc}%
\bar{M}_{11} & M_{12}\\
M_{21} & M_{22}%
\end{array}
\right)
\end{align*}
for appropriate matrices $M_{ij}$. As $v$ has no marked neighbor, the toggling
of diagonal entries in $\bar{M}_{11}$ is compatible with Definition
\ref{adjT}, and we conclude that
\[
\nu(\mathcal{A}(G)_{T})=\nu(\mathcal{A}(G^{v}-v)_{T-\{v\}}).
\]
This holds whenever $v\in T$, so $S=B[G^{v}-v]$. As $v$ is marked, Definition
\ref{adjT} has $\mathcal{A}(G)_{T}=\mathcal{A}(G-v)_{T}$ for every $T\subseteq
V(G)$ with $v\not \in T$. It follows that $[G]-S=A[G-v]$.

Part (c) follows from part (b) and Proposition \ref{looptoggle}.

We proceed to consider (d).\ Suppose $G$ has a looped vertex $v$ whose
neighbors are all unmarked. If $v\in T\subseteq V(G)$ then $\mathcal{A}%
(G)_{T}=\mathcal{A}(G-\{v,v\})_{T-\{v\}}$ and $\mathcal{A}(G)_{T-\{v\}}%
=\mathcal{A}(G-\{v,v\})_{T}$; these equalities hold whether or not $v$ is
marked. Let
\[
S=\sum_{v\in T\subseteq V(G)}A^{n-\left|  T\right|  }B^{\left|  T\right|
}d^{\nu(\mathcal{A}(G-\{v,v\})_{T})}.
\]
Then $[G]=AB^{-1}S+A^{-1}B\left(  [G-\{v,v\}]-S\right)  $, so $[G]=A^{-1}%
B[G-\{v,v\}]+(AB^{-1}-A^{-1}B)S$. The nullity argument used in the proof of
(b) shows that $S=B[(G-\{v,v\})^{v}-v]=B[G^{v}-v]$.

Turning to (e), suppose that $v,w\in V(G)$ are adjacent, unlooped, unmarked
vertices, and no neighbor of $v$ is marked. Let
\begin{align*}
S_{1}  & =\sum_{\substack{T\subseteq V(G) \\v\notin T,w\notin T}}A^{n-\left|
T\right|  }B^{\left|  T\right|  }d^{\nu(\mathcal{A}(G)_{T})}\text{ }\\
\text{and }S_{2}  & =\sum_{\substack{T\subseteq V(G) \\v\notin T,w\in
T}}A^{n-\left|  T\right|  }B^{\left|  T\right|  }d^{\nu(\mathcal{A}(G)_{T}%
)}\text{.}%
\end{align*}
As $v$ has no marked neighbor the nullity argument given for part (b) applies;
it tells us that $[G]-S_{1}-S_{2}=B[G^{v}-v]$.

Consider a subset $T\subseteq V(G)$ with $v,w\notin T$. Adding the first row
to those in the third and fifth blocks of rows and adding the second row to
those in the third and fourth blocks of rows, we see that
\begin{align*}
\nu(\mathcal{A}(G)_{T})  & =\nu\left(
\begin{array}
[c]{cccccc}%
0 & 1 & \mathbf{1} & \mathbf{1} & \mathbf{0} & \mathbf{0}\\
1 & 0 & \mathbf{1} & \mathbf{0} & \mathbf{1} & \mathbf{0}\\
\mathbf{1} & \mathbf{1} & M_{11} & M_{12} & M_{13} & M_{14}\\
\mathbf{1} & \mathbf{0} & M_{21} & M_{22} & M_{23} & M_{24}\\
\mathbf{0} & \mathbf{1} & M_{31} & M_{32} & M_{33} & M_{34}\\
\mathbf{0} & \mathbf{0} & M_{41} & M_{42} & M_{43} & M_{44}%
\end{array}
\right) \\
& ~\\
& =\nu\left(
\begin{array}
[c]{cccccc}%
0 & 1 & \mathbf{1} & \mathbf{1} & \mathbf{0} & \mathbf{0}\\
1 & 0 & \mathbf{1} & \mathbf{0} & \mathbf{1} & \mathbf{0}\\
\mathbf{0} & \mathbf{0} & M_{11} & \bar{M}_{12} & \bar{M}_{13} & M_{14}\\
\mathbf{0} & \mathbf{0} & \bar{M}_{21} & M_{22} & \bar{M}_{23} & M_{24}\\
\mathbf{0} & \mathbf{0} & \bar{M}_{31} & \bar{M}_{32} & M_{33} & M_{34}\\
\mathbf{0} & \mathbf{0} & M_{41} & M_{42} & M_{43} & M_{44}%
\end{array}
\right) \\
& ~\\
& =\nu\left(
\begin{array}
[c]{cccc}%
M_{11} & \bar{M}_{12} & \bar{M}_{13} & M_{14}\\
\bar{M}_{21} & M_{22} & \bar{M}_{23} & M_{24}\\
\bar{M}_{31} & \bar{M}_{32} & M_{33} & M_{34}\\
M_{41} & M_{42} & M_{43} & M_{44}%
\end{array}
\right)  =\nu(\mathcal{A}(G^{vw}-v-w)_{T}).
\end{align*}
It follows that $S_{1}=A^{2}[G^{vw}-v-w]$.

Now, consider a subset $T\subseteq V(G)$ with $v\notin T$ and $w\in T$. Adding
the first row to those in the fourth and fifth blocks of rows, and the second
row to those in the third and fourth blocks, we see that
\begin{align*}
\nu(\mathcal{A}(G)_{T})  & =\nu\left(
\begin{array}
[c]{cccccc}%
0 & 1 & \mathbf{1} & \mathbf{1} & \mathbf{0} & \mathbf{0}\\
1 & 1 & \mathbf{1} & \mathbf{0} & \mathbf{1} & \mathbf{0}\\
\mathbf{1} & \mathbf{1} & M_{11} & M_{12} & M_{13} & M_{14}\\
\mathbf{1} & \mathbf{0} & M_{21} & M_{22} & M_{23} & M_{24}\\
\mathbf{0} & \mathbf{1} & M_{31} & M_{32} & M_{33} & M_{34}\\
\mathbf{0} & \mathbf{0} & M_{41} & M_{42} & M_{43} & M_{44}%
\end{array}
\right) \\
& ~\\
& =\nu\left(
\begin{array}
[c]{cccccc}%
0 & 1 & \mathbf{1} & \mathbf{1} & \mathbf{0} & \mathbf{0}\\
1 & 0 & \mathbf{1} & \mathbf{0} & \mathbf{1} & \mathbf{0}\\
\mathbf{0} & \mathbf{0} & \bar{M}_{11} & M_{12} & \bar{M}_{13} & M_{14}\\
\mathbf{0} & \mathbf{0} & M_{21} & \bar{M}_{22} & \bar{M}_{23} & M_{24}\\
\mathbf{0} & \mathbf{0} & \bar{M}_{31} & \bar{M}_{32} & M_{33} & M_{34}\\
\mathbf{0} & \mathbf{0} & M_{41} & M_{42} & M_{43} & M_{44}%
\end{array}
\right) \\
& ~\\
& =\nu\left(
\begin{array}
[c]{cccc}%
\bar{M}_{11} & M_{12} & \bar{M}_{13} & M_{14}\\
M_{21} & \bar{M}_{22} & \bar{M}_{23} & M_{24}\\
\bar{M}_{31} & \bar{M}_{32} & M_{33} & M_{34}\\
M_{41} & M_{42} & M_{43} & M_{44}%
\end{array}
\right)  =\nu(\mathcal{A}((G^{vw})^{v}-v-w)_{T-\{w\}}).
\end{align*}
The last equality is justified because $v$ has no marked neighbors in $G$, so
the toggling of diagonal entries in $M_{11}$ and $M_{22}$ is consistent with
Definition \ref{adjT}. It follows that $S_{2}=AB[(G^{vw})^{v}-v-w].$
\end{proof}

\bigskip

One way to implement Theorem \ref{recursion} is to follow this simple outline:
first use (a) to remove marks on adjacent vertices; then use (b) and (c) to
remove marked vertices; then use (d) to remove loops; and finally use (e) to
remove the remaining edges. Typically, this will not result in the most
efficient recursive calculation of the bracket polynomial of a given marked
graph. For one thing, an implementation of \textit{any} branching algorithm
can be simplified if one combines terms that correspond to isomorphic
structures as they arise during the recursion. Also, in our case it is
obviously convenient to use part (f) when $G$ has only isolated vertices,
rather than first using parts (c) and (d) to deal with those vertices that
happen to be looped. More generally, Proposition \ref{looptoggle} will
sometimes yield a more convenient graph; for instance, if the vertices of $G$
are all looped it is certainly simpler to unloop all the vertices at once
rather than using parts (c) and (d) of Theorem \ref{recursion} repeatedly. In
addition, an efficient computation would utilize Proposition \ref{product}.
The computational cost of an instance of part (b), (c), (d) or (e) varies
according to the structure of the resulting graphs $G^{v}$, $G^{vw}$ and
$(G^{vw})^{v}$; there are generally many choices of vertices at which parts
(b)-(e) might be applied, and it is not easy to guess which of these choices
might be most convenient. Finally, there are other identities which might be
used in particular situations (when two unmarked looped vertices are adjacent,
when an unmarked looped vertex is adjacent to a marked unlooped vertex, etc.)
and might sometimes appear in more efficient computations than an
implementation of Theorem \ref{recursion}. Two of these other identities are
given in Corollary \ref{reverse} and Proposition \ref{old recursion}.

\section{Applying Theorem \ref{recursion} to links}%

\begin{figure}
[ptbh]
\begin{center}
\includegraphics[
trim=0.791866in 4.094698in 0.000000in 0.135884in,
height=4.8801in,
width=5.6204in
]%
{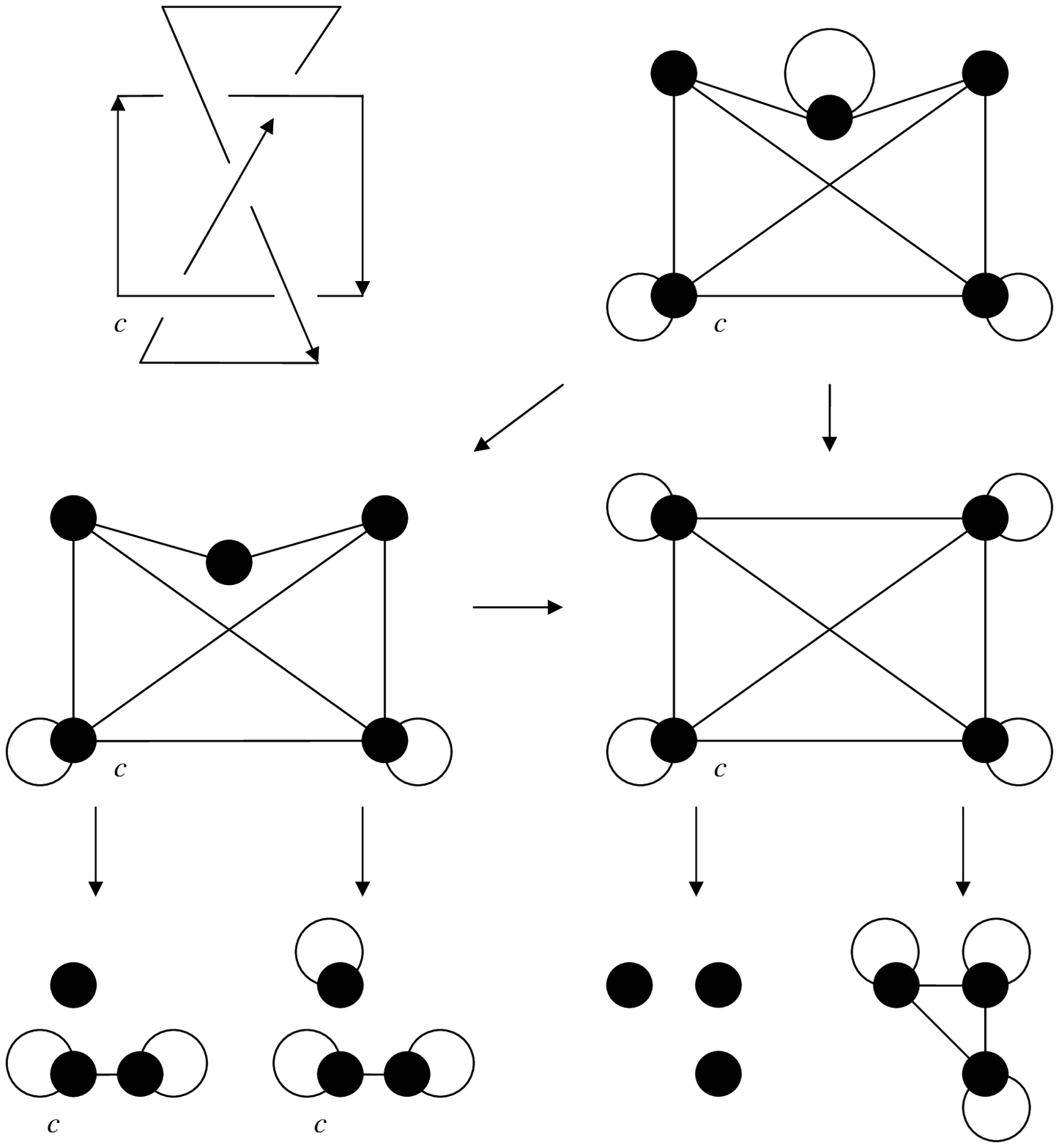}%
\caption{A bracket calculation.}%
\label{lbracfw}%
\end{center}
\end{figure}

Figure \ref{lbracfw} illustrates a calculation of the Kauffman bracket of a
diagram of Whitehead's link using Theorem \ref{recursion}. The marked link
diagram $D$ appears at the top left of the figure, with $\mathcal{L}(D,C)$ at
the top right. The two arrows directed downward from $\mathcal{L}(D,C)$ point
to the two graphs that result when part (d) of Theorem \ref{recursion} is
applied, with $v$ the only vertex of $\mathcal{L}(D,C)$ that is not adjacent
to the marked vertex. Part (e) of Theorem \ref{recursion} is then applied to
the left-hand graph in the middle row of the figure, with $v$ again the only
vertex of $\mathcal{L}(D,C)$ that is not adjacent to the marked vertex. As
indicated in the figure, the second term of the first step and the third term
of the second step refer to the same graph $G^{v}-v$. The bracket polynomial
of this graph is calculated using part (c): $[G^{v}-v]=A[(G^{v}-v)^{y}%
-y]+B[G^{v}-v-y]$, where $y$ is the marked vertex. The result of the
calculation is
\begin{gather*}
\lbrack D]=AB(Ad+B)(A^{2}d+2AB+B^{2}d)+B^{2}(A+Bd)(A^{2}d+2AB+B^{2}d)\\
+A(A(Ad+B)^{3}+B(A^{3}d^{2}+3A^{2}Bd+3AB^{2}+B^{3}d))\\
=d^{3}A^{5}+5A^{4}Bd^{2}+10A^{3}B^{2}d+8A^{2}B^{3}+2A^{2}B^{3}d^{2}%
+5AB^{4}d+B^{5}d^{2}.
\end{gather*}

Another calculation of the same bracket polynomial is indicated in\ Figure
\ref{lbracfw1}. This time the first step is an application of part (c),
$[G]=B[G-v]+A[G^{v}-v]$ with $v$ the marked vertex. The second step is an
application of part (d), and the third step is an application of part (e). The
result of the calculation is
\begin{gather*}
\lbrack D]=B[G-v]+A(Ad+B)(A^{3}d^{2}+3A^{2}Bd+3AB^{2}+B^{3}d)\\
=BA^{-1}B\cdot(A^{2}(Ad+B)(A+Bd)+AB(Ad+B)^{2})\\
+BA^{-1}B\cdot B(A^{3}+3A^{2}Bd+AB^{2}d^{2}+2AB^{2}+B^{3}d)\\
+B(A-A^{-1}B^{2})(A^{3}d^{2}+3A^{2}Bd+3AB^{2}+B^{3}d)\\
+A(Ad+B)(A^{3}d^{2}+3A^{2}Bd+3AB^{2}+B^{3}d).
\end{gather*}
%

\begin{figure}
[ptb]
\begin{center}
\includegraphics[
trim=1.337923in 2.943432in 1.072318in 0.000000in,
height=5.8427in,
width=4.4079in
]%
{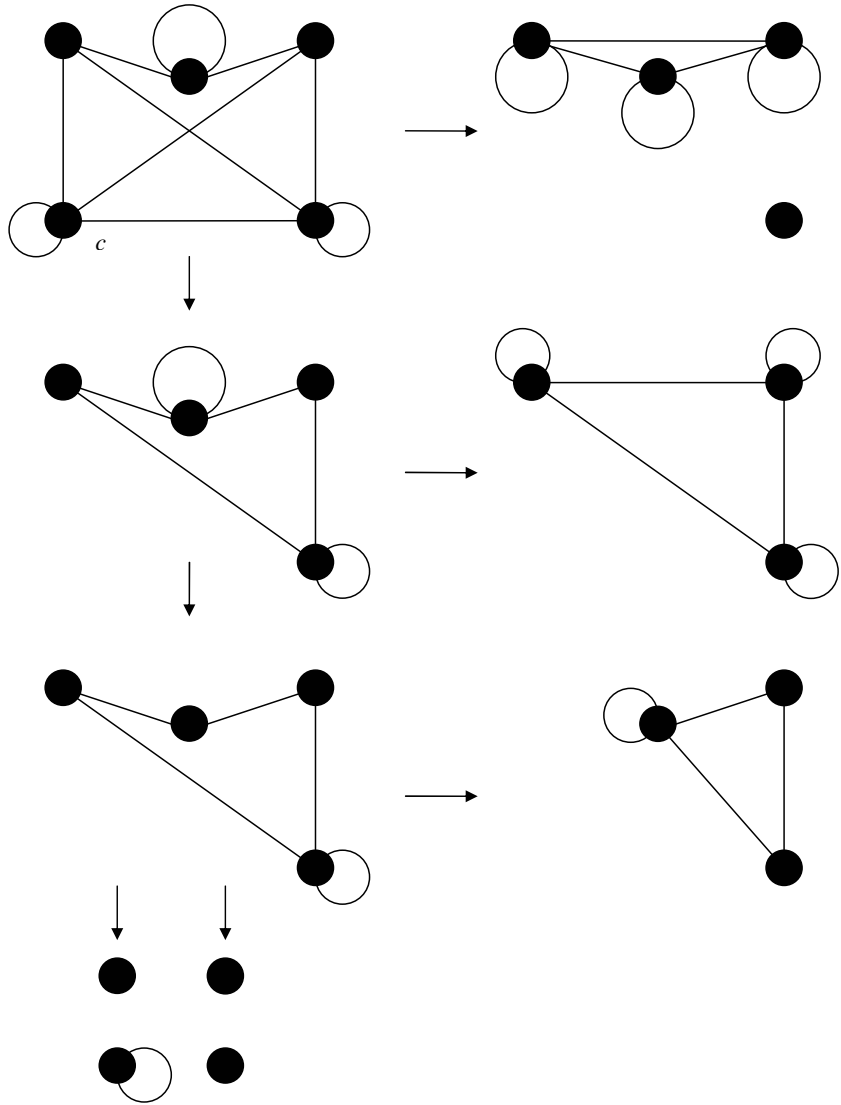}%
\caption{Another bracket calculation.}%
\label{lbracfw1}%
\end{center}
\end{figure}

The recursive calculation of the Kauffman bracket $[D]=$ $[\mathcal{L}(D,C)]$
provided by Theorem \ref{recursion} has not appeared before, but the
individual steps are all related to familiar properties of the Kauffman
bracket or Jones polynomial. The basic recursion of Kauffman's bracket,
$[D]=A[D_{A}]+B[D_{B}]$, is extended to marked graphs in parts (b) and (c) of
Theorem \ref{recursion}. As discussed in Section 4 of \cite{TZ}, the formula
of part (d) of Theorem \ref{recursion} corresponds to the Jones polynomial's
braid-plat formula (denoted $tV_{-1}-V_{1}=t^{3q}(t-1)V_{\infty}$ in
\cite{BK}) and the Kauffman bracket's switching formula (denoted $A\chi
-A^{-1}\bar{\chi}=(A^{2}-A^{-2})\asymp$ in \cite{Kd}), and the formula of part
(e) corresponds to a double use of the basic recursion of the Kauffman
bracket, $[D]=A^{2}[D_{AA}]+AB[D_{AB}]+B[D_{B}]$.

\bigskip

In addition, Theorem \ref{recursion} implies the reversing property of the
Jones polynomial \cite{LM, M}. The proof is simple: consider a link diagram in
which a certain component is incident on only one marked crossing, and apply
Corollary \ref{reverse} to conclude that the Kauffman bracket is not changed
when the orientation of that component is reversed.

\begin{corollary}
\label{reverse}Suppose $v$ is looped and marked in $G$, and no neighbor of $v
$ is marked. Then $[G]=[G^{v}-\{v,v\}]$.
\end{corollary}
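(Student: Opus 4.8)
The plan is to deduce the identity directly from Theorem \ref{recursion}, by applying part (b) to the graph $G^{v}-\{v,v\}$ and comparing the outcome with part (c) applied to $G$ itself. First I would verify that part (b) is applicable to $G^{v}-\{v,v\}$: in that graph $v$ is unlooped (its loop has been deleted) and still marked, and the set of neighbors of $v$ is the same as in $G$, since local complementation at $v$ only toggles edges lying entirely among neighbors of $v$ and never touches edges incident on $v$; hence $v$ still has no marked neighbor in $G^{v}-\{v,v\}$. Part (b) therefore gives
\[
[\,G^{v}-\{v,v\}\,]=A\,[\,(G^{v}-\{v,v\})-v\,]+B\,[\,(G^{v}-\{v,v\})^{v}-v\,].
\]

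Next I would simplify the two graphs appearing on the right. Deleting the vertex $v$ also deletes any loop at $v$, so $(G^{v}-\{v,v\})-v=G^{v}-v$. For the second term, the loop at $v$ is incident on $v$, which is not a neighbor of itself, so local complementation at $v$ neither toggles that loop nor is affected by its presence; consequently $(H-\{v,v\})^{v}=H^{v}-\{v,v\}$ for every marked graph $H$, and since local complementation at $v$ is an involution (the neighbor set of $v$, and hence the collection of edges toggled, is unchanged by the operation) we get $(G^{v}-\{v,v\})^{v}=(G^{v})^{v}-\{v,v\}=G-\{v,v\}$, whence $(G^{v}-\{v,v\})^{v}-v=G-v$. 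Thus
\[
[\,G^{v}-\{v,v\}\,]=A\,[\,G^{v}-v\,]+B\,[\,G-v\,].
\]
On the other hand, since $v$ is looped and marked in $G$ and has no marked neighbor, part (c) of Theorem \ref{recursion} gives $[\,G\,]=B\,[\,G-v\,]+A\,[\,G^{v}-v\,]$. The two right-hand sides coincide, so $[\,G\,]=[\,G^{v}-\{v,v\}\,]$.

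The only delicate point, and the step I would be most careful to state explicitly, is the bookkeeping in the middle paragraph: that local complementation at $v$ commutes with deleting the loop at $v$, and that it is an involution even after the loop at $v$ has been stripped off. Both facts are immediate from the definition of the local complement (which toggles loops and non-loop edges incident \emph{only} on neighbors of $v$, hence never the loop at $v$), but because the graphs here carry loops and marks one must check that no loop or mark is inadvertently altered along the way.
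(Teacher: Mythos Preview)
Your argument is correct and is exactly the approach the paper takes: the paper's proof consists of the single sentence ``This follows immediately from parts (b) and (c) of Theorem \ref{recursion},'' and you have simply spelled out the details of that deduction. Your bookkeeping about local complementation commuting with deletion of the loop at $v$ and being an involution is accurate and is precisely what is needed to match the two expressions.
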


\begin{proof}
This follows immediately from parts (b) and (c) of Theorem \ref{recursion}.
\end{proof}

\bigskip

The recursion of Theorem \ref{recursion} is fundamentally different from the
original recursive description of the classical Jones polynomial \cite{Jo},
which involves reducing the unknotting number (a quantity that is not
generally well defined for virtual links). The basic formula $t^{-1}V_{L^{+}%
}-tV_{L^{-}}=(t^{1/2}-t^{-1/2})V_{L}$ of \cite{Jo} corresponds to one of the
\textquotedblleft other identities\textquotedblright\ mentioned in the
preceding section:

\begin{proposition}
\label{old recursion} If $v\in V(G)$ is looped and marked then
\[
\lbrack G]=AB^{-1}[G-\{v,v\}]+(B-A^{2}B^{-1})[G-v].
\]

\end{proposition}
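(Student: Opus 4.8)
The plan is to prove the identity directly from Definition \ref{bracket}, by splitting the state-sum for $[G]$ according to whether the looped, marked vertex $v$ lies in the summation set $T$. Since free loops contribute a common factor $d^{\phi}$ to each of $[G]$, $[G-\{v,v\}]$ and $[G-v]$, I may assume throughout that $G$ has no free loops. The starting observation is that for any $T\subseteq V(G)$ the loop at $v$ contributes $1$ to the $(v,v)$-entry of $\mathcal{A}(G)$, and inside the sum $\mathcal{A}(G)+\Delta_{T}$ this is indistinguishable from the effect of $\Delta$; moreover $v$ is marked both in $G$ and in $G-\{v,v\}$, so the rule of Definition \ref{adjT} for keeping or deleting the $v$-th row and column depends only on the $(v,v)$-entry of $\mathcal{A}(G)+\Delta_{T}$, which equals the $(v,v)$-entry of $\mathcal{A}(G-\{v,v\})+\Delta_{T\bigtriangleup\{v\}}$, where $\bigtriangleup$ is symmetric difference. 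Hence $\mathcal{A}(G)_{T}=\mathcal{A}(G-\{v,v\})_{T\bigtriangleup\{v\}}$ for every $T$. Note that this step uses nothing about the neighbours of $v$.

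Next I would split the state-sum for $[G]$ into its part over sets $T$ with $v\in T$ and its part over sets with $v\notin T$, re-index each part by $T\mapsto T\bigtriangleup\{v\}$, and carefully track the shift $|T|\mapsto|T|\mp1$ in the exponents of $A$ and $B$. Writing $[H]_{u\in}$ and $[H]_{u\notin}$ for the two corresponding partial state-sums of a marked graph $H$ at a vertex $u$, the identity of the previous paragraph gives $[G]_{v\in}=A^{-1}B\,[G-\{v,v\}]_{v\notin}$ and $[G]_{v\notin}=AB^{-1}\,[G-\{v,v\}]_{v\in}$, and hence $[G]=A^{-1}B\,[G-\{v,v\}]_{v\notin}+AB^{-1}\,[G-\{v,v\}]_{v\in}$.

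Then I would identify the two partial sums for $G-\{v,v\}$ with brackets of actual graphs. In $G-\{v,v\}$ the vertex $v$ is marked and unlooped, so for every $T$ with $v\notin T$ the $(v,v)$-entry of $\mathcal{A}(G-\{v,v\})+\Delta_{T}$ is $0$, Definition \ref{adjT} deletes the $v$-th row and column, and the result is exactly $\mathcal{A}(G-v)_{T}$. Summing over such $T$ (with one more exponent shift, since $G-v$ has one fewer vertex) gives $[G-\{v,v\}]_{v\notin}=A\,[G-v]$, whence $[G-\{v,v\}]_{v\in}=[G-\{v,v\}]-A\,[G-v]$. Substituting into the displayed identity and simplifying yields $[G]=B\,[G-v]+AB^{-1}[G-\{v,v\}]-A^{2}B^{-1}[G-v]=AB^{-1}[G-\{v,v\}]+(B-A^{2}B^{-1})[G-v]$, which is the claim.

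I do not expect a genuine obstacle here; the argument is bookkeeping, and the only delicate point is the faithful application of the row/column deletion convention of Definition \ref{adjT} across the three graphs $G$, $G-\{v,v\}$ and $G-v$, together with the two exponent shifts. It is worth emphasizing that using $[G-v]$ rather than $[G^{v}-v]$ is precisely what allows the argument to go through with no assumption about the marks on the neighbours of $v$; in the special case in which no neighbour of $v$ is marked, the same identity also drops out immediately by eliminating $[G^{v}-v]$ between parts (c) and (d) of Theorem \ref{recursion}.
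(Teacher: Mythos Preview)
Your proposal is correct and follows essentially the same approach as the paper: both arguments rest on the identity $\mathcal{A}(G)_{T}=\mathcal{A}(G-\{v,v\})_{T\bigtriangleup\{v\}}$, split the state-sum according to membership of $v$, and use the markedness of $v$ to identify one partial sum with a multiple of $[G-v]$. The only cosmetic difference is that the paper names the partial sum $S=[G]_{v\in}$ and shows $S=B[G-v]$ directly in $G$, whereas you pass to $G-\{v,v\}$ and show $[G-\{v,v\}]_{v\notin}=A[G-v]$; these are the same observation under the bijection $T\mapsto T\bigtriangleup\{v\}$.
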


\begin{proof}
If $v\in T\subseteq V(G)$ then $\mathcal{A}(G)_{T}=\mathcal{A}%
(G-\{v,v\})_{T-\{v\}}$ and $\mathcal{A}(G)_{T-\{v\}}=\mathcal{A}%
(G-\{v,v\})_{T}$. Consequently, if
\[
S=\sum_{v\in T\subseteq V(G)}A^{n-\left|  T\right|  }B^{\left|  T\right|
}d^{\nu(\mathcal{A}(G)_{T})}%
\]
then $[G-\{v,v\}]=AB^{-1}S+A^{-1}B([G]-S)$, so $[G]=AB^{-1}[G-\{v,v\}]-(A^{2}%
B^{-2}-1)S$. As $v$ is marked, Definition \ref{bracket} tells us that
$S=B[G-v]$.
\end{proof}

\bigskip

Observe that unlike the formulas given in Theorem \ref{recursion}, Proposition
\ref{old recursion} allows $v$ to have marked neighbors.

\section{$\Omega.2$ moves and the reduced bracket}

\begin{definition}
If $G$ is a marked graph then the \emph{reduced marked-graph bracket}
$\left\langle G\right\rangle $ is obtained from $[G]$ by the evaluations
$B\mapsto A^{-1}$ and $d\mapsto-A^{2}-A^{-2}$.
\end{definition}

\begin{proposition}
\label{2movea} Suppose $G$ is a marked graph with two unmarked vertices $v$
and $w$ such that $v$ is looped, $w$ is not, and $v$ and $w$ are adjacent to
the same vertices outside $\{v,w\}$. Then $\left\langle G\right\rangle
=\left\langle G-v-w\right\rangle $.
\end{proposition}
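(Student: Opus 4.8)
The plan is to compute $\langle G\rangle$ directly from Definition \ref{bracket}, splitting the state sum over $T\subseteq V(G)$ according to how $T$ meets the pair $\{v,w\}$, and showing that the four families of terms collapse, after the substitution $B\mapsto A^{-1}$, $d\mapsto -A^2-A^{-2}$, to the state sum for $\langle G-v-w\rangle$. For convenience take $v_1=v$, $v_2=w$, and note the key structural fact: since $v$ is looped and $w$ is not, and $v,w$ have the same neighbors outside $\{v,w\}$, the first two rows of $\mathcal A(G)$ agree in every entry except the diagonal $2\times 2$ block, where they read $\left(\begin{smallmatrix}1 & a\\ a & 0\end{smallmatrix}\right)$ with $a\in\{0,1\}$ the $vw$-adjacency; write $r$ for the common off-block row. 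The heart of the argument is a nullity computation: for each $T'\subseteq V(G)-\{v,w\}$, I want to compare the four matrices $\mathcal A(G)_{T}$ as $T$ ranges over $T'$, $T'\cup\{v\}$, $T'\cup\{w\}$, $T'\cup\{v,w\}$, with the single matrix $\mathcal A(G-v-w)_{T'}$.

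First I would handle the case $a=0$ (so $v,w$ nonadjacent): then adding row $1$ to row $2$, or the column analogue, the two rows differ only in the diagonal block, and one checks by elementary row/column operations over $GF(2)$ that $\nu(\mathcal A(G)_{T'}) = \nu(\mathcal A(G-v-w)_{T'})$, that $\nu(\mathcal A(G)_{T'\cup\{v\}})=\nu(\mathcal A(G)_{T'\cup\{v,w\}})=\nu(\mathcal A(G-v-w)_{T'})$ as well (the looped/unlooped bookkeeping of Definition \ref{adjT} must be tracked here, since $v,w$ are unmarked their rows are deleted exactly when the relevant diagonal entry is $0$), and that $\nu(\mathcal A(G)_{T'\cup\{w\}})=\nu(\mathcal A(G-v-w)_{T'})+1$, the extra nullity coming from the all-zero row that appears. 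Summing the monomials $A^{n-|T|}B^{|T|}d^{\nu}$ over the four cases gives a factor $A^2 + 2AB\,d^{0} + B^2 d^{1}$... — more precisely $A^2 + AB + AB + B^2 d$ — times $d^{\nu(\mathcal A(G-v-w)_{T'})}$, and under $B\mapsto A^{-1}$, $d\mapsto -A^2-A^{-2}$ this prefactor becomes $A^2 + 2 + A^{-2}(-A^2-A^{-2}) = A^2 + 2 - 1 - A^{-4}$, which is not $A^2$ — so I must be more careful with the loop-deletion rule, and in fact the correct grouping pairs $T'\cup\{v\}$ with $T'\cup\{v,w\}$ differently; the clean way is to observe that because $v$ is looped and unmarked, the pair $(T'\cup\{v\},\,T'\cup\{v,w\}\text{-analysis})$ interacts with Definition \ref{adjT} exactly as in the proof of Theorem \ref{recursion}(d), so that $v\in T$ versus $v\notin T$ with the loop toggled give matched matrices. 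Re-summing with that pairing produces the prefactor $(A + Bd) \cdot$(something) or, after the reduced substitution, precisely $1$, leaving $\langle G-v-w\rangle$.

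Given the subtlety just exposed, the honest approach is: (i) use Proposition \ref{looptoggle} or the loop-toggle identity to reduce to a normalized situation, or better, (ii) apply Theorem \ref{recursion} itself — part (d) at the looped vertex $v$ and then part (b)/(e) at $w$ — rather than reopening the raw state sum. Concretely, I would first apply part (d) of Theorem \ref{recursion} at $v$ (legitimate: $v$ is looped and, since $w$ is its only possible marked-status concern and $w$ is unmarked, and all other neighbors of $v$ equal the neighbors of $w$... one needs ``no neighbor of $v$ is marked'', which must be arranged via Theorem \ref{marked pivot} if necessary, but here it is part of being able to invoke (d); if some common neighbor is marked, a marked pivot along an edge among those neighbors fixes it without changing the bracket or the hypothesis), obtaining $[G]=A^{-1}B[G-\{v,v\}]+(A-A^{-1}B^2)[G^v-v]$. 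In $G-\{v,v\}$ the vertices $v$ and $w$ are now both unlooped, unmarked, and \emph{twins} (same neighbors, and nonadjacent or adjacent according to $a$); in $G^v-v$, local complementation at $v$ followed by deleting $v$ leaves $w$ with a modified neighborhood that one computes explicitly. The main obstacle — and where I expect to spend the real effort — is the bookkeeping in $G^v-v$: I must show $[G^v-v]$ relates back to $[G-v-w]$ with the right coefficient so that, after the reduced substitution kills the $(A-A^{-1}B^2) = (A+A^{-1}B)(A-A^{-1}B) \to (A+A^{-1}\cdot A^{-1})(A - A^{-1}A^{-1})$-type factors against the twin-collapse in $[G-\{v,v\}]$, everything telescopes to $\langle G-v-w\rangle$; the twin collapse itself (two unlooped unmarked nonadjacent vertices with identical neighborhoods contribute a factor that becomes $1$ under $B\mapsto A^{-1},d\mapsto -A^2-A^{-2}$) is the graph-bracket shadow of the Reidemeister~II move and is exactly the kind of identity this section is built to prove, so I would isolate it as the one genuine computation and verify it by the four-way split on $T\cap\{v,w\}$ described above, now in the simpler all-unlooped setting where Definition \ref{adjT} deletes both rows whenever $v,w\notin T$ and the nullity comparison is clean: $\nu = \nu(\mathcal A(G'-v-w)_{T'})$ for three of the four cases and $+1$ for one, giving prefactor $A^2+2AB+B^2d \to A^2 + 2 + A^{-2}(-A^2-A^{-2}) = 1 - A^{-4}$... which signals that even in the clean setting the twins must be nonadjacent \emph{and} one checks the fourth case contributes $d^{-1}$-shift not $d^{+1}$, i.e. the correct prefactor is $A^2 + 2AB + B^2 d$ only when... — so the final, load-bearing step is getting this one prefactor right, and I would pin it down by testing against the smallest example (two isolated vertices, $G=$ two unlooped unmarked vertices, $[G]=(Ad+B)^2$, $G-v-w=$ empty, $[G-v-w]=d$, and indeed under the reduced substitution $(Ad+B)^2 = (A(-A^2-A^{-2})+A^{-1})^2 = (-A^3)^2 = A^6$ while $\langle\emptyset\rangle$... ) to fix signs and powers before writing the general nullity argument.
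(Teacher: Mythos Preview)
Your opening strategy---split the state sum over $T\cap\{v,w\}$ and compare nullities---is exactly what the paper does, and it finishes in a few lines once the four nullities are identified correctly. The problem is that your nullity assignments are backwards. Since $v$ and $w$ are \emph{unmarked}, Definition~\ref{adjT} never deletes their rows, so for each $T'\subseteq V(G)-\{v,w\}$ the matrix $\mathcal A(G)_T$ is the full $n\times n$ matrix $\mathcal A(G)+\Delta_T$, and in the nonadjacent case the $\{v,w\}$ diagonal block is
\[
\begin{pmatrix}1&0\\0&0\end{pmatrix},\quad
\begin{pmatrix}0&0\\0&0\end{pmatrix},\quad
\begin{pmatrix}1&0\\0&1\end{pmatrix},\quad
\begin{pmatrix}0&0\\0&1\end{pmatrix}
\]
for $T=T'$, $T'\cup\{v\}$, $T'\cup\{w\}$, $T'\cup\{v,w\}$ respectively (because $v$ is looped, membership in $T$ toggles its diagonal entry to~$0$). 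Thus $T'\cup\{w\}$---not $T'\cup\{v\}$ as you wrote---is the case whose nullity equals $\nu(\mathcal A(G-v-w)_{T'})$, while $T'\cup\{v\}$ (zero diagonal block, identical off-block rows) is the one carrying the extra unit of nullity. With this correction the three terms for $T'$, $T'\cup\{v\}$, $T'\cup\{v,w\}$ have nullities $m$, $m+1$, $m$ and combined prefactor $A^2+ABd+B^2$, which vanishes under $B\mapsto A^{-1}$, $d\mapsto -A^2-A^{-2}$; the surviving term $AB\,d^{\nu(\mathcal A(G-v-w)_{T'})}$ becomes exactly the $T'$-contribution to $\langle G-v-w\rangle$. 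The adjacent case is identical with the roles of $T'\cup\{v\}$ and $T'\cup\{w\}$ interchanged.

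Your fallback through Theorem~\ref{recursion}(d) cannot be salvaged. Part~(d) requires that no neighbor of $v$ be marked, but the proposition imposes no such restriction---and later applications (Propositions~\ref{2moveb} and~\ref{2movec}, and the $\Omega.3$ arguments) invoke it precisely when $v$ and $w$ may have marked neighbors. Your proposed cure, a preliminary marked pivot to clear those marks, preserves the bracket but alters adjacencies among the neighbors of the pivot pair and will in general destroy the hypothesis that $v$ and $w$ have the same neighbors outside $\{v,w\}$. The recursion route therefore both smuggles in an unwarranted hypothesis and, as your own closing paragraphs show, never closes: the ``twin collapse'' you isolate is essentially the proposition again with a loop stripped, and your sanity check on two isolated \emph{unlooped} vertices is testing the wrong statement. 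The fix is simply to redo the four nullities carefully as above; no recursion or pivots are needed.
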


\begin{proof}
Suppose for convenience that $V(G)=\{v_{1},...,v_{n}\}$ with $v_{1}=v$ and
$v_{2}=w$. If $T\subseteq V(G)-\{v,w\}$ then let $T_{1}=T\cup\{v\}$,
$T_{2}=T\cup\{w\}$ and $T_{12}=T\cup\{v,w\}$.

If $v$ and $w$ are not adjacent then using elementary row and column
operations we see that
\begin{align*}
\nu(\mathcal{A}(G)_{T})  & =\nu\left(
\begin{array}
[c]{cccc}%
1 & 0 & \mathbf{1} & \mathbf{0}\\
0 & 0 & \mathbf{1} & \mathbf{0}\\
\mathbf{1} & \mathbf{1} & M_{11} & M_{12}\\
\mathbf{0} & \mathbf{0} & M_{21} & M_{22}%
\end{array}
\right)  =\nu\left(
\begin{array}
[c]{ccc}%
0 & \mathbf{1} & \mathbf{0}\\
\mathbf{1} & M_{11} & M_{12}\\
\mathbf{0} & M_{21} & M_{22}%
\end{array}
\right)  ,\\
\nu(\mathcal{A}(G)_{T_{1}})  & =\nu\left(
\begin{array}
[c]{cccc}%
0 & 0 & \mathbf{1} & \mathbf{0}\\
0 & 0 & \mathbf{1} & \mathbf{0}\\
\mathbf{1} & \mathbf{1} & M_{11} & M_{12}\\
\mathbf{0} & \mathbf{0} & M_{21} & M_{22}%
\end{array}
\right)  =1+\nu\left(
\begin{array}
[c]{ccc}%
0 & \mathbf{1} & \mathbf{0}\\
\mathbf{1} & M_{11} & M_{12}\\
\mathbf{0} & M_{21} & M_{22}%
\end{array}
\right)  ,\\
\nu(\mathcal{A}(G)_{T_{2}})  & =\nu\left(
\begin{array}
[c]{cccc}%
1 & 0 & \mathbf{1} & \mathbf{0}\\
0 & 1 & \mathbf{1} & \mathbf{0}\\
\mathbf{1} & \mathbf{1} & M_{11} & M_{12}\\
\mathbf{0} & \mathbf{0} & M_{21} & M_{22}%
\end{array}
\right)  =\nu\left(
\begin{array}
[c]{cc}%
M_{11} & M_{12}\\
M_{21} & M_{22}%
\end{array}
\right)  \text{ \ and}\\
\nu(\mathcal{A}(G)_{T_{12}})  & =\nu\left(
\begin{array}
[c]{cccc}%
0 & 0 & \mathbf{1} & \mathbf{0}\\
0 & 1 & \mathbf{1} & \mathbf{0}\\
\mathbf{1} & \mathbf{1} & M_{11} & M_{12}\\
\mathbf{0} & \mathbf{0} & M_{21} & M_{22}%
\end{array}
\right)  =\nu\left(
\begin{array}
[c]{ccc}%
0 & \mathbf{1} & \mathbf{0}\\
\mathbf{1} & M_{11} & M_{12}\\
\mathbf{0} & M_{21} & M_{22}%
\end{array}
\right)  .
\end{align*}
It follows that the image of $A^{2}d^{\nu(\mathcal{A}(G)_{T})}+ABd^{\nu
(\mathcal{A}(G)_{T_{1}})}+B^{2}d^{\nu(\mathcal{A}(G)_{T_{12}})}$ under the
evaluations $B\mapsto A^{-1}$ and $d\mapsto-A^{2}-A^{-2}$ is 0. The image of
the remaining term is the contribution of $T$ to the reduced bracket of
$G-v-w$, so $\left\langle G\right\rangle =\left\langle G-v-w\right\rangle $.

If $v$ and $w$ are adjacent then
\begin{align*}
\nu(\mathcal{A}(G)_{T})  & =\nu\left(
\begin{array}
[c]{cccc}%
1 & 1 & \mathbf{1} & \mathbf{0}\\
1 & 0 & \mathbf{1} & \mathbf{0}\\
\mathbf{1} & \mathbf{1} & M_{11} & M_{12}\\
\mathbf{0} & \mathbf{0} & M_{21} & M_{22}%
\end{array}
\right)  =\nu\left(
\begin{array}
[c]{ccc}%
1 & \mathbf{1} & \mathbf{0}\\
\mathbf{1} & M_{11} & M_{12}\\
\mathbf{0} & M_{21} & M_{22}%
\end{array}
\right)  ,\\
\nu(\mathcal{A}(G)_{T_{1}})  & =\nu\left(
\begin{array}
[c]{cccc}%
0 & 1 & \mathbf{1} & \mathbf{0}\\
1 & 0 & \mathbf{1} & \mathbf{0}\\
\mathbf{1} & \mathbf{1} & M_{11} & M_{12}\\
\mathbf{0} & \mathbf{0} & M_{21} & M_{22}%
\end{array}
\right)  =\nu\left(
\begin{array}
[c]{cc}%
M_{11} & M_{12}\\
M_{21} & M_{22}%
\end{array}
\right)  ,\\
\nu(\mathcal{A}(G)_{T_{2}})  & =\nu\left(
\begin{array}
[c]{cccc}%
1 & 1 & \mathbf{1} & \mathbf{0}\\
1 & 1 & \mathbf{1} & \mathbf{0}\\
\mathbf{1} & \mathbf{1} & M_{11} & M_{12}\\
\mathbf{0} & \mathbf{0} & M_{21} & M_{22}%
\end{array}
\right)  =1+\nu\left(
\begin{array}
[c]{ccc}%
1 & \mathbf{1} & \mathbf{0}\\
\mathbf{1} & M_{11} & M_{12}\\
\mathbf{0} & M_{21} & M_{22}%
\end{array}
\right)  ,\text{ \ and}\\
\nu(\mathcal{A}(G)_{T_{12}})  & =\nu\left(
\begin{array}
[c]{cccc}%
0 & 1 & \mathbf{1} & \mathbf{0}\\
1 & 1 & \mathbf{1} & \mathbf{0}\\
\mathbf{1} & \mathbf{1} & M_{11} & M_{12}\\
\mathbf{0} & \mathbf{0} & M_{21} & M_{22}%
\end{array}
\right)  =\nu\left(
\begin{array}
[c]{ccc}%
1 & \mathbf{1} & \mathbf{0}\\
\mathbf{1} & M_{11} & M_{12}\\
\mathbf{0} & M_{21} & M_{22}%
\end{array}
\right)  .
\end{align*}
The image of $A^{2}d^{\nu(\mathcal{A}(G)_{T})}+ABd^{\nu(\mathcal{A}(G)_{T_{2}%
})}+B^{2}d^{\nu(\mathcal{A}(G)_{T_{12}})}$ under the evaluations $B\mapsto
A^{-1}$ and $d\mapsto-A^{2}-A^{-2}$ is 0, and the image of the remaining term
is the contribution of $T$ to the reduced bracket of $G-v-w$.
\end{proof}

\begin{proposition}
\label{2moveb}Suppose $G$ is a marked graph with two adjacent vertices $v$ and
$w$ such that $v$ is looped and marked, $w$ is neither looped nor marked, and
$v$ is the only neighbor of $w$. If $v$ has no neighbor other than $w$ then
$\left\langle G\right\rangle =\left\langle G^{+}-v-w\right\rangle $, where
$G^{+}$ is obtained from $G$ by adjoining a free loop. Otherwise, let $z$ be a
neighbor of $v$ other than $w$. Then $\left\langle G\right\rangle
=\left\langle G_{c}^{vz}-v-w\right\rangle $.
\end{proposition}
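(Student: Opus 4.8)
The plan is to handle the two cases of the statement separately, in each case reducing to a result already in hand.

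\emph{First case: $w$ is the only neighbor of $v$.} Then $v$ and $w$, together with the edge between them and the loop at $v$, constitute a connected component $H$ of $G$, and $G$ is the disjoint union of $H$ and $G-v-w$. I would compute $[H]$ straight from Definition \ref{bracket}: there are four subsets $T\subseteq\{v,w\}$, and since $v$ is marked the row and column of $v$ in $\mathcal{A}(H)+\Delta_{T}$ are deleted exactly when that diagonal entry is $0$, i.e. for $T=\{v\}$ and $T=\{v,w\}$. A brief computation of the four $GF(2)$-nullities gives $[H]=A^{2}+2ABd+B^{2}$, which under $B\mapsto A^{-1}$ and $d\mapsto-A^{2}-A^{-2}$ evaluates to $-A^{2}-A^{-2}$, precisely the reduced value of $d$. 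Because the marked-graph bracket is multiplicative on disjoint unions (Proposition \ref{product}) and the evaluations defining the reduced bracket form a ring homomorphism, $\langle G\rangle=\langle H\rangle\cdot\langle G-v-w\rangle=(-A^{2}-A^{-2})\langle G-v-w\rangle$. On the other hand $G^{+}-v-w$ is obtained from $G-v-w$ by adjoining one free loop, so $[G^{+}-v-w]=d\cdot[G-v-w]$ and hence $\langle G^{+}-v-w\rangle=(-A^{2}-A^{-2})\langle G-v-w\rangle$ as well. The two right-hand sides agree, which is the first claim.

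\emph{Second case: $v$ has a neighbor $z\neq w$.} Here I would use the marked pivot. Since $v$ and $z$ are adjacent, Theorem \ref{marked pivot} gives $[G]=[G_{c}^{vz}]$, hence $\langle G\rangle=\langle G_{c}^{vz}\rangle$; so it suffices to show that $G_{c}^{vz}$ satisfies the hypotheses of Proposition \ref{2movea} with respect to the pair $v,w$, for then $\langle G_{c}^{vz}\rangle=\langle G_{c}^{vz}-v-w\rangle$. A marked pivot does not touch loops or free loops, so $v$ stays looped and $w$ stays unlooped in $G_{c}^{vz}$; the mark toggle makes $v$ unmarked, while $w\notin\{v,z\}$ keeps its (absent) mark, so both $v$ and $w$ are unmarked in $G_{c}^{vz}$. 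It remains to check that $v$ and $w$ are adjacent to the same vertices outside $\{v,w\}$ in $G_{c}^{vz}$, and this is where the hypothesis that $v$ is the only neighbor of $w$ enters. Writing $N_{z}$ for the set of neighbors of $z$ in $G$, the ordinary pivot $G^{vz}$ leaves all adjacencies at $v$ and at $z$ unchanged and, because $w$ is a neighbor of $v$ and of nothing else, toggles the pairs $wy$ for precisely the $y\in N_{z}\setminus\{v\}$; thus in $G^{vz}$ the vertex $w$ is adjacent to $\{v\}\cup(N_{z}\setminus\{v\})$, the vertex $v$ is adjacent to $N_{v}$, and the vertex $z$ is adjacent to $N_{z}$. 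Interchanging the neighbors of $v$ and $z$ then makes \emph{both} $v$ and $w$ adjacent to exactly $\{z\}\cup(N_{z}\setminus\{v\})$, which lies in $V(G)\setminus\{v,w\}$ since $w\notin N_{z}$. Hence Proposition \ref{2movea} applies, and combining with $\langle G\rangle=\langle G_{c}^{vz}\rangle$ finishes the case.

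The one step that really requires care is the adjacency bookkeeping of the second case: tracking how the ordinary pivot $G^{vz}$ and the ensuing interchange of the neighbors of $v$ and $z$ act on the three distinguished vertices $v$, $w$, $z$, and confirming that this turns $v$ and $w$ into vertices with identical neighborhoods off $\{v,w\}$. Everything there hinges on $w$ being pendant at $v$ --- otherwise $w$'s toggled adjacencies would fail to match those of $v$ and Proposition \ref{2movea} could not be invoked. The remaining ingredients --- the four-term evaluation of $[H]$ and the fact that multiplicativity of the bracket descends to the reduced bracket --- are routine.
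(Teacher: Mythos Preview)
Your proof is correct and follows essentially the same route as the paper: the isolated-component case is handled by multiplicativity plus the direct computation $[H]=A^{2}+2ABd+B^{2}$, and the general case is handled by applying the marked pivot $G_{c}^{vz}$ and then invoking Proposition~\ref{2movea}. Your adjacency bookkeeping in the second case is in fact slightly more careful than the paper's: you correctly find that in $G_{c}^{vz}$ the common neighbor set of $v$ and $w$ is $\{z\}\cup(N_{z}\setminus\{v\})$, which does \emph{not} contain $w$, so $v$ and $w$ are nonadjacent there (the paper asserts they are adjacent, but Proposition~\ref{2movea} covers both possibilities, so the conclusion is unaffected).
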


\begin{proof}
If $v$ has no neighbor other than $w$ then Proposition \ref{product} tells us
that $\left\langle G\right\rangle =\left\langle G[\{v,w\}]\right\rangle
\cdot\left\langle G-v-w\right\rangle $, where $G[\{v,w\}]$ is the subgraph of
$G$ induced by $\{v,w\}$. A direct calculation shows that $[G[\{v,w\}]]=A^{2}%
+2ABd+B^{2}$, and hence $\left\langle G[\{v,w\}]\right\rangle =-A^{2}-A^{-2}$.

If $v$ has a neighbor $z$ other than $w$ then Theorem \ref{marked pivot} tells
us that $\left\langle G\right\rangle =\left\langle G_{c}^{vz}\right\rangle $.
In $G_{c}^{vz}$, $v$ and $w$ are adjacent, unmarked vertices, with $v$ looped
and $w$ unlooped. When constructing $G_{c}^{vz}$ we first construct $G^{vz}$,
by toggling all adjacencies between vertices $x,y\not \in \{v,z\}$ such that
$x$ is adjacent to $v$, $y$ is adjacent to $z$ and either $x$ is not adjacent
to $z$ or $y$ is not adjacent to $v$. This includes toggling every adjacency
between $w$ and a neighbor of $z$ other than $v$. As $w$ is adjacent to none
of these vertices in $G$, and not adjacent to any other vertex aside from $v$,
the neighbors of $w$ in $G^{vz}$ are the same as the neighbors of $z$. The
construction of $G_{c}^{vz} $ is completed by interchanging the neighbors of
$v$ and $z$, so the neighbors of $w$ in $G_{c}^{vz}$ are the same as the
neighbors of $v$ in $G_{c}^{vz}$. Proposition \ref{2movea} then tells us that
$\left\langle G_{c}^{vz}\right\rangle =\left\langle G_{c}^{vz}%
-v-w\right\rangle $.
\end{proof}

\begin{proposition}
\label{2movec} Suppose $G$ is a marked graph with two adjacent vertices $v$
and $w$ such that $v$ is looped and marked, $w$ is unlooped and unmarked, and
$v$ and $w$ have precisely the same neighbors outside $\{v,w\}$. If $v$ has no
neighbor other than $w$ then $\left\langle G\right\rangle =\left\langle
G^{+}-v-w\right\rangle $. Otherwise, $\left\langle G\right\rangle
=\left\langle G_{c}^{vz}-v-w\right\rangle $ for any $z\neq w$ that is adjacent
to $v$.
\end{proposition}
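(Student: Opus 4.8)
The statement has the same shape as Proposition~\ref{2moveb}, so the plan is to split into the same two cases and reduce each to something already available. In Case~1, suppose $v$ has no neighbor other than $w$; then the hypothesis that $v$ and $w$ have the same neighbors outside $\{v,w\}$ forces $w$ to have no neighbor other than $v$ either, so $G$ is the disjoint union of the induced subgraph $G[\{v,w\}]$ (a looped, marked vertex $v$ joined by a single edge to an unlooped, unmarked vertex $w$) and $G-v-w$. By Proposition~\ref{product}, $\langle G\rangle=\langle G[\{v,w\}]\rangle\cdot\langle G-v-w\rangle$. A direct evaluation of Definition~\ref{bracket} over the four subsets $T\subseteq\{v,w\}$ gives $[G[\{v,w\}]]=A^{2}+2ABd+B^{2}$, hence $\langle G[\{v,w\}]\rangle=-A^{2}-A^{-2}$; since adjoining a free loop multiplies the bracket by $d$, this factor is exactly what turns $\langle G-v-w\rangle$ into $\langle G^{+}-v-w\rangle$, which settles Case~1. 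This is the same argument as the first case of the proof of Proposition~\ref{2moveb}.

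In Case~2, $v$ has a neighbor $z\neq w$. Since $z$ is then adjacent to $v$, Theorem~\ref{marked pivot} gives $[G]=[G_{c}^{vz}]$, and hence $\langle G\rangle=\langle G_{c}^{vz}\rangle$ because the reduced bracket is obtained from $[\,\cdot\,]$ by a substitution. I would then verify that in $G_{c}^{vz}$ the pair $v,w$ satisfies the hypotheses of Proposition~\ref{2movea}: marked pivots do not affect loops, so $v$ is still looped and $w$ is still unlooped; the marked pivot removes the mark on $v$ and, as $w\notin\{v,z\}$, leaves $w$ unmarked, so $v$ and $w$ are both unmarked in $G_{c}^{vz}$; and $v$ and $w$ have precisely the same neighbors outside $\{v,w\}$. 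Granting this, Proposition~\ref{2movea} gives $\langle G_{c}^{vz}\rangle=\langle G_{c}^{vz}-v-w\rangle$, so that $\langle G\rangle=\langle G_{c}^{vz}-v-w\rangle$, as required, and the argument runs for any neighbor $z\neq w$ of $v$.

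The step I expect to be the main obstacle is the last of those three points: showing that $v$ and $w$ have a common outside-neighbor set in $G_{c}^{vz}$. I would run the same bookkeeping used in Proposition~\ref{2moveb}, following $G_{c}^{vz}$ through its two constituent operations. Write $N=N_G(v)\setminus\{w\}=N_G(w)\setminus\{v\}$ (so $z\in N$, and in particular $w$ is adjacent to $z$ in $G$) and $M=N_G(z)\setminus\{v,w\}$, so that $N_G(v)=\{w\}\cup N$ and $N_G(z)=\{v,w\}\cup M$. In the ordinary pivot $G^{vz}$ the adjacencies at $v$ and at $z$ are unchanged, while the adjacency of $w$ with a vertex $u\notin\{v,z\}$ is toggled precisely when $u$ is adjacent in $G$ to exactly one of $v$ and $z$; substituting the displayed neighbor sets, this collapses to $N_{G^{vz}}(w)=\{v,z\}\cup M$, whereas $N_{G^{vz}}(z)=\{v,w\}\cup M$. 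Interchanging the neighbors of $v$ and $z$ then yields $N_{G_{c}^{vz}}(v)=\{z,w\}\cup M$ and $N_{G_{c}^{vz}}(w)=\{v,z\}\cup M$, so $v$ and $w$ are adjacent in $G_{c}^{vz}$ and share the outside-neighbor set $\{z\}\cup M$, as needed. The one point requiring care --- and the only substantive difference from Proposition~\ref{2moveb} --- is that here $w$ is adjacent to $z$ in $G$ (whereas in Proposition~\ref{2moveb} it is not), which activates a different disjunct in the pivot rule; once that is sorted out the computation is routine, and the write-up can otherwise refer back to the proof of Proposition~\ref{2moveb}.
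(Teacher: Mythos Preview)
Your proposal is correct and follows essentially the same approach as the paper: in Case~1 you reproduce the argument that the paper handles by citing Proposition~\ref{2moveb}, and in Case~2 you apply the marked pivot $G_c^{vz}$ and then verify (via the same neighbor-tracking the paper uses, just written out more explicitly) that $v$ and $w$ satisfy the hypotheses of Proposition~\ref{2movea} in $G_c^{vz}$. The only cosmetic difference is that the paper phrases the key observation as ``$w$ has the same neighbors in $G^{vz}$ as $z$ has'' before the interchange, while you compute the neighbor sets directly; the content is the same.
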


\begin{proof}
If $v$ has no neighbor other than $w$ the preceding proposition applies.

Suppose $z$ is a neighbor of $v$ other than $w$; then $\left\langle
G\right\rangle =\left\langle G_{c}^{vz}\right\rangle $. In $G^{vz}$, the
adjacency between $w$ and every vertex $x\notin\{v,z\}$ that is adjacent to
only one of $v,z$ is toggled; as $w$ has the same neighbors outside $\{v,w\}$
as $v$ has, it follows that $w$ has the same neighbors in $G^{vz}$ as $z$ has.
The neighbors of $v$ and $z$ in $G^{vz}$ are exchanged in $G_{c}^{vz}$, so
Proposition \ref{2movea} applies to $v$ and $w$ in $G_{c}^{vz}$.
\end{proof}

\begin{definition}
A \emph{marked-graph}\textit{\ }$\Omega.2$ \emph{Reidemeister move} is
constructed as follows. Any finite sequence of marked pivots may be applied to
a marked graph $H$; let $G$ denote the result. $G$ is replaced with $G-v-w $,
$G^{+}-v-w$ or $G_{c}^{vz}-v-w$ as in one of the three propositions above. Any
finite sequence of marked pivots may then be applied. The inverse of an
$\Omega.2$ move is also an $\Omega.2$ move, as is the move obtained by first
toggling all loops in $H$, and then toggling all loops in the final graph.
\end{definition}

\bigskip

Proposition \ref{looptoggle} and Theorem \ref{marked pivot} tell us that if
$G^{\prime}$ is obtained from $G$ by using a marked-graph $\Omega.2$ move to
adjoin or remove two vertices, then $\left\langle G\right\rangle =\left\langle
G^{\prime}\right\rangle $.

\bigskip

Recall that if $D$ is a link diagram then an\textit{\ }$\Omega.2$ Reidemeister
move on $D$ involves replacing one of the two configurations pictured on the
left or right of Figure \ref{lbracfi7} with the configuration in the middle.%

\begin{figure}
[ptb]
\begin{center}
\includegraphics[
trim=0.937041in 7.491790in 0.000000in 1.072090in,
height=1.6276in,
width=5.5106in
]%
{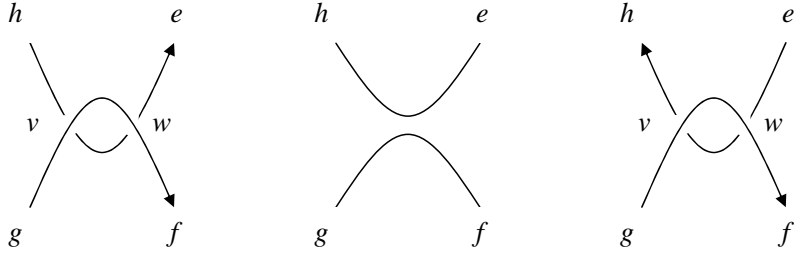}%
\caption{An\textit{\ }$\Omega.2$ Reidemeister move is applied to a link
diagram by replacing the portion pictured on the left or right (or its mirror
image) with the portion pictured in the middle.}%
\label{lbracfi7}%
\end{center}
\end{figure}

\bigskip

\begin{proposition}
\label{2move} If $D^{\prime}$ is obtained by applying an\textit{\ }$\Omega.2$
Reidemeister move to a link diagram $D$ then there are Euler systems $C$ and
$C^{\prime}$ such that $\mathcal{L}(D^{\prime},C^{\prime})$ is obtained from
$\mathcal{L}(D,C)$ by applying a marked-graph $\Omega.2$ move.
\end{proposition}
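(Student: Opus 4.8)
The plan is to construct $C$ and $C'$ by hand so that, outside the small disk $\Delta$ in which the $\Omega.2$ move is performed, they use identical transitions at every crossing, while inside $\Delta$ the two crossings $v,w$ of the bigon realize the hypotheses of Proposition \ref{2movea} or Proposition \ref{2moveb}. Since the inverse of an $\Omega.2$ move is again an $\Omega.2$ move, for diagrams and for marked graphs alike, I may assume $D'$ is obtained from $D$ by inserting a bigon; then $D'$ has two extra crossings $v$ and $w$, which are the only vertices on which $\vec{U}'$ and $\vec{U}$ differ, and because the two crossings of a bigon have opposite sign, exactly one of $v,w$ carries a loop in $\mathcal{L}(D',C')$ no matter how $C'$ is chosen. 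Write $\alpha,\beta$ for the two arcs through $\Delta$; inside $\Delta$ they meet only at $v$ and $w$, and $v,w$ are joined there by two crossing-free edges $e_\alpha\subset\alpha$ and $e_\beta\subset\beta$.

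A directed Euler system of $\vec{U}'$ uses, at each of $v$ and $w$, one of the two transitions consistent with the edge-directions --- the one that follows $\alpha$ and $\beta$ (leaving the vertex unmarked) or the other consistent one (marking the vertex $c$) --- never the inconsistent one. The first step is to decide which of the four resulting choices for the behavior of $C'$ inside $\Delta$ produces a \emph{connected} trail on the component of $U'$ involved: a short trace shows that if $\alpha$ and $\beta$ lie in the same connected component of $U$ then the admissible choices are ``follow, follow'' and ``$c,c$'' and $c(U')=c(U)$, whereas if $\alpha$ and $\beta$ lie in different components of $U$ then the admissible choices are ``follow, $c$'' and ``$c$, follow'' and $c(U')=c(U)-1$. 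In the first case I take ``follow, follow'', so $v$ and $w$ are both unmarked; in the second I put the $c$-transition at the negative crossing, so that the looped crossing is also the marked one. Having fixed $C'$, I take $C$ to be the directed Euler system of $\vec{U}$ obtained by keeping every transition at a crossing of $\vec{U}$ (none lies in $\Delta$) and straightening the two strands across $\Delta$; the case analysis is exactly what makes this a legitimate directed Euler system.

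Finally I would read off the two looped interlacement graphs. Tracing $C'$ shows that $v$ and $w$ are interlaced and have exactly the same neighbors among the remaining vertices, namely the crossings that $C$ meets on both of the two subpaths into which $\Delta$ cuts the Euler circuit through $\alpha$ and $\beta$; in the different-component case this common set is empty, so there $v$ is the only neighbor of $w$ and conversely. Thus, after renaming so that $v$ is the looped vertex, $\mathcal{L}(D',C')$ is in the configuration of Proposition \ref{2movea} (same component) or of Proposition \ref{2moveb} with $v$ the sole neighbor of $w$ (different components). Because $C$ and $C'$ agree outside $\Delta$, the marks, the signs (hence loops), and all interlacements among vertices other than $v,w$ coincide for $\mathcal{L}(D,C)$ and $\mathcal{L}(D',C')-v-w$; and counting free loops --- $c(U')-1$ against $c(U)-1$ --- yields $\mathcal{L}(D,C)=\mathcal{L}(D',C')-v-w$ in the first case and $\mathcal{L}(D,C)=\bigl(\mathcal{L}(D',C')\bigr)^{+}-v-w$ in the second, which are precisely the replacements allowed in a marked-graph $\Omega.2$ move by those two propositions. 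Hence $\mathcal{L}(D',C')$ is obtained from $\mathcal{L}(D,C)$ by a marked-graph $\Omega.2$ move. The remaining variants of the $\Omega.2$ move (the mirror images and the two strand orientations of Figure \ref{lbracfi7}) are handled identically: they only change which of the two consistent transitions is called ``follow'', while the admissible combinations and the interlacement pattern come out as above; should some variant happen to force both of $v,w$ to be marked, one first applies the marked pivot $G_{c}^{vw}$, which is permitted by the definition of a marked-graph $\Omega.2$ move and which, since $v\sim w$ and $v,w$ share no other neighbor here, merely toggles the two marks. I expect the only real work to be the bookkeeping inside $\Delta$: verifying in each variant that the asserted transition combinations give connected directed Euler systems and that the interlacement of $v$ and $w$ with the rest of the diagram is as claimed. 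Everything outside $\Delta$ is transported unchanged, and that is what makes the two graphs coincide after the prescribed deletion.
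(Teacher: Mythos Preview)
Your constructive strategy---start from the smaller diagram, pick $C$ there, and then \emph{choose} the transitions at $v,w$ so as to land directly in the configuration of Proposition~\ref{2movea} or~\ref{2moveb}---is sound and is genuinely different from the paper's argument, which instead begins with an arbitrary Euler system on the larger diagram and uses transpositions to reduce the various mark patterns case by case. Your route is shorter precisely because you never have to confront an inconvenient mark pattern: you manufacture a convenient one.

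That said, one assertion is wrong as stated, and your closing remark that the other strand-orientation variant is ``handled identically'' hides the issue. Whether $v$ and $w$ are interlaced under the ``follow, follow'' choice depends on the relative orientation of the two arcs through $\Delta$. If both arcs traverse the bigon in the same order (say $v$ then $w$) the Euler circuit reads $v,w,\ldots,v,w,\ldots$ and $v,w$ are interlaced; but if they traverse it in opposite orders (one $v\to w$, the other $w\to v$) the circuit reads $v,w,\ldots,w,v,\ldots$ and $v,w$ are \emph{not} interlaced. The paper's proof treats these as the left- and right-hand configurations of Figure~\ref{lbracfi7} and finds $v,w$ adjacent in one and nonadjacent in the other. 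Relatedly, in the opposite-order variant your claim that ``$c,c$'' is admissible in the same-component case is false: both $c$-transitions together trap the two internal edges $e_\alpha,e_\beta$ in a $2$-cycle, so ``follow, follow'' is the only admissible same-component choice there.

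Fortunately none of this damages your conclusion. You use ``follow, follow'' in the same-component case regardless, and Proposition~\ref{2movea} does not require $v$ and $w$ to be adjacent---only that they be unmarked, one looped and one not, with the same neighbours outside $\{v,w\}$, which your trace does establish in both orientation variants. So the fix is simply to drop the blanket claim that $v\sim w$, verify the ``same outside neighbours'' statement for both variants separately, and note that your safety-net marked pivot $G_c^{vw}$ is never actually needed. In the different-component case your analysis is correct as written: there the mixed choices are admissible in both variants, $v$ and $w$ are each other's sole neighbour, and the free-loop bookkeeping lines up with the first clause of Proposition~\ref{2moveb}.
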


\begin{proof}
Suppose first that $D^{\prime}$ is obtained from $D$ by replacing the
configuration on the left in Figure \ref{lbracfi7} with the configuration that
appears in the middle, and $C$ is an Euler system for the directed universe
$\vec{U}$ of $D$. There are four possible arrangements of marks: either of
$v,w$ could be marked, or neither, or both. In every one of these
configurations the vertices of $U$ corresponding to $v$ and $w$ are interlaced
with respect to $C$, so if $w$ is marked we may replace $C$ with $C\ast v\ast
w\ast v$. We proceed under the assumption that $w$ is not marked.

If neither $v$ nor $w$ is marked, the Euler system $C$ must contain a circuit
that leaves $w$ along the edge corresponding to the pictured arc $e$, and
whose next appearance in the pictured portion of $U$ occurs along the edge
corresponding to the pictured arc $g$. Then the crossings $v$ and $w$
correspond to adjacent, unmarked vertices $v,w\in V(U)$ such that such that
$v$ is looped, $w$ is unlooped, and $v$ and $w$ are adjacent to the same
vertices outside $\{v,w\}$. Removing $v$ and $w$ from $\mathcal{L}(D,C)$ is an
instance of Proposition \ref{2movea}, and results in $\mathcal{L}(D^{\prime
},C^{\prime})$, where $C^{\prime}$ is the Euler system for $D^{\prime}$
obtained in the obvious way from $C$.

If $v$ is marked and $w$ is not, $C$ must instead contain a circuit $\gamma$
that leaves the portion of $D$ pictured in Figure \ref{lbracfi7} along $e$ and
re-enters along $h$. The crossings $v$ and $w$ again correspond to adjacent
vertices $v,w\in V(U)$ such that $v$ is looped, $w$ is unlooped, and $v$ and
$w$ are adjacent to the same vertices outside $\{v,w\}$. If $\gamma$ passes
through no crossing of $D$ only once on the way from $e$ to $h$, then $v$ and
$w$ have no neighbor in $\mathcal{L}(D,C)$ except each other. Removing $v$ and
$w$ is then an instance of the first clause of Proposition \ref{2moveb}, and
$\mathcal{L}(D,C)-v-w=\mathcal{L}(D^{\prime},C^{\prime})$ where $C^{\prime}$
is the Euler system for $D^{\prime}$ that contains the $e$-to-$h$ and
$f$-to-$g$ portions of $\gamma$ as separate circuits. If $\gamma$ passes
through a crossing $z$ only once on the way from $e$ to $h$, then Lemma
\ref{transform} tells us that $\mathcal{L}(D,C)_{c}^{vz}=\mathcal{L}(D,C\ast
v\ast z\ast v)$. As $v$ and $w$ are both unmarked in $\mathcal{L}%
(D,C)_{c}^{vz}$, we may refer to the preceding paragraph.

Suppose now that $D^{\prime}$ is obtained from $D$ by replacing the
configuration on the right in Figure \ref{lbracfi7} with the configuration in
the middle. It is impossible for both $v$ and $w$ to be marked, for this would
result in an ``extra'' circuit incident on only $v$ and $w$; as $C$ is an
Euler system, it cannot contain such a circuit.

If neither $v$ nor $w$ is marked, $C$ contains a circuit that leaves $w$ along
the edge corresponding to $f$ and then re-enters the portion of $D$ pictured
in Figure \ref{lbracfi7} at $e$. Then $v$ is looped, $w$ is unlooped, and
$v,w$ are nonadjacent vertices with the same neighbors. Removing $v$ and $w$
from $\mathcal{L}(D,C)$ is an instance of Proposition \ref{2movea}, and the
result is $\mathcal{L}(D^{\prime},C^{\prime})$ where $C^{\prime}$ is the Euler
system obtained from $C$ in the obvious way.

If $v$ is marked and $w$ is not then $v$ is the only neighbor of $w$ in
$\mathcal{L}(D,C)$. As in the third paragraph, we have two cases. If $w$ is
the only neighbor of $v$ in $\mathcal{L}(D,C)$ then we refer to the first
clause of Proposition \ref{2moveb}. $\mathcal{L}(D,C)-v-w=\mathcal{L}%
(D^{\prime},C^{\prime})$ where $C^{\prime}$ is obtained from $C$ by replacing
the circuit $\gamma$ that passes through $v$ and $w$ with two separate
circuits, one including the $h$-to-$e$ portion of $\gamma$ and the other
including the $f$-to-$g$ portion of $\gamma$. If instead $v$ has a neighbor
$z\neq w$ in $\mathcal{L}(D,C)$, then we replace $C$ by $C\ast v\ast z\ast v$
and refer to the preceding paragraph.

If $w$ is marked and $v$ is not then we toggle all loops, apply the preceding
paragraph with the roles of $v$ and $w$ reversed, and then toggle all loops again.
\end{proof}

\section{$\Omega.3$ moves and the reduced bracket}

\begin{proposition}
\label{3movea} Suppose $G$ is a marked graph with three unmarked vertices
$u,v,w$ such that $u,v,w$ are all adjacent to each other, $u$ is looped, $v$
and $w$ are unlooped, and every vertex $x\notin\{u,v,w\}$ is adjacent to
either 0 or precisely two of $u,v,w$. Let $G^{\prime}$ be the graph obtained
from $G$ by removing all three edges $\{u,v\}$, $\{u,w\}$ and $\{v,w\}$. Then
$\left\langle G\right\rangle =\left\langle G^{\prime}\right\rangle $.
\end{proposition}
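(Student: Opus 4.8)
The plan is to compare, for each fixed subset $T\subseteq V(G)-\{u,v,w\}$, the contributions of the eight subsets $T\cup S$ with $S\subseteq\{u,v,w\}$ to $[G]$ and to $[G^{\prime}]$, and then apply the evaluations $B\mapsto A^{-1}$, $d\mapsto-A^{2}-A^{-2}$ that define $\langle\cdot\rangle$. Since $G$ and $G^{\prime}$ differ only in the three edges $\{u,v\},\{u,w\},\{v,w\}$, the matrices $\mathcal{A}(G)_{T^{\prime}}$ and $\mathcal{A}(G^{\prime})_{T^{\prime}}$ coincide for every subset $T^{\prime}$ not of the form $T\cup S$, so those subsets contribute equally and can be ignored. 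Because $u,v,w$ are unmarked, Definition \ref{adjT} keeps their rows and columns in every $\mathcal{A}(G)_{T\cup S}$ and $\mathcal{A}(G^{\prime})_{T\cup S}$; ordering the vertices as $u,v,w$ followed by the (surviving) rest, one has $\mathcal{A}(G)_{T\cup S}=\left(\begin{smallmatrix}B_{S}&R\\ R^{T}&M_{T}\end{smallmatrix}\right)$ and $\mathcal{A}(G^{\prime})_{T\cup S}=\left(\begin{smallmatrix}B_{S}^{\prime}&R\\ R^{T}&M_{T}\end{smallmatrix}\right)$, where $R$ has rows $N_{u},N_{v},N_{w}$ recording the adjacencies of $u,v,w$ to the remaining vertices, $M_{T}$ is the common ``rest'' block, $B_{S}=\left(\begin{smallmatrix}1&1&1\\1&0&1\\1&1&0\end{smallmatrix}\right)+\mathrm{diag}(a,b,c)$ with $(a,b,c)=(\mathbf{1}[u\in S],\mathbf{1}[v\in S],\mathbf{1}[w\in S])$, and $B_{S}^{\prime}=\mathrm{diag}(1+a,b,c)$ is its diagonal part. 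The hypothesis that every vertex outside $\{u,v,w\}$ meets $0$ or exactly $2$ of $u,v,w$ says precisely that $N_{u}+N_{v}+N_{w}=0$ over $GF(2)$, and this relation survives the restriction to surviving vertices.

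Next I would use this relation to decouple row and column $u$: replacing the $u$-row by the sum of the $u$-, $v$- and $w$-rows, and likewise for the $u$-column, annihilates the off-block part of row and column $u$ and turns both $\mathcal{A}(G)_{T\cup S}$ and $\mathcal{A}(G^{\prime})_{T\cup S}$ into the single normal form $\left(\begin{smallmatrix}q&b&c&\mathbf{0}\\ b&b&t&N_{v}\\ c&t&c&N_{w}\\ \mathbf{0}&N_{v}^{T}&N_{w}^{T}&M_{T}\end{smallmatrix}\right)$ with $q=1+a+b+c\bmod 2$, the only difference being $t=1$ for $G$ and $t=0$ for $G^{\prime}$. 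A short Gaussian elimination over $GF(2)$, split into cases on $(q,b,c)$, then evaluates the nullity: when $q=1$, pivot on the $(u,u)$ entry to get $\nu=\nu\!\left(\begin{smallmatrix}0&t+bc&N_{v}\\ t+bc&0&N_{w}\\ N_{v}^{T}&N_{w}^{T}&M_{T}\end{smallmatrix}\right)=:P_{t+bc}$; when $q=0$ and $b=c=0$, row and column $u$ vanish, giving $\nu=1+P_{t}$; when $q=0$ and exactly one of $b,c$ is $1$, pivot on an invertible $2\times 2$ block on $\{u,v\}$ (or $\{u,w\}$) to get $\nu=\nu\!\left(\begin{smallmatrix}0&N_{w}\\ N_{w}^{T}&M_{T}\end{smallmatrix}\right)=:Q_{w}$ (resp.\ $Q_{v}$); and when $q=0$ and $b=c=1$, a similar $2\times 2$ pivot, using $N_{u}=N_{v}+N_{w}$, gives $\nu=\nu\!\left(\begin{smallmatrix}0&N_{u}\\ N_{u}^{T}&M_{T}\end{smallmatrix}\right)=:Q_{u}$. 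The key observation is that $Q_{u},Q_{v},Q_{w}$ and the ``$+1$'' in $1+P_{t}$ are independent of $t$, so for $S=\{v\},\{w\},\{u,v,w\}$ (and inside the $S=\{u\}$ comparison) the nullities for $G$ and $G^{\prime}$ agree; only the $P$-terms genuinely differ, and there $t=1$ versus $t=0$ merely swaps the roles of $P_{0}$ and $P_{1}$.

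Finally I would assemble the sum. Put $x=A^{-2}$; after $B\mapsto A^{-1}$ the contribution of $T\cup S$ to $[G]$ is $A^{n-2|T|}\,x^{|S|}d^{\nu}$, so factoring out the common $A^{n-2|T|}$ and using the nullity table (with $|S|=a+b+c$ and $q=1$ exactly when $a+b+c$ is even), the $G$-sum minus the $G^{\prime}$-sum collapses to $(d^{P_{1}}-d^{P_{0}})(1+xd+x^{2})$. Since $1+A^{-2}d+A^{-4}$ vanishes under $d\mapsto-A^{2}-A^{-2}$ --- equivalently $A^{2}+d+A^{-2}\mapsto 0$, the same elementary cancellation already used in the proof of Proposition \ref{2movea} --- each such difference is $0$; summing over all $T$, together with the equal free-loop factors of $G$ and $G^{\prime}$, yields $\langle G\rangle=\langle G^{\prime}\rangle$. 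I expect the main obstacle to be the case analysis of the second paragraph: tracking which diagonal entries survive in $\mathcal{A}(\cdot)_{T\cup S}$, carrying out the $GF(2)$ eliminations with the block structure, and in particular verifying that the five nullity values take the claimed $t$-independent form --- this is the analogue, with a $3\times 3$ ``core'' rather than the $2\times 2$ cores of Propositions \ref{2movea}--\ref{2movec}, of the nullity computations performed there.
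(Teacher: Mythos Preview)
Your argument is correct and rests on the same ingredients as the paper's proof: group the subsets of $V(G)$ by their intersection $T$ with $V(G)-\{u,v,w\}$, compare the eight contributions of $T\cup S$ for $S\subseteq\{u,v,w\}$, and exploit the $GF(2)$ relation $N_u+N_v+N_w=0$ (which the paper encodes by writing the three off-block rows as $\rho_1,\rho_2,\rho_1+\rho_2$) via elementary row and column operations. One small wording slip: every $T'\subseteq V(G)$ \emph{is} of the form $T\cup S$, so the sentence ``those subsets contribute equally and can be ignored'' is vacuous; what you actually use---and what is true---is that for fixed $T$ the surviving marked vertices, and hence the blocks $R$ and $M_T$, are the same across all eight $S$ and for both $G$ and $G'$.

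The organizational difference is worth noting. The paper handles the eight cases individually and observes a ``three cancel, five match'' pattern: the contributions of $T,\ T\cup\{u\},\ T\cup\{u,v\}$ cancel separately within $\langle G\rangle$ and within $\langle G'\rangle$, while the remaining five match termwise (with the pair $T\cup\{u,w\}$ and $T\cup\{v,w\}$ swapping roles between $G$ and $G'$). You instead first pass to the common normal form
\[
\begin{pmatrix} q & b & c & \mathbf{0}\\ b & b & t & N_v\\ c & t & c & N_w\\ \mathbf{0} & N_v^{T} & N_w^{T} & M_T\end{pmatrix},
\]
so that $G$ and $G'$ differ only in the single bit $t$, and then compute the total difference as $(d^{P_1}-d^{P_0})(1+A^{-2}d+A^{-4})$. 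Your packaging makes the mechanism of cancellation under $d\mapsto -A^{2}-A^{-2}$ completely transparent and avoids matching terms by hand; the paper's version, on the other hand, yields slightly finer information (it shows exactly which individual nullities coincide, not merely that a weighted difference vanishes). Either route is a clean proof.
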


\begin{proof}
Each $T\subseteq V(G)-\{u,v,w\}$ corresponds to eight subsets of $V(G)$, which
make eight contributions to $\left\langle G\right\rangle $ and eight
contributions to $\left\langle G^{\prime}\right\rangle $. In each case, three
of the eight cancel. The proposition is true because the remaining five happen
to equal each other.

Considering $G$ first, observe that the $GF(2)$-nullities of $\mathcal{A}%
(G)_{T}$, $\mathcal{A}(G)_{T\cup\{u\}}$ and $\mathcal{A}(G)_{T\cup\{u,v\}}$
are (respectively)
\begin{align*}
\nu\left(
\begin{array}
[c]{cccc}%
1 & 1 & 1 & \rho_{1}\\
1 & 0 & 1 & \rho_{2}\\
1 & 1 & 0 & \rho_{1}+\rho_{2}\\
\kappa_{1} & \kappa_{2} & \kappa_{1}+\kappa_{2} & M
\end{array}
\right)   & =\nu\left(
\begin{array}
[c]{cccc}%
1 & 0 & 0 & \mathbf{0}\\
0 & 1 & 0 & \rho_{1}+\rho_{2}\\
0 & 0 & 1 & \rho_{1}\\
\mathbf{0} & \kappa_{1} & \kappa_{1}+\kappa_{2} & M
\end{array}
\right)  ,\\
\nu\left(
\begin{array}
[c]{cccc}%
0 & 1 & 1 & \rho_{1}\\
1 & 0 & 1 & \rho_{2}\\
1 & 1 & 0 & \rho_{1}+\rho_{2}\\
\kappa_{1} & \kappa_{2} & \kappa_{1}+\kappa_{2} & M
\end{array}
\right)   & =\nu\left(
\begin{array}
[c]{cccc}%
0 & 0 & 0 & \mathbf{0}\\
0 & 1 & 0 & \rho_{1}+\rho_{2}\\
0 & 0 & 1 & \rho_{1}\\
\mathbf{0} & \kappa_{1} & \kappa_{1}+\kappa_{2} & M
\end{array}
\right)
\end{align*}
and
\[
\nu\left(
\begin{array}
[c]{cccc}%
0 & 1 & 1 & \rho_{1}\\
1 & 1 & 1 & \rho_{2}\\
1 & 1 & 0 & \rho_{1}+\rho_{2}\\
\kappa_{1} & \kappa_{2} & \kappa_{1}+\kappa_{2} & M
\end{array}
\right)  =\nu\left(
\begin{array}
[c]{cccc}%
1 & 0 & 0 & \mathbf{0}\\
0 & 1 & 0 & \rho_{1}+\rho_{2}\\
0 & 0 & 1 & \rho_{1}\\
\mathbf{0} & \kappa_{1} & \kappa_{1}+\kappa_{2} & M
\end{array}
\right)
\]
for appropriate row vectors $\rho_{i}$, column vectors $\kappa_{i}$ and a
submatrix $M$. Consequently $\nu(\mathcal{A}(G)_{T})=\nu(\mathcal{A}%
(G)_{T\cup\{u,v\}})=\nu(\mathcal{A}(G)_{T\cup\{u\}})-1$; it follows that the
contributions of $T$, $T\cup\{u\}$ and $T\cup\{u,v\}$ to $\left\langle
G\right\rangle $ cancel each other.

Similarly, the contributions of $T$, $T\cup\{u\}$ and $T\cup\{u,v\}$ to
$\left\langle G^{\prime}\right\rangle $ all cancel each other.

The $GF(2)$-nullities of $\mathcal{A}(G)_{T\cup\{v\}}$ and $\mathcal{A}%
(G^{\prime})_{T\cup\{v\}}$ are
\begin{align*}
\nu\left(
\begin{array}
[c]{cccc}%
1 & 1 & 1 & \rho_{1}\\
1 & 1 & 1 & \rho_{2}\\
1 & 1 & 0 & \rho_{1}+\rho_{2}\\
\kappa_{1} & \kappa_{2} & \kappa_{1}+\kappa_{2} & M
\end{array}
\right)   & =\nu\left(
\begin{array}
[c]{cccc}%
1 & 0 & 0 & \mathbf{0}\\
0 & 0 & 0 & \rho_{1}+\rho_{2}\\
0 & 0 & 1 & \mathbf{0}\\
\mathbf{0} & \kappa_{1}+\kappa_{2} & 0 & M
\end{array}
\right)  \text{ \ and}\\
\nu\left(
\begin{array}
[c]{cccc}%
1 & 0 & 0 & \rho_{1}\\
0 & 1 & 0 & \rho_{2}\\
0 & 0 & 0 & \rho_{1}+\rho_{2}\\
\kappa_{1} & \kappa_{2} & \kappa_{1}+\kappa_{2} & M
\end{array}
\right)   & =\nu\left(
\begin{array}
[c]{cccc}%
0 & 1 & 0 & \mathbf{0}\\
1 & 0 & 0 & \mathbf{0}\\
0 & 0 & 0 & \rho_{1}+\rho_{2}\\
\mathbf{0} & \mathbf{0} & \kappa_{1}+\kappa_{2} & M
\end{array}
\right)
\end{align*}
respectively, so the contributions of $T\cup\{v\}$ to $\left\langle
G\right\rangle $ and $\left\langle G^{\prime}\right\rangle $ are equal.
Similarly, $T\cup\{w\}$ makes equal contributions to $\left\langle
G\right\rangle $ and $\left\langle G^{\prime}\right\rangle $, and so does
$T\cup\{u,v,w\}$ .

Also, the $GF(2)$-nullities of $\mathcal{A}(G)_{T\cup\{u,w\}}$ and
$\mathcal{A}(G^{\prime})_{T\cup\{v,w\}}$ are
\begin{align*}
\nu\left(
\begin{array}
[c]{cccc}%
0 & 1 & 1 & \rho_{1}\\
1 & 0 & 1 & \rho_{2}\\
1 & 1 & 1 & \rho_{1}+\rho_{2}\\
\kappa_{1} & \kappa_{2} & \kappa_{1}+\kappa_{2} & M
\end{array}
\right)   & =\nu\left(
\begin{array}
[c]{cccc}%
0 & 1 & 0 & \rho_{1}\\
1 & 0 & 0 & \rho_{2}\\
0 & 0 & 1 & \mathbf{0}\\
\kappa_{1} & \kappa_{2} & \mathbf{0} & M
\end{array}
\right)  \text{ \ and}\\
\nu\left(
\begin{array}
[c]{cccc}%
1 & 0 & 0 & \rho_{1}\\
0 & 1 & 0 & \rho_{2}\\
0 & 0 & 1 & \rho_{1}+\rho_{2}\\
\kappa_{1} & \kappa_{2} & \kappa_{1}+\kappa_{2} & M
\end{array}
\right)   & =\nu\left(
\begin{array}
[c]{cccc}%
0 & 0 & 1 & \rho_{1}\\
0 & 1 & 0 & \rho_{2}\\
1 & 0 & 0 & \mathbf{0}\\
\mathbf{0} & \kappa_{1} & \kappa_{2} & M
\end{array}
\right)
\end{align*}
respectively, so the contributions of $T\cup\{u,w\}$ to $\left\langle
G\right\rangle $ and $T\cup\{v,w\}$ to $\left\langle G^{\prime}\right\rangle $
are equal. Similarly, the contributions of $T\cup\{u,w\}$ to $\left\langle
G^{\prime}\right\rangle $ and $T\cup\{v,w\}$ to $\left\langle G\right\rangle $
are equal.
\end{proof}

\bigskip

\begin{definition}
\label{3move} A \emph{marked-graph}\textit{\ }$\Omega.3$ \emph{Reidemeister
move} is obtained by composing one of the moves described in Proposition
\ref{3movea} with any finite sequence of marked pivots and $\Omega.2$
Reidemeister moves. The inverse of an $\Omega.3$ move is also an $\Omega.3$
move, as is the move obtained by first toggling all loops in the original
graph, and then toggling all loops in the final graph.
\end{definition}

\bigskip

Theorem \ref{marked pivot}, Proposition \ref{3movea} and the results of
Section 8 imply that $\Omega.3$ moves preserve the reduced marked-graph bracket.

\bigskip

Including composition with $\Omega.2$ moves in Definition \ref{3move} is
suggested by \"{O}st-\allowbreak lund's observation that different-looking
$\Omega.3$ moves are interrelated through composition with $\Omega.2$ moves
\cite{O}. The unmarked $\Omega.3$ moves used in \cite{TZ} (see Figure
\ref{lbracf8a}) are interrelated in the same way, but composition with
$\Omega.2 $ moves was not useful in \cite{TZ} because the proof of the
invariance of the reduced graph bracket under $\Omega.3$ moves used induction
on the number of vertices, and $\Omega.2$ moves may increase the number of
vertices. Moreover the inductive argument used to verify invariance under one
$\Omega.3$ move actually required invariance under different $\Omega.3$ moves
on smaller graphs. The arguments we present here do not involve induction on
the number of vertices, so \"{O}stlund's observation is quite useful.%

\begin{figure}
[t]
\begin{center}
\includegraphics[
trim=1.081051in 4.169652in 0.000000in 0.774455in,
height=3.7671in,
width=4.6847in
]%
{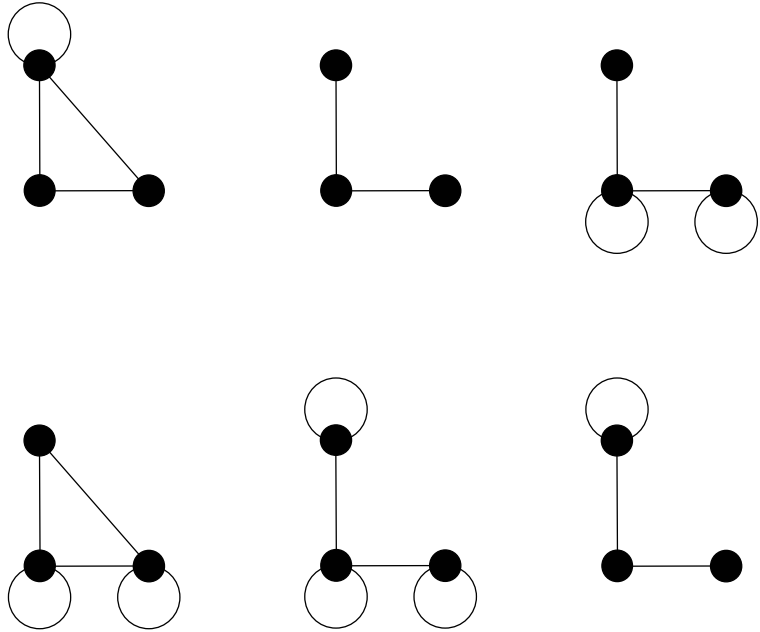}%
\caption{The examples of $\Omega.3$ moves mentioned in Corollary \ref{3moveb}
involve toggling\ the non-loop edges in one of the six pictured
configurations; every vertex outside the picture must have either 0 or
precisely 2 neighbors among the three unmarked vertices that are pictured.}%
\label{lbracf8a}%
\end{center}
\end{figure}

\begin{corollary}
\label{3moveb} Suppose $G$ is a marked graph with three unmarked vertices
$u,v,w$ such that the induced subgraph $G[\{u,v,w\}]$ is isomorphic to one of
the six 3-vertex graphs pictured in Figure \ref{lbracf8a}, and every vertex
$x\notin\{u,v,w\}$ is adjacent to either 0 or precisely two of $u,v,w$. Then
there is a marked-graph $\Omega.3$ Reidemeister move whose effect is to toggle
all the non-loop edges among $u,v,w$.
\end{corollary}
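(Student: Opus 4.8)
The plan is to reduce each of the six configurations in Figure \ref{lbracf8a} to the single configuration handled directly by Proposition \ref{3movea} (namely: $u$ looped, $v$ and $w$ unlooped, all three mutually adjacent), using marked pivots and $\Omega.2$ moves, and then invoke the definition of an $\Omega.3$ move. The key observation is that Proposition \ref{looptoggle} and Theorem \ref{marked pivot} already tell us that the reduced bracket is invariant under global loop-toggling and under marked pivots, and Definition \ref{3move} explicitly allows composing the basic move of Proposition \ref{3movea} with any finite sequence of marked pivots and $\Omega.2$ moves; so it suffices to exhibit, for each of the six cases, such a sequence that conjugates the target edge-toggle into the specific edge-toggle of Proposition \ref{3movea}.

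First I would sort the six pictured graphs $G[\{u,v,w\}]$ by their loop pattern and adjacency pattern. The move of Proposition \ref{3movea} toggles all three of $\{u,v\},\{u,w\},\{v,w\}$, so it interchanges the triangle (three edges, one loop) with the discrete graph on three vertices (no edges, one loop). Three of the six configurations should be exactly this pair together with its image under toggling every loop among $u,v,w$ — but global loop-toggling on the whole graph is sanctioned, and toggling loops only on $\{u,v,w\}$ can be arranged because outside vertices are adjacent to $0$ or $2$ of them, so I would check that a marked pivot among $u,v,w$ (all unmarked, so after one pivot two of them pick up marks — here I must be careful, since Proposition \ref{3movea} requires all three unmarked). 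The cleaner route: note that every one of the six 3-vertex graphs in Figure \ref{lbracf8a} is obtained from one of the two "Proposition \ref{3movea}-endpoints" (triangle-with-one-loop, or discrete-with-one-loop) by some combination of (i) toggling all loops and (ii) applying a sequence of marked pivots that fixes the external adjacency condition. The hypothesis that every external $x$ is adjacent to $0$ or $2$ of $u,v,w$ is precisely the condition that is preserved by pivots among $u,v,w$ and by the edge-toggle itself, so it propagates through the whole sequence.

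The step I expect to be the main obstacle is the bookkeeping of marks. A marked pivot $G_c^{vw}$ toggles the marks on $v$ and $w$, so after using pivots among $u,v,w$ to rearrange loops or adjacencies we generally do not land back in a configuration with all three of $u,v,w$ unmarked, which is what Proposition \ref{3movea} demands. The resolution is to observe that an $\Omega.3$ move as defined in Definition \ref{3move} permits marked pivots on \emph{all} of $G$, not just $u,v,w$, so any stray marks created on $u$ or $v$ can be pushed off of these vertices by further marked pivots (using some edge incident on the offending vertex and an external neighbor, exactly as in the proof of Proposition \ref{2moveb}) — or, if $u,v,w$ have become an isolated triangle with no external neighbors, the relevant induced subgraph is handled by a direct computation as in Proposition \ref{2moveb}. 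So the argument is: (1) for each of the six cases, write down an explicit short word in {global loop-toggle, marked pivot among $u,v,w$}; (2) verify it sends the target edge-toggle to the Proposition \ref{3movea} edge-toggle; (3) absorb any leftover marks via additional marked pivots, which are legal ingredients of an $\Omega.3$ move; (4) conclude by Definition \ref{3move}. No induction on vertices is needed, which is the point emphasized just before the corollary.
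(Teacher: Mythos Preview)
Your proposal diverges from the paper's argument and contains a genuine gap. The paper does \emph{not} use marked pivots among $u,v,w$ to pass between the six configurations. Instead it handles the top-left configuration directly via Proposition~\ref{3movea}, obtains the other two configurations in the top row by composing with $\Omega.2$ moves (the construction is imported verbatim from \cite{TZ}, and works here because the $\Omega.2$ move of Proposition~\ref{2movea} places no hypothesis whatsoever on marked vertices elsewhere in $G$), and then obtains the entire bottom row from the top row by the global loop-toggle that Definition~\ref{3move} explicitly permits.

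Your toolkit \{global loop-toggle, marked pivot among $u,v,w$\} cannot do the job. Recall that a marked pivot has no effect on loops, so it cannot alter the loop pattern on $\{u,v,w\}$ at all; and global loop-toggling changes every loop in $G$, not just those on $u,v,w$. Hence these two operations together do not let you move between, say, two top-row configurations that differ in their edge pattern but share the same loop count. You need something that actually changes the induced subgraph on $\{u,v,w\}$ in a controlled way, and that is precisely what the $\Omega.2$ compositions supply: one inserts two auxiliary vertices, performs the basic $\Omega.3$ move of Proposition~\ref{3movea} on a shifted triple, and then removes two vertices, landing on the desired toggled configuration. Your step~(3), pushing stray marks off $u,v,w$ by marked pivots with external neighbours, is also unreliable: such a pivot does far more than relocate a mark (it interchanges neighbour sets and toggles many adjacencies), and in the isolated-triangle case there is no external neighbour to pivot with. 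The paper sidesteps all of this by never creating stray marks on $u,v,w$ in the first place.
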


\begin{proof}
The instance of the proposition involving the 3-vertex graph that appears at
the top left of Figure \ref{lbracf8a} is verified in Proposition \ref{3movea}.
As was observed in \cite{TZ}, the other two instances that appear in the top
row of Figure \ref{lbracf8a} can be obtained from this one through composition
with $\Omega.2$ moves. (The same constructions given in \cite{TZ} apply here,
because the $\Omega.2$ moves of Proposition \ref{2movea} coincide locally with
the unmarked $\Omega.2$ moves of \cite{TZ}, and do not require any special
assumptions regarding the marked vertices of $G$.) The three instances
pictured in the bottom row of Figure \ref{lbracf8a} follow, for if $G$
contains such an induced subgraph then we may toggle all loops in $G$, apply
the corresponding move pictured in the top row of Figure \ref{lbracf8a}, and
then toggle all loops in the resulting graph.
\end{proof}

\bigskip

N.B. Corollary \ref{3moveb} states only that the moves illustrated in Figure
\ref{lbracf8a} are examples of marked-graph $\Omega.3$ moves. As Definition
\ref{3move} includes composition with $\Omega.2$ moves and marked pivots, it
is not possible to give an exhaustive list of $\Omega.3$ moves.

\bigskip

If $D$ is a link diagram then an\textit{\ }$\Omega.3$ Reidemeister move
involves replacing a configuration like one of those in Figure \ref{lbracfi9}
with a configuration like the other. The arcs appearing in the figure may be
oriented in any way, and one may replace both configurations by their mirror images.%

\begin{figure}
[h]
\begin{center}
\includegraphics[
trim=0.936216in 6.836980in 0.000000in 0.774644in,
height=2.3436in,
width=5.5132in
]%
{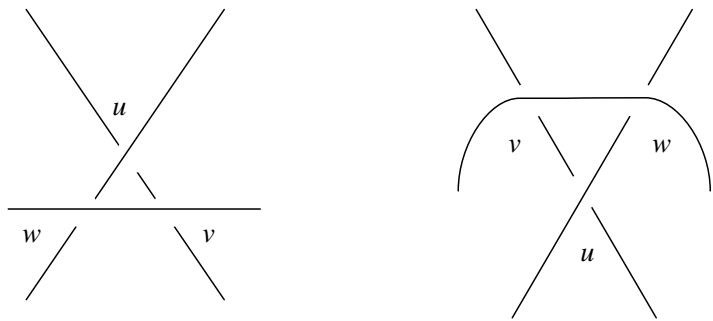}%
\caption{An $\Omega.3$ move on a link diagram.}%
\label{lbracfi9}%
\end{center}
\end{figure}

\begin{proposition}
If $D^{\prime}$ is obtained by applying an\textit{\ }$\Omega.3$ Reidemeister
move to a link diagram $D$ then there are Euler systems $C$ and $C^{\prime}$
such that $\mathcal{L}(D^{\prime},C^{\prime})$ is obtained from $\mathcal{L}%
(D,C)$ by applying a marked-graph $\Omega.3$ move or a finite (possibly empty)
sequence of marked pivots.
\end{proposition}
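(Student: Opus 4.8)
The plan is to follow the template of Proposition \ref{2move}: choose the Euler systems so that the three crossings involved in the $\Omega.3$ move appear as \emph{unmarked} vertices, and then invoke Corollary \ref{3moveb}. Recall that an $\Omega.3$ move takes place inside a disk $\delta$ that meets $D$ in three arcs, pairwise crossing at three crossings $u,v,w$, and the move slides one arc across the crossing formed by the other two, producing $D'$ which agrees with $D$ outside $\delta$. Fix a directed Euler system $C$ for $\vec{U}$. The first step is to record, for any $C$ that happens to be unmarked at all of $u,v,w$, the three geometric facts that make the argument work; these are the facts already used in the knot case of \cite{TZ}. First, with such a $C$ the restriction of $C$ to $\delta$ simply follows the three arcs as drawn, so every crossing $x\notin\{u,v,w\}$ of $D$ is interlaced in $C$ with an even number of $u,v,w$; that is, every vertex of $\mathcal{L}(D,C)$ outside $\{u,v,w\}$ is adjacent to $0$ or precisely $2$ of them. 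Second, the induced subgraph $\mathcal{L}(D,C)[\{u,v,w\}]$ --- determined by which pairs of arcs are interlaced in $\delta$ and by which of $u,v,w$ are negative crossings --- is isomorphic to one of the six graphs of Figure \ref{lbracf8a}, or else has no non-loop edge among $u,v,w$, in which case the move will not disturb the interlacement data. Third, performing the $\Omega.3$ move, with $C$ rerouted through the modified region in the evident way to give an Euler system $C'$ for $\vec{U}'$, has exactly the effect of toggling the non-loop edges among $u,v,w$; this is the combinatorial content already underlying the statement of Proposition \ref{3movea}.

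The second step is to reduce to the situation in which the chosen $C$ is unmarked at all three of $u,v,w$. As in the proof of Proposition \ref{2move} one works through the mark-configurations at $u,v,w$ that are compatible with $C$ being a directed Euler system; configurations that would force a closed curve confined to $\delta$ do not arise, since $C$ is an Euler system. In every remaining configuration, the relevant pairs among $u,v,w$ are interlaced with respect to $C$, or become interlaced after a single $\kappa$-transform, so by Lemma \ref{transform} a short sequence of transpositions $C\mapsto C\ast p\ast q\ast p$ --- each realized on the graph side as a marked pivot, which toggles the marks on $p$ and $q$ --- together with at most one $\kappa$-transform followed by a compensating transposition, produces an Euler system $C^{\ast}$ for $\vec{U}$ with $u,v,w$ all unmarked in $\mathcal{L}(D,C^{\ast})$. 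Since every one of these operations is a marked pivot, $\mathcal{L}(D,C^{\ast})$ is obtained from $\mathcal{L}(D,C)$ by a finite sequence of marked pivots, and the three facts of the first step now apply to $C^{\ast}$.

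The third step is immediate. If the move does not alter the interlacement data --- the degenerate subcase above --- then $D'$ has an Euler system $C'$ with $\mathcal{L}(D',C')=\mathcal{L}(D,C^{\ast})$, and $\mathcal{L}(D',C')$ is obtained from $\mathcal{L}(D,C)$ by marked pivots alone. Otherwise the first fact supplies the hypothesis of Corollary \ref{3moveb} for the three unmarked vertices $u,v,w$ of $\mathcal{L}(D,C^{\ast})$, so toggling the non-loop edges among them is a marked-graph $\Omega.3$ move; by the third fact the resulting marked graph is $\mathcal{L}(D',C')$ for the Euler system $C'$ of $\vec{U}'$ obtained from $C^{\ast}$ by rerouting through the modified region. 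Composing the marked pivots of the second step with this $\Omega.3$ move --- which Definition \ref{3move} permits --- carries $\mathcal{L}(D,C)$ to $\mathcal{L}(D',C')$, as required; sign patterns in which all three loops would have to be toggled are handled, via Proposition \ref{looptoggle}, by toggling all loops at both ends.

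The main obstacle is the case analysis of the second step: one must verify, for each planar form and orientation of the $\Omega.3$ move and each arrangement of marks on $u,v,w$ compatible with $C$ being an Euler system, that the three crossings can be made simultaneously unmarked by marked pivots, and must identify the configurations that simply cannot arise --- the analogue of the ``extra circuit'' exclusion in Proposition \ref{2move}. This bookkeeping is eased by two reductions: by \"{O}stlund's observation (built into Definition \ref{3move}) different planar forms of the $\Omega.3$ move differ by $\Omega.2$ moves, so it suffices to treat one convenient form, and by Proposition \ref{looptoggle} the loop-reversed sign patterns reduce to the others.
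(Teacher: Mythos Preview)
Your outline has the right architecture, but the second step contains a real gap. You assert that for every mark-configuration on $u,v,w$ compatible with $C$ being a directed Euler system, a short sequence of transpositions will leave all three unmarked. This is false. A transposition $C\mapsto C\ast p\ast q\ast p$ toggles the marks on \emph{both} $p$ and $q$, so transpositions among $\{u,v,w\}$ preserve the parity of the number of marked vertices in that set; the only way to remove a single mark is to transpose with an interlaced vertex outside $\{u,v,w\}$. The paper's case analysis shows that there are configurations with exactly one marked vertex in $\{u,v,w\}$ that has \emph{no} neighbor outside $\{u,v,w\}$, and configurations with two marked vertices neither of which has an outside neighbor. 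In those cases you cannot reach the unmarked situation by marked pivots at all. (Your phrase ``at most one $\kappa$-transform followed by a compensating transposition'' does not help: a single $\kappa$-transform destroys the directed Euler system, and Lemma~\ref{transform} does not apply to it.)

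The paper handles precisely these stuck configurations by a different mechanism, and this is where the clause ``or a finite (possibly empty) sequence of marked pivots'' in the statement actually earns its keep. When one vertex is marked with no outside neighbor, the paper writes out the incident Euler circuit explicitly and checks that the obvious $C'$ on $D'$ gives $\mathcal{L}(D',C')$ literally equal to $\mathcal{L}(D,C)$; when two are marked with no outside neighbors, it checks that $\mathcal{L}(D',C')=\mathcal{L}(D,C)_c^{xy}$ for suitable $x,y\in\{u,v,w\}$. Your ``degenerate subcase'' (no non-loop edges among $u,v,w$ in the unmarked picture) is not the same phenomenon and does not cover these cases. To repair your argument you must either carry out this residual analysis, or give an independent reason why the stuck configurations force $\mathcal{L}(D,C)$ and $\mathcal{L}(D',C')$ to differ only by marked pivots.
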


\begin{proof}
Let $C$ be an Euler system for the directed universe $\vec{U}$ of the diagram
$D$ which contains the left-hand side of Figure \ref{lbracfi9}, and let
$D^{\prime}$ be the link diagram that differs from $D$ only by containing the
right-hand side of Figure \ref{lbracfi9}.

Suppose first that the crossings are all unmarked.

If the horizontal arc in the left-hand portion of Figure \ref{lbracfi9} is
oriented from left to right and the other two arcs are oriented from top to
bottom, then the fact that the circuit of $C$ that passes through this portion
of $D$ is an Euler circuit implies that it is either $uvC_{1}uwC_{2}vwC_{3}$
or $uvC_{1}vwC_{2}uwC_{3}$; the Euler circuit in the right-hand portion of
Figure \ref{lbracfi9} that arises naturally from $C$ will then be
$vuC_{1}wuC_{2}wvC_{3}$ or $vuC_{1}wvC_{2}wuC_{3}$, respectively. All three
crossings are positive, and each crossing outside the figure is interlaced
with either 0 or precisely 2 of these; for instance a crossing that appears
once in $C_{1}$ and once in $C_{2}$ is interlaced with $u$ and $w$ in the
first case, and $v$ and $w$ in the second case. Moreover each crossing in the
left-hand portion of the figure is interlaced with the same outside crossings
as the crossing on the right that has the same label ($u$, $v$ or $w$). It
follows that the pictured $\Omega.3$ move is the one that appears in the
middle of the top row of Figure \ref{lbracf8a}.

If Figure \ref{lbracfi9} is replaced by its mirror image and the orientations
are the same as in the preceding paragraph, the same argument holds with all
loops toggled.

According to Lemma 3 of \cite{O}, every other $\Omega.3$ move represented by
some oriented version of Figure \ref{lbracfi9} or its mirror image is a
composition of $\Omega.2$ moves and one of the two moves just discussed. The
result then follows from the two cases just discussed and Proposition
\ref{2move}.

Suppose now that the portion of $D$ pictured on the left-hand side of Figure
\ref{lbracfi9} has a single marked vertex $x\in\{u,v,w\}$. If $x$ is
interlaced with any crossing $y$ outside the pictured portion of $D$, then we
may replace $C$ with $C\ast x\ast y\ast x$, which has no marked vertices in
the pictured portion of $D$. It remains to consider the possibility that
precisely one of $u,v,w$ is marked and the marked element of $\{u,v,w\}$ is
not interlaced with any crossing outside $\{u,v,w\}$. Once again there are
several special cases, but we need only discuss one explicitly.

For instance, suppose that the left-hand side is oriented as in the first
paragraph of the proof, $v$ is marked and $u,w$ are not. The circuit of $C$
incident on the left-hand side of Figure \ref{lbracfi9} is $uvwC_{1}%
vC_{2}uwC_{3}$ or $vC_{1}uvwC_{3}uwC_{2}$; the obvious Euler system for the
right-hand side of Figure \ref{lbracfi9} then includes the incident circuit
$vC_{1}wvuC_{2}wuC_{3}$ or $vC_{3}wuC_{2}wvuC_{1}$, respectively. The
assumption that $v$ has no neighbor outside $\{u,w\}$ implies that no crossing
appears precisely once within $C_{1}$; it follows that $\mathcal{L}(D^{\prime
},C^{\prime})$ is actually identical to $\mathcal{L}(D,C)$. If only $w$ is
marked then the circuit of $C$ incident on the left-hand side of Figure
\ref{lbracfi9} is $uwC_{1}vwC_{2}uvC_{3}$ or $vwC_{1}uwC_{2}uvC_{3}$, and the
obvious Euler system for the right-hand side of Figure \ref{lbracfi9} includes
the incident circuit $wvC_{1}wuC_{2}vuC_{3}$ or $wuC_{1}wvC_{2}vuC_{3}$,
respectively. The assumption that $w$ has no neighbor outside $\{u,v\}$
implies that no crossing appears precisely once within $C_{1}$; it follows
again that $\mathcal{L}(D^{\prime},C^{\prime})$ is identical to $\mathcal{L}%
(D,C)$. Finally, if $u$ is the lone marked vertex in the left-hand portion of
Figure \ref{lbracfi9} then the incident circuit of $C$ is either
$uwC_{1}uvC_{2}vwC_{3}$ (in which $u$ is interlaced with $w$) or
$uvC_{1}uwC_{2}vwC_{3}$ (in which $u$ is interlaced with $v$). If $u$ is
interlaced with $v$ then $C\ast u\ast v\ast u$ will have $v$ marked instead of
$u$, and if $u$ is interlaced with $w$ then $C\ast u\ast w\ast u$ will have
$w$ marked instead of $u$; either way a transposition leads to one of the
first two cases.

As before, the mirror image is handled by toggling all loops, and other
oriented versions of the left-hand side of Figure \ref{lbracfi9} (or its
mirror image) with precisely one marked vertex are handled by expressing the
corresponding moves as compositions of $\Omega.2$ moves with one of the two
$\Omega.3$ moves already considered.

Suppose now that the portion of $D$ pictured on the left-hand side of Figure
\ref{lbracfi9} includes precisely two marked vertices. If a marked crossing
$x\in\{u,v,w\}$ is interlaced with any crossing $y$ outside the pictured
portion of $D$, then we may replace $C$ with $C\ast x\ast y\ast x$, which has
only one marked vertex in the pictured portion of $D$; hence we may assume no
marked crossing among $\{u,v,w\}$ is interlaced with any crossing outside
$\{u,v,w\}$. The hypothesis that every crossing outside $\{u,v,w\}$ is
interlaced with an even number of elements of $\{u,v,w\}$ then allows us to
assume that no crossing outside $\{u,v,w\}$ is interlaced with any of $u,v,w$.
Moreover, if the two marked crossings are interlaced then the marks are
removed by replacing $C$ with the Euler system obtained by a transposition on
these two vertices, so we may proceed assuming they are not interlaced.

Suppose again that the left-hand side of Figure \ref{lbracfi9} is oriented as
above. If $u,v$ are the two crossings that are marked then the incident Euler
circuit of $C$ must be $uwC_{1}uvwC_{2}vC_{3}$. There is then an Euler circuit
$vuC_{1}wvC_{2}wuC_{3}$ incident on the right-hand portion of Figure
\ref{lbracfi9}; let $C^{\prime}$ be the Euler system that coincides with $C$
except for this replacement of circuits incident on the pictured portions of
$D$ and $D^{\prime}$. The assumption that no outside crossing is interlaced
with any of $u,v,w$ with respect to $C$ tells us that every outside crossing
appears twice in one $C_{i}$; hence no outside crossing is interlaced with any
of $u,v,w$ with respect to $C^{\prime}$ either. Considering $uwC_{1}%
uvwC_{2}vC_{3}$, we see that $\mathcal{L}(D,C)$ has a connected component
whose only vertices are $u$, $v$ and $w$; $u$ and $v$ are marked, nonadjacent
neighbors of $w$. Considering $vuC_{1}wvC_{2}wuC_{3}$, we see that
$\mathcal{L}(D^{\prime},C^{\prime})$ has a connected component whose only
vertices are $u$, $v$ and $w$; $u$ and $w$ are marked, nonadjacent neighbors
of $v$. That is, $\mathcal{L}(D^{\prime},C^{\prime})=\mathcal{L}(D,C)_{c}%
^{vw}$. If $u$ and $w$ are the two that are marked then the Euler circuit of
$C$ incident of the left-hand portion of Figure \ref{lbracfi9} must be
$uvC_{1}uwC_{2}vwC_{3}$; there is then an Euler circuit $wvuC_{1}vC_{2}%
wuC_{3}$ incident on the right-hand portion. If $C^{\prime}$ is the Euler
system that differs from $C$ only by including this latter circuit then
$\mathcal{L}(D^{\prime},C^{\prime})=\mathcal{L}(D,C)_{c}^{uv}$. Finally, if
$v$ and $w$ are the two that are marked then the Euler circuit of $C$ incident
on the left-hand portion of Figure \ref{lbracfi9} must be $uwC_{1}%
vC_{2}uvwC_{3}$; hence an Euler circuit incident on the right-hand portion is
$wvC_{1}wuC_{2}vuC_{3}$. If $C^{\prime}$ is the Euler system that differs from
$C$ only by including this latter circuit then $\mathcal{L}(D^{\prime
},C^{\prime})=\mathcal{L}(D,C)_{c}^{uv}$.

Finally, suppose all three of $u,v,w$ are marked with respect to $C$. Any
adjacency involving two of them allows a marked pivot that will eliminate two
of the three marks, so we may assume no two of $u,v,w$ are interlaced with
respect to $C$. This is not possible.
\end{proof}

\bigskip

\begin{corollary}
Suppose $D$ and $D^{\prime}$ are diagrams of the same oriented virtual link
type, with directed universes $\vec{U}$ and $\vec{U}^{\prime}$. Let $C$ and
$C^{\prime}$ be directed Euler systems for $\vec{U}$ and $\vec{U}^{\prime}$.
Then $\mathcal{L}(D,C)$ and $\mathcal{L}(D^{\prime},C^{\prime})$ can be
transformed into each other through some finite sequence of marked pivots,
marked-graph $\Omega.2$ and $\Omega.3$ Reidemeister moves, and adjunctions and
deletions of isolated, unmarked vertices.
\end{corollary}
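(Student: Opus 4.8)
The plan is to combine the move-by-move results of the preceding sections with the standard theorem (see \cite{Kv}) that two diagrams $D$ and $D'$ represent the same oriented virtual link if and only if they are connected by a finite chain $D=D_{0},D_{1},\dots,D_{k}=D'$ in which each $D_{i+1}$ is obtained from $D_{i}$ by a single Reidemeister move: a classical $\Omega.1$, $\Omega.2$ or $\Omega.3$ move, a purely virtual move, or a mixed (semivirtual) move. I would call two marked graphs \emph{equivalent}, written $G\sim G'$, if one is obtained from the other by a finite sequence of marked pivots, marked-graph $\Omega.2$ and $\Omega.3$ Reidemeister moves, and adjunctions and deletions of isolated, unmarked vertices (allowing such a vertex to carry a loop), and then reduce the corollary to two assertions: first, that the $\sim$-class of $\mathcal{L}(D,C)$ does not depend on the directed Euler system $C$, so that each diagram $D$ acquires a well-defined class $[\![D]\!]$; and second, that $[\![D_{i}]\!]=[\![D_{i+1}]\!]$ for every single Reidemeister move $D_{i}\to D_{i+1}$. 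Since $\mathcal{L}(D,C)$ and $\mathcal{L}(D',C')$ both lie in the common class $[\![D]\!]=[\![D']\!]$, the corollary then follows.

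For the first assertion I would argue as follows. If $C$ and $C'$ are directed Euler systems for one $2$-in, $2$-out digraph $\vec{U}$, then orienting the circuits of $C$ to agree with the edge-directions of $\vec{U}$ shows that $C'$ uses at every vertex one of the two transitions compatible with those edge-directions, and hence is never incompatible with $C$; by the second assertion of Proposition~\ref{ktransform} they are joined by a finite sequence of transpositions, each intermediate system being again a directed Euler system of $\vec{U}$ and each transposition acting on a pair of vertices interlaced in---hence adjacent in---the current looped interlacement graph. Lemma~\ref{transform} then says each transposition alters the looped interlacement graph by a marked pivot, so $\mathcal{L}(D,C)\sim\mathcal{L}(D,C')$ and $[\![D]\!]$ is well defined.

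For the second assertion I would treat the move types in turn. A purely virtual move or a mixed move leaves the abstract universe, its directed structure, and the link components untouched, so $\mathcal{L}(D_{i},C)=\mathcal{L}(D_{i+1},C)$ for a common directed Euler system $C$. For an $\Omega.1$ move, the new crossing $x$ has a loop in $U$ whose interior meets no other crossing, so every directed Euler circuit through the relevant component runs straight around that loop; hence $x$ is unmarked and interlaced with nothing, and for directed Euler systems $C,C'$ differing only by the detour around the kink, $\mathcal{L}(D_{i+1},C')$ is $\mathcal{L}(D_{i},C)$ with an isolated unmarked vertex adjoined, that vertex carrying a loop exactly when the kink is negative. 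For an $\Omega.2$ move I would quote Proposition~\ref{2move}, and for an $\Omega.3$ move the proposition immediately above the corollary; each furnishes directed Euler systems $C,C'$ for which $\mathcal{L}(D_{i+1},C')$ is obtained from $\mathcal{L}(D_{i},C)$ by a marked-graph $\Omega.2$ move, respectively a marked-graph $\Omega.3$ move or a finite sequence of marked pivots. In every case $[\![D_{i}]\!]=[\![D_{i+1}]\!]$.

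The content is mostly assembly; the one point demanding care is the bookkeeping of Euler systems, since Proposition~\ref{2move} and the $\Omega.3$ proposition each produce their own convenient directed Euler systems rather than the ones carried in from the previous step. I would handle this exactly as above, by phrasing everything in terms of the $\sim$-class $[\![D]\!]$, so that all such discrepancies are absorbed by the independence statement of the second paragraph. The only Reidemeister move not already packaged as a proposition is $\Omega.1$, so the main obstacle---such as it is---is the modest verification that the crossing it introduces is genuinely isolated and unmarked in the interlacement graph, making the corresponding graph operation precisely the adjunction of an isolated unmarked vertex (looped or not according to the sign of the kink), in accordance with the operations permitted in the statement.
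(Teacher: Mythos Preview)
Your proposal is correct and follows essentially the same approach as the paper's own proof, which is much terser: it simply notes that $D$ and $D'$ are connected by classical $\Omega.1$, $\Omega.2$, $\Omega.3$ and virtual Reidemeister moves, that $\Omega.1$ moves correspond to adjunctions and deletions of isolated unmarked vertices, and that virtual moves have no effect on $\mathcal{L}(D,C)$, leaving the $\Omega.2$ and $\Omega.3$ cases to the immediately preceding propositions. Your version is more explicit about the Euler-system bookkeeping (packaging it via the equivalence class $[\![D]\!]$ and invoking Proposition~\ref{ktransform} and Lemma~\ref{transform}) and about why the $\Omega.1$ crossing is forced to be unmarked and isolated, but these are elaborations of the same argument rather than a different route.
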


\begin{proof}
$D$ and $D^{\prime}$ can be transformed into each other through some finite
sequence of Reidemeister moves of types $\Omega.1$, $\Omega.2$ and $\Omega.3$
and virtual Reidemeister moves. $\Omega.1$ moves correspond to adjunctions and
deletions of isolated, unmarked vertices. Virtual Reidemeister moves have no
effect on $\mathcal{L}(D,C)$.
\end{proof}

\section{The Jones polynomial}

\begin{definition}
A \emph{Reidemeister move} on a marked graph is one of the following.

$\Omega.1.$ An isolated, unmarked vertex may be adjoined or deleted. The
vertex may be looped or unlooped.

$\Omega.2.$ One of the marked-graph $\Omega.2$ Reidemeister moves of Section 8
may be applied.

$\Omega.3.$ One of the marked-graph $\Omega.3$ Reidemeister moves of Section 9
may be applied.
\end{definition}

\begin{definition}
Let $G$ be a marked graph with $n$ vertices, $\ell$ of which are looped. Then
the \emph{Jones polynomial} of $G$ is
\[
V_{G}(t)=(-1)^{n}\cdot t^{(3n-6\ell)/4}\cdot\left\langle G\right\rangle
(t^{-1/4}).
\]

\end{definition}

If $G$ has no free loops and no marked vertices then the bracket, reduced
bracket and Jones polynomial of $G$ coincide with those discussed in
\cite{TZ}. Much of the discussion of \cite{TZ} is therefore still relevant. In
particular, several important aspects of the classical theory of the Jones
polynomial do not extend to this context; for instance, we cannot expect to
calculate $V_{G}(t)$ using a recursive algorithm that reduces $G$ to
\textquotedblleft ungraphs\textquotedblright\ in the same way that the Jones
polynomial of a classical link can be calculated by repeatedly using Theorem
12 of \cite{Jo} (the correct version of which is the formula $t^{-1}V_{L^{+}%
}-tV_{L^{-}}=(t^{1/2}-t^{-1/2})V_{L}$) to reduce $L$ to unlinks.

\bigskip

However the fundamental property that the Jones polynomial is a link type
invariant does survive the generalization to marked graphs.

\begin{theorem}
If $G$ is a marked graph and $G^{\prime}$ is obtained from $G$ through marked
pivots and Reidemeister moves then $V_{G}=V_{G^{\prime}}$.
\end{theorem}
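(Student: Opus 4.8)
The plan is to show that each of the two types of allowed modifications — a single marked pivot, or a single Reidemeister move $\Omega.1$, $\Omega.2$, $\Omega.3$ — preserves $V_G$, and then to invoke the obvious fact that $V_{G^{\prime}}=V_G$ whenever $G^{\prime}$ is reached from $G$ by a finite composition of operations each of which preserves $V_G$. So the theorem reduces to four separate invariance checks, each involving only the definition $V_G(t)=(-1)^n\,t^{(3n-6\ell)/4}\,\langle G\rangle(t^{-1/4})$ together with the reduced-bracket facts already proved in Sections 5, 8 and 9.

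First I would dispatch the marked pivot. By Theorem \ref{marked pivot} a marked pivot $G\mapsto G_c^{vw}$ leaves $[G]$ unchanged, hence leaves $\langle G\rangle$ unchanged. Since a marked pivot also leaves the number of vertices $n$ unchanged and has no effect on loops (as remarked right after Definition \ref{pivot}), it leaves $\ell$ unchanged, so the normalizing prefactor $(-1)^n t^{(3n-6\ell)/4}$ is unchanged as well; therefore $V_{G_c^{vw}}=V_G$. Next, the $\Omega.2$ and $\Omega.3$ cases: the text already records that $\Omega.2$ moves preserve the reduced bracket (the sentence following the definition of the marked-graph $\Omega.2$ move, using Proposition \ref{looptoggle} and Theorem \ref{marked pivot} together with Propositions \ref{2movea}--\ref{2movec}), and that $\Omega.3$ moves preserve the reduced bracket (the sentence following Definition \ref{3move}, citing Theorem \ref{marked pivot}, Proposition \ref{3movea} and the $\Omega.2$ results). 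So for these two moves I only need to verify that the prefactor $(-1)^n t^{(3n-6\ell)/4}$ is also preserved. An $\Omega.3$ move of the basic type in Proposition \ref{3movea} changes no vertices and no loops at all (it only toggles the three edges among $u,v,w$), and the general $\Omega.3$ move is a composition of such a move with marked pivots and $\Omega.2$ moves; so it suffices to check the prefactor for $\Omega.2$. An $\Omega.2$ move adjoins or removes two vertices $v,w$, one looped and one unlooped (this is exactly the common feature of the three configurations in Propositions \ref{2movea}, \ref{2moveb}, \ref{2movec}, and the loop-toggled versions preserve this property); thus $\Delta n=\pm2$ and $\Delta\ell=\pm1$ simultaneously, so $(-1)^n$ picks up $(-1)^{\pm2}=1$ and the exponent picks up $(3(\pm2)-6(\pm1))/4=0$. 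Hence the prefactor is unchanged, and since $\langle G\rangle$ is unchanged, $V_G$ is unchanged. (The possible adjunction of a free loop in Proposition \ref{2moveb} does not affect $n$ or $\ell$, and its effect on $[G]$ is already absorbed into the reduced-bracket invariance statement for $\Omega.2$ moves.)

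It remains to treat $\Omega.1$: adjoining or deleting an isolated, possibly looped, unmarked vertex $v$. Here the key computation is the effect on the full bracket. If $v$ is isolated and unmarked, Definition \ref{adjT} gives, for each $T\subseteq V(G)$, a block decomposition of $\mathcal A(G)_T$ in which $v$ contributes either a $1\times1$ block $(1)$ (nullity $0$) or is deleted entirely (if its diagonal entry in $\mathcal A(G)+\Delta_T$ is $0$), according to whether $v$ is looped and whether $v\in T$; summing the $A^{n-|T|}B^{|T|}$ weights over the two choices for $v$ shows that adjoining an \emph{unlooped} isolated unmarked vertex multiplies $[G]$ by $(Ad+B)$, and adjoining a \emph{looped} one multiplies $[G]$ by $(A+Bd)$ — exactly the factors appearing in part (f) of Theorem \ref{recursion}. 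Substituting $B\mapsto A^{-1}$, $d\mapsto -A^2-A^{-2}$ gives in the unlooped case $A(-A^2-A^{-2})+A^{-1}=-A^3$, and in the looped case $A+A^{-1}(-A^2-A^{-2})=-A^{-3}$; so $\langle G\rangle$ is multiplied by $-A^3$ (unlooped) or $-A^{-3}$ (looped). Meanwhile $n$ increases by $1$, and $\ell$ increases by $0$ (unlooped) or $1$ (looped), so the prefactor $(-1)^n t^{(3n-6\ell)/4}$ is multiplied by $(-1)\cdot t^{3/4}$ (unlooped) or $(-1)\cdot t^{(3-6)/4}=(-1)t^{-3/4}$ (looped). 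Evaluating $\langle G\rangle$ at $A=t^{-1/4}$, the factor $-A^{\pm3}$ becomes $-t^{\mp3/4}$, which cancels the prefactor change exactly. Hence $V_G$ is unchanged by an $\Omega.1$ move, and the same holds for its inverse.

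I do not expect any single step to be a genuine obstacle, since the reduced-bracket invariances for $\Omega.2$ and $\Omega.3$ are already in hand and the marked-pivot invariance is Theorem \ref{marked pivot}; the only real content is the bookkeeping verification that the normalization prefactor $(-1)^n t^{(3n-6\ell)/4}$ transforms compatibly with $\langle G\rangle$ under each move. The mild subtlety to watch is the $\Omega.1$ case, where one must track the exponent of $t$ and the sign together across the unlooped/looped dichotomy and confirm the cancellation; and for $\Omega.2$ one must record that the two vertices created or destroyed are always one looped and one unlooped, so that $\Delta n=\pm2$ and $\Delta\ell=\pm1$ in lockstep — this is precisely what makes the exponent change $(3\cdot(\pm2)-6\cdot(\pm1))/4$ vanish. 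Once these elementary checks are assembled, the theorem follows by composing invariances.
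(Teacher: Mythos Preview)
Your proposal is correct and follows essentially the same approach as the paper: both arguments verify invariance move-by-move, handling the marked pivot and $\Omega.3$ case by noting that $n$, $\ell$, and $\langle G\rangle$ are all preserved, handling $\Omega.2$ by checking $\Delta n=\pm2$, $\Delta\ell=\pm1$ so the prefactor is unchanged, and handling $\Omega.1$ by the explicit computation $[G\cup E_1]=(Ad+B)[G]$ (resp.\ $(A+Bd)[G]$) and the resulting cancellation with the prefactor change. If anything, you are slightly more careful than the paper in observing that the \emph{general} $\Omega.3$ move of Definition~\ref{3move} is a composite involving $\Omega.2$ moves and marked pivots, so that ``$n$ and $\ell$ unchanged'' really only applies to the basic Proposition~\ref{3movea} step and the full invariance follows once $\Omega.2$ and marked-pivot invariance are in hand.
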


\begin{proof}
A marked pivot or $\Omega.3$ move does not affect $n$, $\ell$ or $\left\langle
G\right\rangle $.

If $E_{1}$ consists of one isolated, unmarked, unlooped vertex then the
disjoint union $G\cup E_{1}$ has $[G\cup E_{1}]=(Ad+B)\cdot\lbrack G]$, so
$\left\langle G\cup E_{1}\right\rangle =(-A^{3})\cdot\left\langle
G\right\rangle $ and $V_{G\cup E_{1}}=-(-1)^{n}t^{3/4}\cdot t^{(3n-6\ell
)/4}\cdot(-t^{-3/4})\left\langle G\right\rangle (t^{-1/4})=V_{G}$, where $n$
and $\ell$ refer to $G$. A similar calculation shows that adjoining a single
isolated, unmarked, looped vertex does not change $V$.

An $\Omega.2$ move has no effect on $\left\langle G\right\rangle $. It adds
$2$ or $-2$ to the number of vertices while adding $1$ or $-1$ (resp.) to the
number of loops, so it also has no effect on $(-1)^{n}\cdot t^{(3n-6\ell)/4}$.
\end{proof}

\bigskip

In closing we observe that a special feature of the marked-graph bracket is
that both definitions (the nullity formula and the recursion) are
``local''\ in the sense that information from different connected components
of $G$ is processed separately. Consequently there are multivariable forms of
the marked-graph bracket and Jones polynomials, in which different connected
components of $G$ are associated with different sets of variables. For
classical knots, disconnected interlacement graphs arise from connected sums,
which are well understood \cite{Sc}; on the other hand, the connected sum of
virtual knot types is not uniquely defined \cite{KM, KS}. Whether there is any
useful application of the multivariable bracket and Jones polynomials remains
to be seen.

\bigskip

\bigskip

\end{document}